\documentclass[11pt]{article}
\usepackage{amsmath} \usepackage{amssymb} \usepackage{latexsym} \usepackage{enumitem} \usepackage{mathrsfs} \usepackage{comment}
\usepackage{color}
\usepackage{accents}
\usepackage{bbm}

\usepackage[colorlinks=true,urlcolor=blue,
citecolor=red,linkcolor=blue,linktocpage,pdfpagelabels,bookmarksnumbered,bookmarksopen]{hyperref}

\newtheorem{assumption}{Assumption}

\def\qed{ \ \vrule width.2cm height.2cm depth0cm\smallskip}
\newenvironment{proof}{\noindent {\bf Proof.\/}}{$\qed$\vskip 0.1in}

\newcommand{\ba}{\begin{array}}
\newcommand{\ea}{\end{array}}
\newcommand{\be}{\begin{equation}}
\newcommand{\ee}{\end{equation}}
\newcommand{\bea}{\begin{eqnarray}}
\newcommand{\eea}{\end{eqnarray}}
\newcommand{\beaa}{\begin{eqnarray*}}
\newcommand{\eeaa}{\end{eqnarray*}}

\def\dbE{\mathbb{E}}
\def\dbF{\mathbb{F}}

\def\dbL{\mathbb{L}}
\def\dbM{\mathbb{M}}
\def\dbN{\mathbb{N}}
\def\dbP{\mathbb{P}}
\def\dbR{\mathbb{R}}

\def\dbX{\mathbb{X}}

\def\a{\alpha}
\def\b{\beta}
\def\g{\gamma}
\def\d{\delta}
\def\e{\varepsilon}

\def\k{\kappa}

\def\si{\sigma}

\def\f{\varphi}
\def\th{\theta}
\def\o{\omega}

\def\D{\Delta}
\def\Th{\Theta}

\def\Si{\Sigma}

\def\O{\Omega}
\def\cA{{\cal A}}
\def\cB{{\cal B}}
\def\cC{{\cal C}}

\def\cF{{\cal F}}
\def\cG{{\cal G}}
\def\cH{{\cal H}}
\def\cI{{\cal I}}

\def\cL{{\cal L}}
\def\cM{{\cal M}}

\def\cP{{\cal P}}

\def\cU{{\cal U}}
\def\cV{{\cal V}}

\def\cX{{\cal X}}
\def\cY{{\cal Y}}
\def\cZ{{\cal Z}}

\def\no{\noindent}

\def\ms{\medskip}

\def\q{\quad}
\def\qq{\qquad}

\def\pa{\partial}
\def\cd{\cdot}
\def\cds{\cdots}

\def\td{\nabla}

\def\tr{\hbox{\rm tr}}

\def\qed{ \hfill \vrule width.25cm height.25cm depth0cm\smallskip}

\newcommand{\basa}{\begin{assumption}}
\newcommand{\easa}{\end{assumption}}

\newcommand{\bas}{\begin{assum}}
\newcommand{\eas}{\end{assum}}

\def\limsup{\mathop{\overline{\rm lim}}}

\def\pa{\partial}

 \def\cd{\cdot}
\def\cds{\cdots}

\def\supp{\hbox{\rm supp$\,$}}

\def\tr{\hbox{\rm tr$\,$}}

\def\dis{\displaystyle}

\def\bx{{\bf x}}

\def\1{{\bf 1}}

\def\:{\!:\!}
\def\reff{\eqref}
\def \proof{{\noindent \bf Proof.\quad}}

\def\bm{\textbf{m}}

at 9pt

\definecolor{alp}{rgb}{0.0, 0.5, 0.0}

\newtheorem{thm}{Theorem}[section]

\newtheorem{rem}[thm]{Remark}

\newtheorem{defn}[thm]{Definition}
\newtheorem{assum}[thm]{Assumption}

\begin{document}

\title{\bf  Local Well-posedness of General Mean Field Game Master Equations}
\author{Chenchen Mou\thanks{\noindent  Department of Mathematics,
City University of Hong Kong. E-mail: \href{mailto:chencmou@cityu.edu.hk}{chencmou@cityu.edu.hk}. This author is supported in part by NSFC grant 12522122, NSFC/RGC JRS N\_CityU165/25, GRF 11311422, and GRF 11303223.}, ~ Jianfeng Zhang\thanks{\noindent  Department of Mathematics,
University of Southern California. E-mail:
\href{mailto:jianfenz@usc.edu}{jianfenz@usc.edu}. This author is supported in part by NSF grant DMS-2510403.
} ~ and ~
Jianjun Zhou \thanks{\noindent  College of Science,
Northwest A\&F University. E-mail:
\href{mailto:jianfenz@usc.edu}{zhoujianjun@nwsuaf.edu.cn}. This author is supported in part by  Shaanxi Natural Science Foundation Grant 2025JC-YBMS-021 and 
Shaanxi Fundamental
Science Research Project for Mathematics and Physics Grant 25JSY042.
}
}
\date{}
\maketitle

\begin{abstract}
This paper presents a  generic approach for establishing  mean field game master equations, applicable whenever the mean field equilibrium can be characterized by a McKean-Vlasov forward-backward stochastic differential equation system. The core of our approach is a representation formula for the first-order Lions derivative of the  decoupling field of this forward-backward SDE system. We then employ a bootstrap argument to recursively compute its higher-order derivatives. To demonstrate the method's versatility, we establish the local well-posedness for master equations in three distinct models: extended mean field games, mean field games with volatility control, and mean field games with a major player.
\end{abstract}

\no{\bf Keywords.}  Mean field games, master equation, forward backward SDEs, local well-posedness

\ms
\no{\it 2020 AMS Mathematics subject classification:}  35Q89, 49N80, 60H30, 91A16

\vfill\eject


\section{Introduction}
\label{sect-Introduction}
\setcounter{equation}{0}

The field of Mean Field Games (MFGs), launched independently in the seminal works of Caines-Huang-Malhamé \cite{HCM06} and Lasry-Lions \cite{LL07a}, has witnessed profound development over the past two decades. This framework provides a powerful mathematical paradigm for analyzing the collective behavior of large populations of rational and strategic agents. Foundational aspects of the theory were systematized in the pioneering lectures of Lions \cite{Lions} and the notes of Cardaliaguet \cite{Cardaliguet}, with more comprehensive developments detailed in the monographs of Carmona-Delarue \cite{CD1, CD2}.

A cornerstone of this theory is the master equation, a key innovation attributed to Lions \cite{Lions}. This equation serves as the fundamental PDE governing the value function of a representative agent in the mean field limit, generalizing the HJB equation from classical control theory. Its principal feature is the incorporation of the population's probability distribution as a state variable. This formulation elevates the problem to a PDE on the Wasserstein space of probability measures, an infinite-dimensional setting whose solution fully characterizes the closed-loop mean field equilibrium (MFE). It  encodes precisely how an agent's value and optimal strategy respond not only to their private state but also to the evolution of the overall population distribution.

The well-posedness of master equations is overall a very challenging problem. The existing  literature has mainly focused on the standard MFGs.  For local (in time) well-posedness results, we refer to Bensoussan-Yam \cite{BY}, Gangbo-Swiech \cite{GS}, and Mayorga \cite{M}. The global well-posedness relies on certain monotonicity conditions, and very serious efforts have been made in this direction,  see e.g. Bensoussan-Graber-Yam \cite{BGY}, Bertucci \cite{B1}, Cardaliaguet-Delarue-Lasry-Lions \cite{CDLL}, Cardaliaguet-Souganidis \cite{CarSou}, Chassagneux-Crisan-Delarue \cite{CCD},  Gangbo-Meszaros \cite{GM}, Gangbo-Meszaros-Mou-Zhang \cite{GMMZ}, Jakobsen-Rutkowski \cite{JR},\footnote{This work considers fractional and nonlocal diffusions, under the standard Lasry-Lions monotonicity condition.} and Mou-Zhang \cite{MZ2, MZ3}, to mention a few. We would also like to mention Cecchin-Delarue \cite{CecchinDelarue1} and Iseri-Zhang \cite{IZ} for some dynamic approaches on MFGs with multiple MFEs.

The problem remains largely open when one considers more general MFGs, which appear often in applications. This manuscript introduces a generic approach for the local well-posedness of very general MFG master equations, and we will address their global well-posedness in our accompanying papers \cite{MZZ1, MZZ2}. Our main message is that the master equation admits a unique classical solution over small time intervals as long as the corresponding MFE can be characterized through certain McKean-Vlasov FBSDE systems. To be precise, letting $B=(B^0, B^1)$ and $\cF^0_t := \cF^{B^0_t}$, we consider the following two FBSDE systems:
\bea
\label{intro-Xxi}
&&\left\{\ba{lll}
                         \dis  X_t^{\xi}=\xi+\int_{0}^t b_1(s,\Xi^\xi_s)ds+\int_0^t\sigma_1(s,X_s^{\xi},Y_s^{\xi},\cL_{X^\xi_s|\cF^{0}_s})dB_s,\ms\\
                         \dis Y_t^{\xi}= g_1(X_T^\xi,\cL_{X_T^{\xi}|\cF_T^{0}})+\int_{t}^T f_1(s,\Xi^\xi_s)ds - \int_t^T Z^{\xi}_s dB_s,
                 \ea\right.\\
\label{intro-X2x}
&&\left\{\ba{lll}
                         \dis  X_t^{2,x}=x+\int_{0}^{t}b_2(s,\Xi^{2,x}_s)ds+\int_0^t \sigma_2(s,X_s^{2,x},Y_s^{2,x},\cL_{X_s^\xi|\cF_s^{0}})dB_s,\ms\\
                         \dis Y_t^{2,x}= g_2(X^{2,x}_T,\cL_{X_T^{\xi}|\cF_T^{0}}) +\int_{t}^T  f_2(s,\Xi^{2,x}_s)ds- \int_t^T Z^{2,x}_s dB_s,
                 \ea\right.
\eea
where   $\Xi^\xi:=(X^\xi,Y^{\xi},Z^\xi, \cL_{X^\xi|\cF^0})$, $\Xi^{2, x} := (X^{2,x},Y^{2,x},Z^{2,x},\cL_{X^\xi|\cF^0})$. These systems satisfy the desired flow property, and in particular we may consider their decoupling field $U$:
\bea
\label{intro-U}
Y^{2, x}_t = U\big(t, X^{2, x}_t, \cL_{X^\xi_t|\cF^0_t}\big).
\eea
For a standard MFG, $B^0$ stands for the common noise, $\Phi_1=\Phi_2$ for $\Phi = b, \si, f, g$, and $U$ is the value of the MFG and thus is the  solution to the master equation. We are considering much more general systems here, in particular both $Y^\xi$ and $Y^{2,x}$ can be multi-dimensional.

 Our methodology is grounded in providing a pointwise representation formula for the Lions derivative $\pa_\mu U(t,x, \mu, \tilde x)$, through certain linear McKean-Vlasov FBSDE systems. This representation generalizes those in our earlier works Mou-Zhang \cite{MZ2} and  Gangbo-Meszaros-Mou-Zhang \cite{GMMZ} for standard MFGs. Then,  the differentiability of  $U$ follows from the local well-posedness of these linear FBSDEs, which is more or less standard. We remark that the spatial derivative $\pa_x U$ can also be obtained through certain linear variational  FBSDEs, as in the standard literature.

 One main feature of our representation formula is that the introduced FBSDEs are again in the form of \reff{intro-Xxi}-\reff{intro-X2x} (typically with higher dimensions). This allows us to invoke the bootstrap method to establish  higher order differentiabilities of $U$ for free, provided that the coefficients $(b_i, \si_i, f_i, g_i)$ have sufficient regularity and $T$ is small. Moreover, the time derivative $\pa_t U$ can be obtained by combining the flow property of the system and the spatial differentiability. Consequently, $U$ is a classical solution of the corresponding master equation, derived from applying the It\^{o} formula on \reff{intro-U}. In particular, when the systems \reff{intro-Xxi}-\reff{intro-X2x} are derived from an MFG and $U$ is the value function, we obtain the classical solution of the MFG master equation.

 To illustrate our approach and to show its power, we investigate three non-standard examples: extended MFGs, MFGs with volatility controls, and MFGs with a major player,  all viewed quite challenging problems in the literature. In each case, we proceed in three steps: 1) introduction of the game; 2) heuristic derivation of the  corresponding FBSDE systems; and 3) rigorous establishment of the local well-posedness. We emphasize that the third step follows directly from our main result and thus requires minimum extra efforts.

Extended MFGs, introduced by Lions and Souganidis \cite{LionsSouganidis},\footnote{It should be noted that  \cite{LionsSouganidis} considers MFGs with local coupling, see also Munoz \cite{Munoz}.} generalize standard MFGs by allowing the vector field governing the population flow to differ from that for an individual player at MFE. That is, the coefficients in \reff{intro-Xxi} and \reff{intro-X2x} are different. So, among other reasons, our FBSDE systems are exactly tailored for extended MFGs, and thus the local well-posedness of these master equations follows immediately from our main result. Albeit being simple following our approach, this seems to be the first well-posedness result for extended MFG master equations. In the global setting (with large $T$),   Bertucci-Lasry-Lions \cite{BLL} proved uniqueness for globally Lipschitz continuous solutions, without establishing the existence. Conversely, Mou-Zhang \cite{MZ-EMFG} established the existence of the maximal and minimal solutions for extended submodular MFGs, as introduced in Dianetti-Ferrari-Fischer-Nendel \cite{DFFN1}, confirming its global ill-posedness in general. We shall investigate systematically the global well-posedness of general extended MFG master equations in our accompanying paper \cite{MZZ2}.

The volatility control, as in the standard stochastic control theory, is much harder to analyze. To the best of our knowledge, the well-posedness of master equations (both local and global) for MFGs with volatility controls is completely open. One major difficulty is that the $Z$-components of the FBSDEs correspond to the first order derivatives $\pa_x V$ of the value function $V$, while in the volatility control case the MFE involves the second order derivatives $\pa_{xx} V$. Thus in the standard approach one cannot express its MFE through a self-contained system of FBSDEs. We manage to overcome this difficulty by increasing the dimension of $Y$, roughly speaking, we consider the FBSDE system for $Y_t = (V, \pa_x V, \pa_{xx} V)(t, X_t, \cL_{X^\xi_t|\cF^0_t})$. This enables us to characterize the MFE, as well as the value function $V$, through FBSDEs and hence obtain the local well-posedness of the master equation following our general result. Building on this result, we shall address the global well-posedness  in another accompanying paper \cite{MZZ1} by introducing a new notion called second order monotonicity conditions.\footnote{The first version of \cite{MZZ1} included the local well-posedness result. Since it has been covered by the general approach here, we will delete that part in the revisions.}  We would also like to mention the very interesting works  Chowdhury-Jakobsen-Krupski \cite{CJK1, CJK2},  which study the global well-posedness of the mean field game system (the associated fully nonlinear forward backward PDEs) for certain MFG with volatility control under the standard Lasry-Lions monotonicity. We shall note though these works do not study the master equation, and thus the differentiability of $U$ is irrelevant.

Huang \cite{Huang} introduced MFGs with a major player to model strategic interactions between a single influential principal and a continuum of minor agents. This leads to a coupled system of two master equations: one for the major player and the other for the representative minor player. The resulting FBSDE systems have a major difference in this case though. Note that the minor population relies on the major player's state, denoted as $X^0$, then it is natural to consider the conditional law $\cL_{X^\xi|\cF^{X^0}}$, rather than $\cL_{X^\xi|\cF^{B^0}_t}$ in \reff{intro-Xxi}-\reff{intro-X2x}, where $X^0$ is one component of the $X^{2,x}$ in \reff{intro-X2x}. To overcome this difficulty, we introduce an auxiliary problem by formally considering common noise for the minor players, namely by assuming the minor players rely on the major player's noise $B^0$ rather than his state $X^0$. It turns out that this provides an alternative representation for the same master equation system, and thus enables us to apply our general result to establish its local well-posedness. We next use the classical solution of this master equation system to construct the MFE of the original game, and hence verify that this solution is indeed the value function of the MFG.  We remark that, when the volatility coefficients are constants, the local well-posedness of this master system was first established by Cardaliaguet-Cirant-Porretta \cite{CCP}, while its global well-posedness under the Lasry-Lions monotonicity condition was later obtained by Delarue-Mou \cite{DM}. Both of these results rely primarily on PDE arguments and require uniformly non-degenerate idiosyncratic noise, while our results are derived through probabilistic methods and permit general and possibly degenerate noise.

The remainder of this paper is organized as follows. In Section \ref{sect-setting} we introduce  the basic setting and some preliminary results. In Section \ref{sect-derivativemu}, we derive the key representation formula for the Lions derivative $\pa_\mu U$ of the decoupling field $U$ for the FBSDE systems. In Section \ref{sect-bootstrap} we employ the bootstrap method to establish the higher order differentiability of $U$. Sections \ref{sect-EMFG},  \ref{sect-MFGVC}, and \ref{sect-MFGM} are devoted to extended MFGs, MFGs with volatility controls, and MFGs with a major player, respectively. Finally, in Section \ref{sect-appendix} we complete a postponed technical proof.

\ms
\no{\bf Some notational conventions.} This paper considers multiple dimensional functions and their high order derivatives. The notations can be quite complicated and require very careful attention, in particular their precise descriptions will involve tensors. To ease the presentation and improve the readability, throughout the paper we take the following conventions, including possible notation abuse when there is no confusion.

\begin{itemize}
\item All vectors $x\in \dbR^d$ are viewed as column vectors, with $|x|:= \max_{1\le i\le d}|x_i|$. Given two vectors $x\in \dbR^{d}, y \in \dbR^{m}$, we abuse the notation and simply use $(x, y)$ to denote the column vector $(x^\top, y^\top)^\top \in \dbR^{d+m}$.

\item For a function $f: x\in\dbR^{d}\to \dbR^m$,  its gradient $\pa_x f\in \dbR^{d\times m}$ with $(i,j)$-th component $\pa_{x_j} f^i$. In particular, $\pa_x f^i$ is the $i$-th row of $\pa_x f$ and thus is viewed as a row vector in this case.

\item For $x, y\in \dbR^d$, $x\cd y:= \sum_{i=1}^d x_i y_i$; and for $A, B\in \dbR^{d\times m}$, $A: B := \sum_{i=1}^d \sum_{j=1}^m A_{ij} B_{ij}$.

\item For a random field $\Phi$, we typically write it as $\Phi_t(\cd)$ and we omit $\o$. However, for deterministic functions $\Phi$ involving time variable $t$, we write it as $\Phi(t,\cd)$, and when there is no confusion,  we omit $t$.

\end{itemize}
\section{Preliminaries}
\label{sect-setting}
\setcounter{equation}{0}

Fix a finite time horizon $[0,T]$. For $i=0,1$, let $(\Omega^i,\dbF^i,\dbP^i)$ be a filtered probability space, on which is defined a $d_i$-dimensional Brownian motion $B^i$. For later contexts, $B^0$ and $B^1$ stand for the common noise and the idiosyncratic noise, respectively,\footnote{The interpretation will be different in Section \ref{sect-MFGM}.} and when there is no common noise, we may consider $(\Omega^1,\dbF^1,\dbP^1, B^1)$ only. We shall assume $\dbF^0 = \dbF^{B^0}$ and $\dbF^1 = \cF^1_0\vee \dbF^{B^1}$ with $\cF^1_0$ atomless. We consider the product space: $\O:= \O_0\times \O_1$, $\cF_t:= \cF^0_t \otimes \cF^1_t$, $\dbP:= \dbP^0 \times \dbP^1$, $B := (B^0, B^1)$, $d_{01}:= d_0+d_1$, and denote $\dbE := \dbE^\dbP$. As usual in the MFG literature, given an $\cF_t$-measurable random variable $\xi$, let $\tilde \xi, \bar \xi$ etc. denote the conditionally independent copies of $\xi$, conditional on $\cF^0_t$. This can be easily achieved by expanding the probability space. For example, let $(\tilde \Omega^1,\tilde \dbF^1,\tilde \dbP^1)$ be a copy of $(\Omega^1,\dbF^1,\dbP^1)$, and extend the probability space further to $(\O_0\times \O_1\times \tilde \O_1$,  $\cF^0_t \otimes \cF^1_t \otimes \tilde\cF^1_t$, $\dbP^0 \times \dbP^1\times \tilde \dbP^1)$. Then, for $\xi = \f(\o^0, \o^1)$, $(\o^0, \o^1)\in \O$, the random variable $\tilde\xi := \f(\o^0, \tilde \o^1)$,  $(\o^0, \tilde \o^1)\in \O_0\times \tilde \O_1$, satisfies the desired property. Throughout the paper, whenever needed, we shall extend the probability space in this way without mentioning and we shall still denote it as $(\O, \dbF, \dbP)$, and $\dbE_t:= \dbE_{\cF_t}$.

For $d\ge 1$ and $p\ge 1$, let $\cP_p(\dbR^d)$ denote the set of probability measures $\mu$ on $\dbR^d$ with finite $p$-th moment: $\|\mu\|_p^p= \int_{\dbR^d} |x|^p \mu(dx)<\infty$. For any $\si$-algebra $\cG\subset \cF_T$, let  $\dbL^p(\cG)$ denote the set of $\cG$-measurable random variables $\xi$ with finite $p$-th moment: $\|\xi\|_p^p = \dbE[|\xi|^p]<\infty$, and $\dbL^p(\cG;\mu):=\big\{\xi\in \dbL^p(\cG): \cL_\xi = \mu\big\}$, where $\cL_\xi$ denotes the law of $\xi$ under $\dbP$. The set $\dbL^p(\cG)$ is equipped with the $p$-th norm $\|\cd\|_p$, and $\cP_p(\dbR^d)$ is equipped with the $p$-Wasserstein distance $W_p$:
\beaa
  W_p^p(\mu_1,\mu_2):=\inf\big\{\big(\|\xi_1-\xi_2\|_p: \text{ for all  $\xi_i\in \dbL^p(\cF_T;\mu_i)$, $i=1,2$}\big\},\q \mu_1, \mu_2\in \cP_p(\dbR^d).
\eeaa
Both spaces are complete metric spaces.

For a $W_2$-continuous function $U: \mathcal{P}_2(\dbR^d) \mapsto \mathbb R$, its Lions derivative, also called Wasserstein derivative, is a function $\pa_\mu U: (\mu, \tilde x)\in \mathcal{P}_2(\dbR^d)\times \mathbb R^d\mapsto \mathbb R^d$ satisfying:
\begin{eqnarray}
\label{pamu}
\left.\ba{lll}
\dis U(\mathcal{L}_{\xi +  \eta}) - U(\cL_\xi) = \mathbb E\big[ \partial_\mu U(\mu, \xi) \cdot \eta  \big] + o(\|\eta\|_2), \ \forall\xi, \eta\in\mathbb L^2(\mathcal{F}_T).
\ea\right.
\end{eqnarray}
Denote $\Theta:= [0, T]\times \mathbb R^n \times \mathcal{P}_2(\dbR^d)$ for some generic dimensions $n, d$. For a function $U: \zeta=(t,\bx,\mu)\in \Th\mapsto \dbR$,  we call it  locally uniformly continuous if $U$ is uniformly continuous on $D_R:= \{\zeta\in \Th: |\bx|\le R, \|\mu\|_2\le R\}$ for any $R>0$.\footnote{In the finite dimensional case, clearly locally uniform continuity is equivalent to pointwise continuity.}  We define its first order derivatives $\pa_\bx U: \Th\mapsto\dbR^n$ in the standard sense and $\pa_\mu U: (\zeta, \tilde x)\in\Th \times \dbR^d \mapsto \dbR^d$ as in \reff{pamu}. We may define the second order derivatives, including $\pa_{t}U: \Th\mapsto \dbR$, $\pa_{\bx\bx}U: \Th\mapsto \dbR^{n\times n}$, $(\pa_{ \bx\mu} U,\pa_{\tilde x\mu} U): (\zeta, \tilde x)\in\Th \times \dbR^d \mapsto (\dbR^{n\times d}, \dbR^{d\times d})$, and $\pa_{\mu\mu} U: (\zeta, \tilde x,\bar x)\in\Th \times \dbR^d\times\dbR^d \mapsto \dbR^{d\times d}$. Here we are using the parabolic order, namely the derivative in $t$ counts for two orders, and the derivative in $\bx$ or $\mu$ counts for one order. We follow this to define the $k$th-order (in parabolic sense) derivatives of $U$ for any $k\in\dbN$. Let $\cC^k$ denote the space of   continuous functions $U: \Th\to \dbR^m$ for some generic dimension $m$ such that it has up to $k$th-order  locally uniformly  continuous derivatives, and $\cC^k_b$ the subspace such that all the involved derivatives (but not $U$ itself) are bounded.

One important property  is the following It\^o's formula, see \cite{BLPR, CD2, CCD}. For $i=1,2$, let $d X^i_t := b^i_t dt + \sigma^i_t dB_t,$ where $b^i:[0,T]\times\Omega\to\mathbb R^{d^i_x}$ and $\sigma^i = (\sigma^{i,0}, \si^{i,1}):[0,T]\times\Omega\to\mathbb R^{d^i_x\times d_{01}}$ are $\mathbb F$-progressively measurable, and $U: [0, T]\times \dbR^{d^1_x}\times \cP_2(\dbR^{d^2_x})\to \dbR$ is in $\cC^2_b$  (the boundedness of the derivatives  can be weakened) .
Then, denoting $\th_t:= (t, X^1_t, \mathcal L_{X^2_t|\mathcal F^0_t})$,  we have
\begin{equation}
\left.\begin{array}{ll}
d U(\th_t) =  \Big[\partial_t U(\th_t) + \partial_x U(\th_t)\cdot b^1_t + \frac{1}{2} \partial_{xx} U(\th_t) : \sigma_t^1 (\sigma_t^1)^\top \Big]dt +\partial_xU(\th_t)\cdot\sigma_t^1dB_t  \\
\displaystyle \quad+ \dbE_t \Big[\big[\partial_\mu U(\th_t,\tilde X^2_t) \cd \tilde b^{2}_t + \frac{1}{2} \partial_{\tilde x}\partial_\mu U(\th_t, \tilde X^2_t): \tilde \sigma_t^2 (\tilde \sigma_t^2)^\top\big]dt + \partial_\mu U(\th_t,\tilde X^2_t) \cd \tilde \sigma^{2,0}_tdB_t^0 \label{Ito} \\
\displaystyle \quad+\big[\partial_x\partial_\mu U(\th_t,\tilde X^2_t): \sigma^{1,0}_t (\tilde \sigma_t^{2,0})^\top +\frac{1}{2}\partial_{\mu\mu}U(\th_t,\tilde X^2_t,\bar X^2_t) : \tilde\sigma_t^{2,0}(\bar \sigma_t^{2,0})^\top\big] dt\Big].
\end{array}\right.
\end{equation}
Here $\cL_{X_t^2|\cF_t^0}$ stands for the conditional law of $X_t^2$ given $\cF_t^0$, $\tilde X^2, \bar X^2$ are conditionally independent copies $X^2$, conditional on $\dbF^0$, and similarly for $\tilde b^2$ etc.

Our analysis of MFGs and master equations will rely heavily on coupled McKean-Vlasov FBSDEs, and the focus will be their dependence on the measure variable $\mu$. As a preparation, we present here some  results concerning the $x$-variable,  which are more or less standard and we postpone the proof to Appendix.

Consider the following standard FBSDE  with random coefficients on $[0, T]$:
\begin{eqnarray}\label{FBSDEx}
\begin{cases}
                         \dis  X_t^{x}=x+\int_{0}^t b_s(\Pi^x_s)ds+\int_0^t\sigma_s(X_s^x,Y_s^x)dB_s,\ms\\
                         \dis Y_t^{x}= g(X_T^x)+\int_{t}^T f_s(\Pi^x_s)ds - \int_t^T Z^{x}_s dB_s.
                 \end{cases}
\end{eqnarray}
Here $\Pi^x:=(X^x,Y^x,Z^x)$ is the solution triple; $X, Y$ have dimensions $d_x, d_y$, respectively, and the other terms have dimensions accordingly; $b, \si, f$ are $\dbF^0$-progressively measurable (due to the common noise in the MFGs later); $g$ is $\cF^0_T\times \cB(\dbR^{d_x})$-measurable. In particular, $\si = (\si^0, \si^1)\in \dbR^{d_x\times d_{01}}$,  $Z=(Z^0, Z^1)\in \dbR^{d_y\times d_{01}}$, and quite often $d_y >1$.  We note that $\si$ does not depend on $Z$. For notational convenience, we extend $\si$ and $g$ also to $\pi = (x, y, z)$. Given $x, \D x\in \dbR^{d_x}$, introduce further the following variational FBSDE, with solution $\td_x \Pi = \td_x \Pi^{x, \D x}$ corresponding to the derivatives of $\Pi^x$ with respect to $x$ along the direction $\D x$:
\begin{eqnarray}\label{tdFBSDEx}
\begin{cases}
                         \dis  \td_x X_t= \D x+\int_{0}^t \pa_\pi b_s(\Pi^x_s) \td_x \Pi_s ds +\int_0^t \pa_\pi \si_s(\Pi^x_s) \td_x \Pi_s dB_s,\ms\\
                         \dis \td_x Y_t= \pa_x g(X_T^x)\td_x X_T +\int_{t}^T \pa_\pi f_s(\Pi^x_s)\td_x \Pi_s ds - \int_t^T\td_x Z_s dB_s,
                 \end{cases}
\end{eqnarray}
where, letting $\si^{\k,i}$, $Z^{x,\k,i}$ denote the $i$-th column of $\si^\k, Z^{x,\k}$, for $\k=0,1$, $i=1,\cds, d_\k$,
\bea
\label{papi}
\left.\ba{c}
\dis   \pa_\pi \Phi \td_x \Pi :=\sum_{j=1}^{d_x} \pa_{x_j} \Phi \td_x X^j + \sum_{j=1}^{d_y}\pa_{y_j} \Phi \td_y Y^j +\sum_{l=1}^{d_y}\sum_{\k=0}^1 \sum_{j=1}^{d_\k} \pa_{z^{\k,l, j}} \Phi \td_x Z^{\k, l, j},~ \Phi = b, \si^{\k,i}, f;\\
\dis \pa_\pi \si \td_x \Pi dB_s :=  \sum_{\k=0}^1 \sum_{i=1}^{d_\k} \big[\pa_x \si^{\k, i}\td_x Y + \pa_y \si^{\k, i} \td_x X\big] dB^{\k, i}_s,\\
\dis \td_x Z_s dB_s :=\sum_{\k=0}^1 \sum_{i=1}^{d_\k} \td_x Z^{\k,i}_s dB^{\k, i}_s.
 \ea\right.
 \eea

\begin{assum}
\label{assum-0}
(i) $b, \si, f$ are $\dbF^0$-progressively measurable and $g$ is $\cF^0_T$-measurable, with $I_0^2<\infty$, where $I_t^2 := \dbE_{\cF^0_t}\big[|g(0)|^2 + \sum_{\Phi = b, \si, f}\int_t^T |\Phi_s(0,0,0)|^2ds\big]$.

\ms
\no(ii) For  $\Phi=b, \si, f, g$, $\Phi$ is differentiable in $\pi$ such that $|\pa_\pi\Phi|\le L_\Phi$ and $\pa_\pi \Phi$ is locally uniformly continuous in $\pi$ in the sense that, for any $R>0$, there exists a modulus of continuity function $\rho_R$ such that
\bea
\label{rhoR1}
|\pa_\pi\Phi_t(\pi_1) - \pa_\pi\Phi_t(\pi_2)|\le \rho_R(|\pi_1-\pi_2|),~\mbox{a.s.},\q\mbox{whenever}~ |\pi_1|, |\pi_2|\le R.
\eea
\end{assum}

\begin{thm}\label{thm-pax}
Under Assumption \ref{assum-0},\footnote{ The locally uniform continuity \reff{rhoR1} is needed only for  the time regularity in \reff{rhoR2} and the representation of $Z^{x,1}$ in \reff{randomFK}. In particular, it is not needed for the representation formula \reff{paxrep} of $\pa_x u_0$.}  there exists a constant $\e_0>0$, depending only on the dimensions $d_{01}, d_x, d_y$, and the bounds $L_\Phi$, $\Phi = b, \si, f, g$,  such that, whenever $0<T\leq \e_0$:

\no(i) The FBSDEs \reff{FBSDEx}-\reff{tdFBSDEx} are well-posed; and for any $p\ge 2$, there exists $C_p>0$ such that
\bea
\label{Lpest1}
\|\td_x \Pi\|_p^p:= \dbE\Big[ \sup_{0\le t\le T} [|\td_x X_t|^p + |\td_x Y_t|^p] + \big(\int_0^T |\td_x Z_t|^2dt\big)^{p\over 2}\Big] \le C_p |\D x|^p<\infty.
\eea

\no (ii) There exists an $\dbF^0$-progressively measurable random field $u: [0, T]\times \dbR^{d_x}\times \O\to \dbR^{d_y}$ such that $u$ is  differentiable in $x$, $|\pa_x u_t|\le R_0:= L_g+1$, and  $\pa_x u$ is  locally uniformly continuous in the sense that, for any $R>0$, there exists a modulus of continuity function $\rho_{R}$ such that
\bea
\label{rhoR2}
\left.\ba{c}
|\pa_x u_t(x^1) - \pa_x u_t(x^2)| \le \rho_{R}(|x_1-x_2|), ~ \mbox{a.s. on $\{I_t \le R\}$, whenever}~  |x_1|, |x_2| \le R;\\
|\pa_x u_{t_1}(x)- \dbE_{\cF^0_{t_1}}[\pa_x u_{t_2}(x)]|\le \rho_{R}(\sqrt{t_2-t_1}), ~\mbox{a.s. on $\{I_{t_1} \le R\}$, whenever}~  |x| \le R, t_1\le t_2.
\ea\right.
\eea
Moreover, it holds that
\bea
\label{randomFK}
Y^x_t = u_t(X^x_t),\q \td_x Y_t = \pa_x u_t(X^x_t) \td_x X_t,\q Z^{x,1}_t = \pa_x u_t(X^x_t) \si^1_t(X^x_t, Y^x_t).
\eea
In particular, $u_0(x) = Y^x_0$ is deterministic and we have a representation formula for $\pa_x u_0(x)$:\footnote{\label{matrix}In \reff{tdFBSDEx} we consider the directional derivatives of $\Pi$, and thus $\td_x \Pi$ has the same dimension as $\Pi^x$ itself.  Alternatively we may consider $\hat \td_x \Pi^x = (\td_x \Pi^{x, e_1}, \cds, \td_x \Pi^{x, e_{d_x}})$, which does not depend on $\D x$ anymore. In fact, $\td_x \Pi^{x, \D x} = \hat\td_x \Pi^x \D x$. Then $\hat \td_x X^x\in \dbR^{d_x\times d_x}$, with $\hat \td_x X^x_0$ the $d_x\times d_x$ identity matrix,  and $\pa_x u_0(x) = \hat\td _x Y^x_0\in \dbR^{d_y\times d_x}$. The results are actually more clean in this way, however, the notations will become a lot heavier, especially when we discuss the higher order derivatives. }
\bea
\label{paxrep}
\pa_{x_i} u_0(x) = \td_x Y^{x, e_i}_0,\q i=1,\cds, d_x,
\eea
where $e_i$ denotes the basis vector in $\dbR^{d_x}$ with the $i$-th coordinate equal to $1$.
\end{thm}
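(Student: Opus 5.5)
The plan is the classical small-time contraction argument for Lipschitz FBSDEs, followed by the construction of the random decoupling field through the flow property.

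\textbf{Step 1: well-posedness and the estimate \reff{Lpest1}.} For \reff{FBSDEx}, given a candidate $(Y,Z)$ in $\dbL^2$, I solve the forward SDE for $X$, then the backward equation (Lipschitz in $z$) to obtain a new $(Y',Z')$. Standard BSDE and SDE estimates show that this map has Lipschitz constant at most $C(L_g+T\cdot\text{stuff})$. The subtle point is that $\si$ depends on $Y$ but not on $Z$, and $g$ is Lipschitz with constant $L_g$ that need not be small. So I would do the fixed point on $Y$ together with $X$, using norms weighted so that the product of $L_g$ and the forward contribution of $\partial_y \si$, which is $O(\sqrt T)$ in $\sup$-norm $\dbL^2$, is small. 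This gives a contraction for $T\le\e_0$ depending only on the dimensions and the $L_\Phi$.

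The linear system \reff{tdFBSDEx} has bounded, progressively measurable coefficients $\pa_\pi\Phi(\Pi^x)$ with the same bounds, so the same argument applies. The $\dbL^p$ estimate \reff{Lpest1} follows from the BDG inequality and $\dbL^p$ BSDE estimates. Here the smallness of $T$ again absorbs the coupling terms, and linearity gives the bound $C_p|\D x|^p$.

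\textbf{Step 2: the decoupling field and the bound $|\pa_x u|\le R_0$.} I would solve the system on $[t,T]$ starting from $x$ at time $t$. Since the coefficients are $\dbF^0$-adapted, the solution $Y^{t,x}_t$ is $\cF^0_t$-measurable; indeed the shifted idiosyncratic noise is independent, so by a regular conditional probability argument $Y_t^{t,x}$ depends only on $\o^0$. I define $u_t(x):=Y^{t,x}_t$. The flow property $Y^x_t=u_t(X^x_t)$ follows from pathwise uniqueness, after checking joint measurability and continuity of $u_t$ in $x$ via the Lipschitz estimate $|u_t(x_1)-u_t(x_2)|\le C|x_1-x_2|$.

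Differentiability of $u$ in $x$ follows from differentiating the solution map: the difference quotients $(\Pi^{x+\e\D x}-\Pi^x)/\e$ solve a linear FBSDE whose coefficients are averaged derivatives. Because the derivatives are merely bounded and continuous, these coefficients converge in probability, and the stability of the linear FBSDE (Step 1) then gives convergence to $\td_x\Pi$. This yields \reff{paxrep} and, by the flow argument at time $t$, the identity $\td_x Y_t=\pa_x u_t(X^x_t)\td_x X_t$.

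The bound $|\pa_x u|\le L_g+1$ comes from the estimate $|\td_x Y_t|\le (L_g+CT^{1/2})|\td_x X_t|$, obtained conditionally from Step 1 on $[t,T]$ with $\D x$ at time $t$; shrinking $\e_0$ makes $CT^{1/2}\le 1$.

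\textbf{Step 3: regularity \reff{rhoR2} and the $Z$ representation.} Spatial continuity of $\pa_x u$ uses the stability of the linear FBSDE with respect to its coefficients $\pa_\pi\Phi(\Pi^{x_i})$. The modulus $\rho_R$ in \reff{rhoR1}, together with the moment bounds conditional on $I_t\le R$, controls the difference. Time continuity follows by writing $\pa_x u_{t_1}(x)=\td_x Y_{t_1}$ and comparing with $\dbE_{\cF^0_{t_1}}[\pa_x u_{t_2}(X^{t_1,x}_{t_2})\td_xX_{t_2}]$, then using $|X_{t_2}-x|=O(\sqrt{t_2-t_1})$ and the spatial modulus.

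Finally, $Z^{x,1}_t=\pa_xu_t\si^1$ is obtained by identifying the $dB^1$ martingale part of $u_t(X^x_t)$. Since $u$ is only a random field adapted to $B^0$, I would avoid a full Itô–Wentzell formula. Instead, I would compute $\dbE_t[(Y_{t+h}-Y_t)(B^1_{t+h}-B^1_t)]/h$ via a Taylor expansion of $u_{t+h}$ around $X_t$, using independence of $B^1$ from $\cF^0$ and the continuity of $\pa_x u$ in \reff{rhoR2}.

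I expect this last identification, together with obtaining uniform (rather than merely $\dbL^2$) moduli, to be the main technical obstacle.
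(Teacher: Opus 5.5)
You have a genuine gap in the proof of \reff{Lpest1}. The rest of your plan follows the paper's proof: small-time well-posedness, the bound $|\pa_x u_t|\le L_g+CT^{1/2}$ from the conditional $\dbL^2$ estimate on $[t,T]$, difference quotients for \reff{paxrep} and for $\td_xY_t=\pa_xu_t(X^x_t)\td_xX_t$, the spatial modulus via stability plus truncation, and the time modulus via comparison with $\dbE_{\cF^0_{t_1}}[\pa_xu_{t_2}(X_{t_2})\td_xX_{t_2}]$.

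The theorem fixes a single $\e_0$, depending only on the dimensions and the $L_\Phi$, that works for \emph{every} $p\ge 2$. If you absorb the coupling terms directly in $\dbL^p$, the factor to be made small is of the form $C_p(L_g+1)\sqrt T$. Here $C_p$ comes from the BDG and $\dbL^p$-BSDE inequalities and grows with $p$. So you only get $T\le \e_p$ with $\e_p\to 0$, and no uniform $\e_0$. This uniformity is used later. Your own spatial-modulus step needs $\|\td_x\Pi\|_4$ for the same $T$. Section~\ref{sect-bootstrap} uses $\|\td_x\Pi\|_{2p}$ for all $p$, with $\e_0$ depending only on first-order bounds.

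The fix is the paper's Step 2, and it uses what you already prove, but placed before the $\dbL^p$ estimate. From the $\dbL^2$ theory you have $\td_xY_t=\Si_t\td_xX_t$ with $|\Si_t|\le R_0$ for all $T\le\e_0$. For fixed $p$, choose $\e_p$ so that the $\dbL^p$ estimate holds on any interval of length at most $\e_p$ whenever the terminal condition is $\td_xY=\Si\td_xX$ with $|\Si|\le R_0$. Split $[0,T]$ into $m$ such intervals. On each, solve with terminal condition $\td_xY_{T_i}=\Si_{T_i}\td_xX_{T_i}$ and initial value $\td_xX_{T_{i-1}}$. Chaining the bounds gives $C_p^m|\D x|$, while $\e_0$ stays independent of $p$.

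For $Z^{x,1}=\pa_xu_t(X^x_t)\si^1_t(X^x_t,Y^x_t)$ you take a genuinely different route. The paper approximates the coefficients by ones depending on $B^0$ only through $B^0_{t_1},\dots,B^0_{t_i}$. Then $u$ is a deterministic function of $(t,x,B^0_{t_1:t_i})$ with bounded continuous second spatial derivatives. The paper applies the classical It\^o formula with time frozen at $\e$, and passes to the limit using stability and the $n$-uniform moduli \reff{rhoR2}.

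Your covariation argument avoids the approximation and any second-order regularity, and it can be completed. The random variables $u_{t+h}(X_t)$ and $\pa_xu_{t+h}(X_t)$ are $\cF_t\vee\cF^0_{t+h}$-measurable, so $\Delta B^1:=B^1_{t+h}-B^1_t$ is independent of them. After freezing $\si_s(X_s,Y_s)$ at $\si_s(X_t,Y_t)$, the error is negligible in $\dbL^1(dt\otimes d\dbP)$ after dividing by $h$. The $dB^0$ contribution then vanishes by this independence. The $dB^1$ contribution equals $\dbE_t\big[\pa_xu_{t+h}(X_t)\,\frac1h\int_t^{t+h}\si^1_s(X_t,Y_t)ds\big]$.

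At this point you cannot simply factor out $\dbE_t[\pa_xu_{t+h}(X_t)]$ and invoke \reff{rhoR2}. Both factors depend on $B^0$ over $[t,t+h]$, and $\si^1_s(\pi)$ has no regularity in $s$. First replace $\frac1h\int_t^{t+h}\si^1_s(X_t,Y_t)ds$ by the $\cF_t$-measurable $\si^1_t(X_t,Y_t)$. This holds only in $\dbL^1(dt\otimes d\dbP)$, by Lebesgue differentiation, using the uniform Lipschitz continuity in $\pi$ to handle the random argument. Only then apply the second line of \reff{rhoR2}. Combined with $\frac1h\dbE_t\int_t^{t+h}Z^1_sds\to Z^1_t$ in $\dbL^2(dt\otimes d\dbP)$, this identifies $Z^{x,1}$ $dt\otimes d\dbP$-a.e. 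Your route is more direct; the paper's reuses the Markovian regularity machinery and the uniform moduli it needs anyway.
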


While the local well-posedness of the FBSDEs is standard and the representation \reff{paxrep} is natural, the regularity of $u$ is not trivial. In particular, the representation of $Z^{x,1}$ under such weak conditions seems new. However, since our main focus is the Lions derivative with respect to measure variable $\mu$, we postpone this proof to Section \ref{sect-appendix} below.

\section{Representation for Lions derivatives of decoupling fields}
\label{sect-derivativemu}
\setcounter{equation}{0}

Note that Theorem \ref{thm-pax}  provides a representation formula for $\pa_x u_0$, where $u$ is the decoupling field of the (standard) FBSDE \reff{FBSDEx}.  In this section we present a representation formula for the Lions derivatives of the decoupling field of general McKean-Vlasov FBSDE system, which will be the key for the regularity of the solutions to MFG master equations.

Given $\xi\in\dbL^2(\cF_0,\dbR^{d^1_x})$ and $x\in\dbR^{d^2_x}$, recall the McKean-Vlasov FBSDE systems \reff{intro-Xxi}-\reff{intro-X2x}:
\bea
\label{Xxi}
&&\left\{\ba{lll}
                         \dis  X_t^{\xi}=\xi+\int_{0}^t b_1(s,\Xi^\xi_s)ds+\int_0^t\sigma_1(s,X_s^{\xi},Y_s^{\xi},\cL_{X^\xi_s|\cF^{0}_s})dB_s,\ms\\
                         \dis Y_t^{\xi}= g_1(X_T^\xi,\cL_{X_T^{\xi}|\cF_T^{0}})+\int_{t}^T f_1(s,\Xi^\xi_s)ds - \int_t^T Z^{\xi}_s dB_s,
                 \ea\right.\\
\label{X2x}
&&\left\{\ba{lll}
                         \dis  X_t^{2,x}=x+\int_{0}^{t}b_2(s,\Xi^{2,x}_s)ds+\int_0^t \sigma_2(s,X_s^{2,x},Y_s^{2,x},\cL_{X_s^\xi|\cF_s^{0}})dB_s,\ms\\
                         \dis Y_t^{2,x}= g_2(X^{2,x}_T,\cL_{X_T^{\xi}|\cF_T^{0}}) +\int_{t}^T  f_2(s,\Xi^{2,x}_s)ds- \int_t^T Z^{2,x}_s dB_s,
                 \ea\right.
\eea
where $(X^\xi, Y^\xi)$ is $d_x^1\times d^1_y$-dimensional,  $(X^{2,x}, Y^{2,x})$ is $d_x^2\times d^2_y$-dimensional, and the other terms have dimensions accordingly. Here and in the sequel, $\Pi^\xi:=(X^\xi,Y^{\xi},Z^\xi)$, $\Xi^\xi:=(\Pi^\xi, \cL_{X^\xi|\dbF^0})$, and $ \Pi^{2, x}=(X^{2,x},Y^{2,x},Z^{2,x})$, $ \Xi^{2, x}:=(\Pi^{2,x},\cL_{X^\xi|\dbF^0})$.  It is clear that $\Pi^{2,x}$, $\Xi^{2,x}$ depend on $\xi$ as well. When there is a need to emphasize this dependence, we use the notation $\Pi^{2,\xi, x} = \Pi^{2,x}$ and $\Xi^{2,\xi,x}=\Xi^{2,x}$. We shall use this convention for other similar notations.
We remark that, for the purpose of studying higher order derivatives in the next section and for studying extended mean field games,  we consider different coefficients in \reff{Xxi} and \reff{X2x}. From now on, for $i=1,2$, the coefficients $b_i, \si_i, f_i, g_i$ are deterministic functions, and as our notational convention we often omit  $t$. Moreover, for notational convenience we  write $\si_i, g_i$  as $\si_i(\pi, \mu)$ and $g_i(\pi,\mu)$, for $\pi= (x, y, z)$. Provided the well-posedness which we will establish later, it is obvious that $Y^{2, \xi,x}_0 = Y^{2,x}_0$ is law invariant in terms of $\xi$, thus we can introduce:
\bea
\label{Uxmu}
U(0,x,\mu):= Y^{2, \xi, x}_0,\q \cL_\xi = \mu.
\eea
Our goal of this section is to provide a representation formula for $\pa_\mu U(0,x, \mu, \tilde x)$.

 Considering the $\tilde x\in \dbR^{d^1_x}$ inside $\pa_\mu U(0,x, \mu, \tilde x)$, we shall also introduce the following FBSDE with coefficients $(b_1, \si_1, f_1, g_1)$:
 \begin{eqnarray}\label{X1x}
\begin{cases}
                         \dis  X_t^{1, \tilde x}=\tilde x+\int_{0}^{t}b_1(s,\Xi^{1,\tilde x}_s)ds+\int_0^t \sigma_1(s,X_s^{1,\tilde x},Y_s^{1, \tilde x},\cL_{X_s^\xi|\cF_s^0})dB_s,\ms\\
                         \dis Y_t^{1, \tilde x}= g_1(X^{1, \tilde x}_T,\cL_{X_T^{\xi}|\cF_T^0}) +\int_{t}^T  f_1(s,\Xi^{1, \tilde x}_s)ds- \int_t^T Z^{1, \tilde x}_s dB_s.
                 \end{cases}
\end{eqnarray}
As in \reff{X2x}, here $\Pi^{1, \xi, \tilde x} = \Pi^{1,x} = (X^{1,x}, Y^{1,x}, Z^{1,x})$, $\Xi^{1,\xi, \tilde x}= \Xi^{1,\tilde x}=(\Pi^{1,\tilde x}, \cL_{X^\xi|\dbF^0})$. It is clear that $\Pi^{1, \xi, \xi} = \Pi^\xi$, and when $\Phi_1=\Phi_2$ for $\Phi=b,\si, f, g$, we have $\Pi^{1,x}=\Pi^{2,x}$.

We now specify the technical conditions.
\begin{assum}\label{assum-1} (i)  All the coefficients are deterministic and progressively measurable in all variables, with $ \int_0^T |\Phi_i(t,0,0,0, \d_0)|^2dt<\infty$ for $\Phi = b, \si, f$, $i=1,2$.

\ms
\no(ii) For $\Phi = b, \si, f, g$ and $i=1, 2$, $\pa_\pi \Phi_i$ and $\pa_\mu \Phi_i$ exist and are continuous in $(\pi, \mu, \tilde x)$, with $|\pa_\pi\Phi|, |\pa_\mu \Phi| \leq L_1$. In particular, $|\pa_x g_i|, |\pa_\mu g_i|\le L_g$.
\end{assum}

As in \reff{tdFBSDEx}, we shall introduce certain variational FBSDEs. For this purpose, we first introduce some differential operators. Let $\Phi$ be a function with variables $\zeta=(\pi,\mu) = (x, y, z, \mu)\in \dbR^{d^i_x}\times \dbR^{d^i_y} \times \dbR^{d^i_y\times d_{01}} \times \cP_2(\dbR^{d^1_x})$. Here the measure variable $\mu$ is always $d^1_x$-dimensional, but the dimension of $\pi$ could vary, which will be made clear in the contexts, and for simplicity we shall use the same notation $\pi$.   Given $\d \pi = (\d x, \d y, \d z)$, and $\vec x = (x^1,\cds, x^m)$, $\d\vec x=( \delta x^1,\cds, \delta x^m) \in (\dbR^{d^i_x})^m$, for some $m\ge 1$, denote
 \bea
 \label{tdPhi1}
\dis  \nabla_{\mu}\Phi(\zeta; \vec x, \delta\vec x):= \sum_{i=1}^m(\delta x^i)^\top \pa_{\mu}\Phi(\zeta,x^i);\q
\nabla \Phi(\zeta; \d\pi; \vec x, \delta\vec x):= \pa_\pi\Phi(\zeta)\delta \pi + \nabla_{\mu}\Phi(\zeta; \vec x, \delta\vec x),
\eea
where  $\pa_\pi\Phi(\zeta)\delta \pi$ is defined by \reff{papi}, with $\td_x\Pi$ replaced with $\d\pi$. Note that the above operators are linear in $\d\pi$ and $\d \vec x$, consequently  the FBSDEs  driven by them at below are linear.

Given $\tilde x, \D \tilde x\in \dbR^{d^1_x}$, as in \reff{FBSDEx}  let $\td_{\tilde x} \Pi^{1, \xi, \tilde x, \D\tilde x} = \td_{\tilde x} \Pi^{\tilde x} = (\nabla_{\tilde x} X^{\tilde x}, \nabla_{\tilde x} Y^{\tilde x}, \nabla_{\tilde x} Z^{\tilde x})$ correspond to the derivatives of $\Pi^{1,\tilde x}$ with respect to $\tilde x$ along the direction $\D\tilde x$: recalling the notations \reff{papi},
\begin{eqnarray}\label{tdxFBSDE}
\begin{cases}
                         \dis  \nabla_{\tilde x} X_t^{\tilde x}=\Delta \tilde x+\int_{0}^t \pa_\pi b_1(\Xi^{1,\tilde x}_s)\nabla_{\tilde x}\Pi^{\tilde x}_sds+\int_0^t\pa_\pi \sigma_1(\Xi_s^{1,\tilde x}) \nabla_{\tilde x}\Pi_s^{\tilde x}dB_s,\\
                         \dis \nabla_{\tilde x}Y_t^{\tilde x}= \pa_x g_1(\Xi^{1,\tilde x}_T)\nabla_{\tilde x}\Pi^{\tilde x}_T+\int_{t}^T \pa_\pi f_1(\Xi^{1,\tilde x}_s)\nabla_{\tilde x}\Pi^{\tilde x}_sds- \int_t^T \nabla_{\tilde x} Z^{\tilde x}_s dB_s.
                 \end{cases}
\end{eqnarray}
The next one is motivated from differentiating \eqref{Xxi} in $\xi$, see \eqref{tdXeta} below. However, to obtain the pointwise representation, we need to make some modifications, with the solution denoted as $\nabla_{\mu}\Pi^{1, \xi, \tilde x}=\nabla_{\mu}\Pi^{\tilde x}:=(\nabla_{\mu}X^{\tilde x},\nabla_{\mu}Y^{\tilde x},\nabla_{\mu}Z^{\tilde x})$, and denoting $\Upsilon^{\tilde x}:=\big((X^{1, \tilde x},X^\xi),(\nabla_{\tilde x}X^{\tilde x},\nabla_{\mu}X^{\tilde x})\big)$:
\begin{eqnarray}\label{tdmuFBSDE}
\begin{cases}
                         \dis  \nabla_{\mu} X_t^{\tilde x}=\int_{0}^t  \dbE_s  \big[\nabla b_1(\Xi^{\xi}_s;\nabla_{\mu}\Pi^{\tilde x}_s;\tilde\Upsilon_s^{\tilde x})\big]ds+\int_0^t\dbE_s  \big[\nabla \sigma_1(\Xi^{\xi}_s;\nabla_{\mu}\Pi^{\tilde x}_s;\tilde\Upsilon_s^{\tilde x})\big]dB_s,\\
                         \dis \nabla_{\mu}Y_t^{\tilde x}=\dbE_T \big[\nabla g_1(\Xi^{\xi}_T;\nabla_{\mu}\Pi^{\tilde x}_T;\tilde\Upsilon_T^{\tilde x})\big]+\int_{t}^T \dbE_s  \big[\nabla f_1(\Xi^{\xi}_s;\nabla_{\mu}\Pi^{\tilde x}_s;\tilde\Upsilon_s^{\tilde x})\big]ds - \int_t^T \nabla_{\mu} Z^{\tilde x}_s dB_s.
                 \end{cases}
\end{eqnarray}
We note that
\beaa
\mbox{$\Upsilon^{\tilde x}$ corresponds to $(\vec x, \d \vec x)$ in (\ref{tdPhi1}) with $m=2$, $\vec x = (X^{1,\tilde x},X^\xi)$, $\d \vec x = (\nabla_{\tilde x}X^{\tilde x},\nabla_{\mu}X^{\tilde x})$.}
\eeaa
Recall that $\tilde \Upsilon^{\tilde x}$ is a conditionally independent copy of $\Upsilon^{\tilde x}$,  so \reff{tdmuFBSDE} involves the law of the unknown $\nabla_{\mu}X^{\tilde x}$ and thus it is a linear McKean-Vlasov FBSDE system. Our representation formula requires another FBSDE system, by replacing the $\Pi^\xi$ above with $\Pi^{2,x}$ and using the coefficients $\Phi_2$:
\begin{eqnarray}\label{tdmuFBSDE2}
\begin{cases}
                         \dis  \nabla_{\mu} X_t^{x,\tilde x}=\int_{0}^t  \dbE_s  \big[\nabla b_2(\Xi^{2,x}_s;\nabla_{\mu}\Pi^{x,\tilde x}_s;\tilde\Upsilon_s^{\tilde x})\big]ds+\int_0^t\dbE_s  \big[\nabla \sigma_2(\Xi^{2,x}_s;\nabla_{\mu}\Pi^{x,\tilde x}_s;\tilde\Upsilon_s^{\tilde x})\big]dB_s,\ms\\
                         \dis \nabla_{\mu}Y_t^{x,\tilde x}=\dbE_T \big[\nabla g_2(\Xi^{2,x}_T;\nabla_{\mu}\Pi^{x,\tilde x}_T;\tilde\Upsilon_T^{\tilde x})\big] - \int_t^T\nabla_{\mu} Z^{x,\tilde x}_s dB_s\\
      \dis  \qq\qq\qq                 + \int_{t}^T \dbE_s  \big[\nabla f_2(\Xi^{2,x}_s;\nabla_{\mu}\Pi^{x,\tilde x}_s;\tilde\Upsilon_s^{\tilde x})\big]ds.
                 \end{cases}
\end{eqnarray}
Here the solution is denoted as $\nabla_{\mu}\Pi^{2,\xi, x,\tilde x, \D \tilde x} = \nabla_{\mu}\Pi^{x,\tilde x}=(\nabla_{\mu}X^{x,\tilde x},\nabla_{\mu}Y^{x,\tilde x},\nabla_{\mu}Z^{x,\tilde x})$. We note that this FBSDE is a standard one without involving the law of the unknown.

\begin{thm}\label{thm-pamuUrep}
Let Assumption \ref{assum-1} hold.

\no (i) There exists $\e_0>0$, depending only on $L_1$, and the dimensions $d^1_x, d^1_y, d^2_x, d^2_y, d_{01}$,  such that for any $x\in \dbR^{d^2_x}$, $\tilde x, \D \tilde x\in \dbR^{d^1_x}$,  $\xi\in\dbL^2(\cF_0^1,\dbR^{d^1_x})$, and $T\le \e_0$,  the FBSDE systems \eqref{Xxi}, \eqref{X2x}, \eqref{X1x},\eqref{tdxFBSDE}, \eqref{tdmuFBSDE}, \eqref{tdmuFBSDE2} are well-posed on $[0,T]$, and for any $p\ge 2$, there exists $C_p>0$ such that
\bea
\label{Lpest2}
 \|\td_{\tilde x} \Pi^{\tilde x}\|_p + \|\td_{\mu} \Pi^{\tilde x}\|_p + \|\td_{\mu} \Pi^{x,\tilde x}\|_p \le C_p|\D \tilde x|.
\eea

\no(ii) For the $U$ defined by \reff{Uxmu}, $\pa_{\mu}U(0,\cdot,\cd,\cd)$ exists and is continuous, and
\bea
\label{pamuUbound}
|\pa_{\mu}U(0,x,\cL_{\xi},\tilde x)|\le R_0:=L_g+1.
\eea
 Moreover, the following representation formula holds
\begin{equation}\label{pamuUrep}
\pa_{\mu_i}U(0,x,\cL_{\xi},\tilde x) = \nabla_{\mu}Y_0^{2,\xi,x,\tilde x, \tilde e_i},
\end{equation}
where $\{\tilde e_i\}_{i=1,\cds, d^1_x}$ is the basis of $\dbR^{d^1_x}$, and $\nabla_{\mu}Y_0^{2,\xi,x,\tilde x, \tilde e_i}=\nabla_{\mu}Y_0^{x,\tilde x}$ as in \reff{tdmuFBSDE2} with $\D \tilde x = \tilde e_i$.
\end{thm}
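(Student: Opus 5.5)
The plan is to compute a Gâteaux derivative of $U(0,x,\cdot)$ along lifts $\xi+\e\eta$, and then to show by a linear superposition argument that it can be written pointwise through \reff{tdmuFBSDE2}.

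\emph{Step 1: well-posedness and estimates (part (i)).} For $T\le\e_0$ small, I will solve \reff{Xxi} by a Picard contraction in the space $\dbS^2\times\dbS^2\times\dbH^2$. The conditional law enters Lipschitz-continuously through $W_2(\cL_{X|\cF^0},\cL_{X'|\cF^0})\le (\dbE_{\cF^0}|X-X'|^2)^{1/2}$, using the bound $|\pa_\mu\Phi_i|\le L_1$. With $\cL_{X^\xi|\dbF^0}$ now fixed, \reff{X2x} and \reff{X1x} are standard FBSDEs with $\dbF^0$-random coefficients, so Theorem \ref{thm-pax} applies to them, and also to the variational system \reff{tdxFBSDE}. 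The system \reff{tdmuFBSDE} is a linear McKean--Vlasov FBSDE whose coefficients are bounded by $L_1$ and whose free term is driven by $\tilde\Upsilon^{\tilde x}$; the same contraction, carried out in $L^p$ via BDG, gives its well-posedness. Finally, \reff{tdmuFBSDE2} is a standard linear FBSDE once $\nabla_\mu X^{\tilde x}$ is known. The $L^p$ bounds \reff{Lpest2} follow from linearity, with constants linear in $|\D\tilde x|$. $\e_0$ depends only on $L_1$ and the dimensions, because every contraction constant has the form $C(L_1)\,T$ or $C(L_1)\sqrt T$ with $C$ dimension-dependent.

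\emph{Step 2: Gâteaux derivative in $\xi$.} Fix $\eta\in\dbL^2(\cF^1_0)$. I introduce the variational system $\nabla_\eta\Pi^\xi$ obtained by formally differentiating \reff{Xxi} along $\xi+\e\eta$. Its coefficients are $\pa_\pi\Phi_1(\Xi^\xi)\nabla_\eta\Pi^\xi+\dbE_s[(\nabla_\eta\tilde X^\xi_s)^\top\pa_\mu\Phi_1(\Xi^\xi_s,\tilde X^\xi_s)]$, with initial value $\eta$. I also introduce the corresponding system $\nabla_\eta\Pi^{2,x}$ for \reff{X2x}, whose initial value is $0$. I will then prove $\e^{-1}(\Pi^{\xi+\e\eta}-\Pi^\xi)\to\nabla_\eta\Pi^\xi$ in $L^2$, and likewise for $\Pi^{2,x}$. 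This is done by writing the difference quotients as solutions of linear FBSDEs with coefficients given by averaged derivatives $\int_0^1\pa\Phi(\cdot+\th\cdot)d\th$, and using the continuity of $\pa_\pi\Phi_i,\pa_\mu\Phi_i$ together with dominated convergence and the small-$T$ stability estimate. This yields $\frac{d}{d\e}U(0,x,\cL_{\xi+\e\eta})|_{\e=0}=\nabla_\eta Y^{2,x}_0$.

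\emph{Step 3: superposition.} This is the step I expect to be the main obstacle. I claim that
\[
\nabla_\eta X^\xi_t=\nabla_{\tilde x}X^{\tilde x}_t\big|_{\tilde x=\xi,\D\tilde x=\eta}+\tilde\dbE_{\cF^0}\big[\nabla_\mu X^{\tilde x}_t\big]\big|_{\tilde x=\tilde\xi,\D\tilde x=\tilde\eta},
\]
with the analogous identities for $Y,Z$, and that $\nabla_\eta\Pi^{2,x}=\tilde\dbE_{\cF^0}[\nabla_\mu\Pi^{x,\tilde x}]|_{\tilde x=\tilde\xi,\D\tilde x=\tilde\eta}$. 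To verify this, I first check measurability and $L^2$-continuity of $(\tilde x,\D\tilde x)\mapsto\nabla_{\tilde x}\Pi^{\tilde x},\nabla_\mu\Pi^{\tilde x}$; this comes from stability of the linear systems, and it is what allows substituting the random variables $\xi,\eta$, which are independent of $B$. I then use linearity in $\D\tilde x$. Plugging the right-hand side into the $\nabla_\eta$ system and using $\Pi^{1,\xi,\xi}=\Pi^\xi$, the term $\pa_\pi\Phi_1\nabla_{\tilde x}\Pi$ at $\tilde x=\xi$ together with the two $\mu$-terms carried by $\tilde\Upsilon$ reproduces exactly the McKean--Vlasov term $\dbE_s[(\nabla_\eta\tilde X^\xi)^\top\pa_\mu\Phi_1]$. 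The bookkeeping of the copies is what makes this work: the $X^{1,\tilde x}$ slot in $\Upsilon$ produces the direct $\eta$-effect, and the $X^\xi$ slot produces the induced effect. Uniqueness from Step 1 then gives the identification.

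\emph{Step 4: conclusion (part (ii)).} Combining Steps 2 and 3 gives
\[
\frac{d}{d\e}U(0,x,\cL_{\xi+\e\eta})\Big|_{\e=0}=\dbE\Big[\sum_i\eta_i\,\nabla_\mu Y_0^{x,\xi,\tilde e_i}\Big].
\]
So \reff{pamu} holds in Gâteaux form with candidate $\pa_{\mu_i}U$ given by \reff{pamuUrep}. Continuity of this candidate in $(x,\cL_\xi,\tilde x)$ follows from the stability estimates in Step 1. Continuity plus the Gâteaux property upgrades to the Fréchet-type expansion \reff{pamu}, via the mean value theorem along $\xi+\th\eta$. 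Law invariance is inherited from $U$. For the bound \reff{pamuUbound}, the terminal contribution in \reff{tdmuFBSDE2} is controlled by $L_g$, and all remaining terms are $O(\sqrt T)$ by \reff{Lpest2} applied with $|\D\tilde x|=1$; shrinking $\e_0$ then gives $|\pa_\mu U|\le L_g+1$.
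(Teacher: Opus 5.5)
Your proposal is correct, and its key step (Step 3) takes a genuinely different route from the paper.

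\textbf{How the paper does it.} The paper only treats discrete $\xi$ with $\dbP(\xi=\tilde x_i)=p_i$, and only the special directions $\eta_i=\1_{\{\xi=\tilde x_i\}}\Delta\tilde x$. For these the superposition is a finite sum coming from the partition identity $\Xi^\xi=\sum_i\1_{\{\xi=\tilde x_i\}}\Xi^{1,\tilde x_i}$. This gives $\delta\Pi^{\eta_i}=\1_{\{\xi=\tilde x_i\}}\nabla_{\tilde x}\Pi^{\tilde x_i}+p_i\nabla_\mu\Pi^{\tilde x_i}$ and $\delta\Pi^{\eta_i,x}=p_i\nabla_\mu\Pi^{x,\tilde x_i}$, which are exactly your identities when $\cL_{(\xi,\eta)}$ is finitely supported. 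These test directions are $\sigma(\xi)$-measurable, so they only determine $\dbE[D\cU(\xi)\,|\,\xi]$. The paper therefore imports the Wu--Zhang result that the G\^ateaux derivative of a law-invariant lift has the form $\partial_\mu\cU(\cL_\xi,\xi)$. It then reaches general $(\xi,\eta)$ by discrete approximation together with continuity of $\psi$ and $U$.

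\textbf{What your route buys.} Your superposition is an integral against $\cL_{(\xi,\eta)}$, valid at every $\xi$ and for every $\eta\in\dbL^2(\cF^1_0)$. It gives $\dbE[D\cU(\xi)\eta]=\dbE[\psi(x,\cL_\xi,\xi)\eta]$ for all $\eta$, hence $D\cU(\xi)=\psi(x,\cL_\xi,\xi)$ directly. So you need neither the external lemma nor the approximation step.

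\textbf{What it costs.} You must do the measure-theoretic work that the finite sum avoids:
\begin{itemize}
\item joint measurability and $L^2$-continuity in $(\tilde x,\Delta\tilde x)$ of the solution maps;
\item the substitution $\tilde x=\xi$, writing $\nabla_{\tilde x}X^{\xi,\eta}$ for the substituted process. This is legitimate because $(\xi,\eta)$ is independent of $B$. The same independence yields $\cL_{(X^\xi_t,\nabla_{\tilde x}X^{\xi,\eta}_t)|\cF^0_t}=\int\cL_{(X^{1,\tilde x}_t,\nabla_{\tilde x}X^{\tilde x,\Delta\tilde x}_t)|\cF^0_t}\,\cL_{(\xi,\eta)}(d\tilde x,d\Delta\tilde x)$, which is the identity that makes your bookkeeping of the copies close;
\item (stochastic) Fubini in the parameter, justified by $\|\nabla_\mu\Pi^{\tilde x}\|_2\le C|\Delta\tilde x|$ uniformly in $\tilde x$.
\end{itemize}
All of this is routine. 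The final upgrade (continuity of $\psi$ plus integration along $\xi+\theta\eta$) is the same in both proofs.

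\textbf{One correction to your Step 1.} A contraction run directly in $L^p$ via BDG only closes on a horizon $\varepsilon_p$ that shrinks as $p$ grows. But \eqref{Lpest2} is claimed for all $T\le\varepsilon_0$ with $\varepsilon_0$ independent of $p$, and linearity alone does not give this. You need an extra argument, such as the interval-by-interval propagation through a bounded decoupling field used in Step 2 of the appendix proof of Theorem \ref{thm-pax}. That is the argument the paper appeals to.
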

\proof  The existence of  $\e_0>0$ such that the FBSDE systems are well-posed on $[0,T]$ whenever $T\leq \e_0$ follows from the standard FBSDE theory and its extension to McKean-Vlasov equations. Next, for $T\le \e_0$ and for a generic constant $C>0$ depending on the same parameters, applying standard FBSDE estimates on \eqref{tdxFBSDE}, \eqref{tdmuFBSDE}, \eqref{tdmuFBSDE2} we have: assuming $|\D \tilde x|= 1$,
\beaa
& \dis\dbE\Big[\sup_{0\le t\le T} |\td_{\tilde x} X^{\tilde x}_t|^2\Big] \le 1+ CT,\q \dbE\Big[\sup_{0\le t\le T} |\td_{\tilde x} Y^{\tilde x}_t|^2 + \int_0^T |\td_{\tilde x} Z^{\tilde x}_t|^2dt\Big] \le L_g^2 + CT;\\
& \dis\dbE\Big[\sup_{0\le t\le T} |\td_{\mu} X^{\tilde x}_t|^2\Big] \le CT, \q \dbE\Big[\sup_{0\le t\le T} |\td_{\mu} Y^{\tilde x}_t|^2 + \int_0^T |\td_{\mu} Z^{\tilde x}_t|^2dt\Big] \le L_g^2 + CT;\\
& \dis \dbE\Big[\sup_{0\le t\le T} |\td_{\mu} X^{x,\tilde x}_t|^2\Big] \le CT, \q \dbE\Big[\sup_{0\le t\le T} |\td_{\mu} Y^{x,\tilde x}_t|^2 + \int_0^T |\td_{\mu} Z^{x,\tilde x}_t|^2dt\Big] \le L_g^2 + CT.
\eeaa
In particular, $|\td_{\mu} Y^{x,\tilde x}_0|\le R_0$ when $T\le \e_0$ is small enough. Thus, given  \reff{pamuUrep}, we have $|\pa_{\mu}U(0,x,\cL_{\xi},\tilde x)|\le R_0$. The estimate \reff{Lpest2} follows from similar arguments as those for \reff{Lpest1}, and  given  \reff{pamuUrep}, the continuity of $\pa_\mu U(0, \cd, \cd, \cd)$ follows from the stability of the FBSDEs. We now prove \reff{pamuUrep} in three steps.

\ms
{\bf Step 1.} For any $\xi,\eta\in \mathbb{L}^2(\mathcal{F}^1_0,\dbR^{d})$, following standard arguments and by the stability  of the involved FBSDE systems we have
\begin{eqnarray}
\label{Xetaconv}
\left.\ba{c}
\dis\lim_{\varepsilon\rightarrow0}\mathbb{E}\Big[\sup_{0\leq t\leq T}\Big|\frac{1}{\varepsilon}[X^{\xi+\varepsilon\eta}_t-X^{\xi}_t]
-\delta X^{\eta}_t\Big|^2+ \sup_{0\leq t\leq T}\Big|\frac{1}{\varepsilon}[X^{\xi+\varepsilon\eta,x}_t-X^{\xi,x}_t]
-\delta X^{\eta,x}_t\Big|^2\Big]=0,\ms\\
\dis\lim_{\varepsilon\rightarrow0}\mathbb{E}\Big[\sup_{0\leq t\leq T}\Big|\frac{1}{\varepsilon}[Y^{\xi+\varepsilon\eta}_t-Y^{\xi}_t]
-\delta Y^{\eta}_t\Big|^2+ \sup_{0\leq t\leq T}\Big|\frac{1}{\varepsilon}[Y^{\xi+\varepsilon\eta,x}_t-Y^{\xi,x}_t]
-\delta Y^{\eta,x}_t\Big|^2\Big]=0,
\ea\right.
\end{eqnarray}
where $\d \Pi^{\eta}:=(\delta X^\eta,\delta Y^{\eta},\delta Z^{\eta})$  satisfies the following linear McKean-Vlasov FBSDEs, obtained by formally differentiating \reff{Xxi} in $\xi$ along the direction $\eta$:
\begin{eqnarray}\label{tdXeta}
\begin{cases}
\dis \delta X^{\eta}_t=\eta+\int^t_{0}\dbE_s \big[\nabla b_1(\Xi^\xi_s; \d \Pi^{\eta}_s; \tilde X^\xi_s, \widetilde{\d X_s^{\eta}})\big]ds +\int^t_{0} \dbE_s \big[\nabla \sigma_1(\Xi^\xi_s;\d \Pi^{\eta}_s; \tilde X^\xi_s, \widetilde{\d X_s^{\eta}})\big]dB_s,\ms\\
\dis \delta {Y}^{\eta}_t=\dbE_T\big[\td  g_1(\Xi^\xi_T; \d \Pi^{\eta}_T; \tilde X^\xi_T, \widetilde{\d X_T^{\eta}})\big] +\int^{T}_{t}\dbE_s \big[\nabla f_1(\Xi^\xi_s;\d \Pi^{\eta}_s; \tilde X^\xi_s, \widetilde{\d X_s^{\eta}})\big]ds -\int^T_s\delta {Z}^{\eta}_sdB_s,
\end{cases}
\end{eqnarray}
and $\d \Pi^{ \eta,x}:=(\delta X^{\eta,x},\delta Y^{\eta,x},\delta Z^{\eta,x})$ is obtained by formally differentiating \reff{X2x} in $\xi$ along $\eta$:
\begin{eqnarray}\label{tdXxeta}
\begin{cases}
\dis \delta X^{\eta,x}_t=\int^t_{0}\dbE_s \big[\nabla b_2(\Xi^{2, x}_s;\d \Pi^{\eta,x}_s;\tilde X_s^\xi,\widetilde{\d X_s^{\eta}})\big]ds +\int^t_{0} \dbE_s \big[\nabla \sigma_2(\Xi^{2, x}_s;\d \Pi^{\eta,x}_s; \tilde X_s^\xi,\widetilde{\d X_s^{\eta}})\big]dB_s,\ms\\
\dis \delta {Y}^{\eta,x}_t=\dbE_T\big[\nabla g_2(\Xi^{2, x}_T;\d \Pi^{\eta,x}_T; \tilde X_T^\xi,\widetilde{\d X_T^{\eta}})\big]  -\int^T_s\delta {Z}^{\eta,x}_sdB_s\\
\dis\qq\qq +\int^{T}_{t}\dbE_s \big[\nabla f_2(\Xi^{2, x}_s;\d \Pi^{\eta,x}_s; \tilde X_s^\xi,\widetilde{\d X_s^{\eta}})\big]ds.
\end{cases}
\end{eqnarray}
Here we used \reff{tdPhi1} with $m=1$. In particular, \reff{Xetaconv} implies
\bea
\label{pamuUrep1}
\lim_{\varepsilon\rightarrow0}\left|\frac{1}{\varepsilon}[U(0,x,\mathcal{L}_{\xi+\varepsilon\eta})-U(0,x,\mathcal{L}_{\xi})]
-\delta {Y}^{\eta,x}_0\right|^2=0.
\eea
Now fix $x$ and lift $U$  to a function $\cU:\dbL^2(\cF_0, \dbR^{d_x^1})\to \dbR^{d^2_y}$:
\bea
\label{lift}
\cU(\xi) := U(0,x, \cL_\xi).
\eea
 Then \reff{pamuUrep1} implies that $\cU$ has the Gateaux derivative $D \cU(\xi) \in \dbL^2(\cF_0, \dbR^{d_y^2\times d_x^1})$ such that
\begin{eqnarray}\label{pamuUrep2}
 \mathbb{E}\big[D\cU(\xi)\eta\big]=\delta Y^{\eta,x}_0.
\end{eqnarray}

\ms
{\bf Step 2.} In this step we assume $\xi$  is discrete: $p_i=\mathbb{P}(\xi=\tilde x_i)$, $i=1,\cdots, n$, and show that
\bea
\label{pamuUrep-claim}
\d \Pi^{ \eta_i,x} = p_i  \td_\mu \Pi^{x,\tilde x_i}, \q\mbox{where}\q \eta_i :=  {\bf 1}_{\{\xi=\tilde x_i\}}\Delta \tilde x, \q  i=1,\cds, n.
\eea
First, note that \reff{X1x} is a standard FBSDE with parameter $\cL_{X^\xi|\dbF^0}$. By \reff{Xxi} and \reff{X1x}, one can easily see that
\bea
\label{Pixidecompose1}
\Xi^\xi = \sum_{i=1}^n \kappa_i \Xi^{1,\tilde x_i},\q \mbox{where}\q \kappa_i:={\bf 1}_{\{\xi=\tilde x_i\}}.
\eea
 This implies that, for $\Phi = b_1, \si_1$, since $\{\xi=\tilde x_i\}_{1\le i\le n}$ form a partition of $\O$,
\bea
\label{etapartition}
\left.\ba{lll}
\dis \kappa_i \pa_\pi \Phi(\Xi^\xi)\nabla_{\tilde x}\Pi_s^{\tilde x_i} = \sum_{j=1}^n \kappa_i\kappa_j \pa_\pi \Phi(\Xi^{1, \tilde x_j}_s) \td_{\tilde x} \Pi^{\tilde x_i}_s =\kappa_i \pa_\pi \Phi(\Xi^{1, \tilde x_i}_s) \td_{\tilde x} \Pi^{\tilde x_i}_s;\\
\dis  \dbE_s \big[\pa_{\mu} \Phi(\Xi^\xi_s, \tilde X^\xi_s)\widetilde {\kappa_i} \widetilde{\td_{\tilde x} X_s^{\tilde x_i}}\big] =  \sum_{j=1}^n \dbE_s \big[\pa_{\mu} \Phi(\Xi^\xi_s, \tilde X^{1,\tilde x_j}_s)\widetilde {\kappa_i} \widetilde {\kappa_j} \widetilde{\td_{\tilde x} X_s^{\tilde x_i}}\big]\\
\dis = \dbE_s\big[\pa_{\mu} \Phi(\Xi^\xi_s, \tilde X^{1,\tilde x_i}_s)\widetilde {\kappa_i}  \widetilde{\td_{\tilde x} X_s^{\tilde x_i}}\big] = p_i \dbE_s\big[\pa_{\mu} \Phi(\Xi^\xi_s, \tilde X^{1,\tilde x_i}_s) \widetilde{\td_{\tilde x} X_s^{\tilde x_i}}\big],
\ea\right.
\eea
where the last equality used the fact that $(\Xi^\xi, \tilde X^{1,\tilde x_i}, \widetilde{\td_{\tilde x} X^{\tilde x_i}})$ are conditionally independent of $\widetilde\kappa_i$, conditional on $\dbF^0$, and  $\dbE_s[\tilde \k_i]=\dbE[\tilde \kappa_i] =p_i$.

Next, denote $\d \Pi^{'i}:= \kappa_i \td_{\tilde x} \Pi^{\tilde x_i} + p_i \td_\mu \Pi^{\tilde x_i}$. Then, in light of \reff{tdxFBSDE}, \reff{tdmuFBSDE}, and \reff{tdXeta}, by \reff{tdPhi1} and \reff{etapartition}  we have, again for $\Phi = b_1, \si_1$,
\beaa
&&\dis \kappa_i \pa_\pi \Phi (\Xi^{1,\tilde x_i}_s) \td_{\tilde x} \Pi^{\tilde x_i}_s  + p_i\dbE_s \big[ \td \Phi (\Xi^\xi_s; \td_\mu \Pi^{\tilde x_i}_s; \widetilde\Upsilon^{\tilde x_i}_s)\big]\\
&=&\dis  \kappa_i \pa_\pi \Phi (\Xi^{\xi}_s) \td_{\tilde x} \Pi^{\tilde x_i}_s  + p_i\pa_\pi \Phi (\Xi^{\xi}_s)\td_\mu \Pi^{\tilde x_i}_s + p_i\dbE_s \big[ \pa_{\mu} \Phi (\Xi^\xi_s, \tilde X^{1,\tilde x_i}_s)\widetilde{\td_{\tilde x}  X^{\tilde x_i}_s} +\pa_{\mu} \Phi (\Xi^\xi_s, \tilde X^{\xi}_s)\widetilde{\td_\mu X^{\tilde x_i}_s}\big]\\
&=& \dis \pa_\pi \Phi (\Xi^{\xi}_s) \d \Pi^{'i}_s  + \dbE_s\big[\pa_{\mu} \Phi(\Xi^\xi_s, \tilde X^\xi_s)\widetilde {\kappa_i} \widetilde{\td_{\tilde x} X_s^{\tilde x_i}}\big] + p_i\dbE_s\big[\pa_{\mu} \Phi(\Xi^\xi_s, \tilde X^{\xi}_s)\widetilde{\td_\mu X^{\tilde x_i}_s}\big]\\
&=&\dis  \pa_\pi \Phi (\Xi^{\xi}_s; \d \Pi^{'i}_s)  +  \dbE_s\big[\pa_{\mu} \Phi(\Xi^\xi_s, \tilde X^\xi_s)\widetilde{\d X_s^{'i}}\big] = \dis   \dbE_s \big[\td \Phi(\Xi^\xi_s;  \d \Pi^{'i}_s; \tilde X^\xi_s, \widetilde{\d X_s^{'i}})\big].
\eeaa
This implies  that
\beaa
\dis d \d X^{'i}_s &=& \kappa_i d \td_{\tilde x} X^{\tilde x_i}_s + p_i d \td_\mu X^{\tilde x_i}_s \\
\dis &=& \Big[\kappa_i \pa_\pi b_1 (\Xi^{1,\tilde x_i}_s) \td_{\tilde x} \Pi^{\tilde x_i}_s  + p_i\dbE_s \big[ \td b_1 (\Xi^\xi_s; \td_\mu \Pi^{\tilde x_i}_s; \widetilde\Upsilon^{\tilde x_i}_s)\big]\Big]ds\\
&&+ \Big[\kappa_i \pa_\pi \sigma_1 (\Xi^{1,\tilde x_i}_s)\td_{\tilde x} \Pi^{\tilde x_i}_s  + p_i\dbE_s \big[ \td \sigma_1 (\Xi^\xi_s; \td_\mu \Pi^{\tilde x_i}_s; \widetilde\Upsilon^{\tilde x_i}_s)\big]\Big]dB_s\\
\dis &=&  \dbE_s\big[\td b_1(\Xi^\xi_s;  \d \Pi^{'i}_s; \tilde X^\xi_s, \widetilde{\d X_s^{'i}})\big]ds +\dbE_s\big[\td \sigma_1(\Xi^\xi_s;  \d \Pi^{'i}_s; \tilde X^\xi_s, \widetilde{\d X_s^{'i}})\big]dB_s.
\eeaa
Similarly we can show that
\beaa
 d\delta {Y}^{'i}_s=\dbE_s \big[\td f_1(\Xi^\xi_s; \d \Pi^{'i}_s; \tilde X^\xi_s, \widetilde{\d X_s^{'i}})\big]ds+\delta Z_s^{'i}dB_s,\q \delta Y^{'i}_T=\dbE_T\big[\td g_1(\Xi^\xi_T; \d \Pi^{'i}_T; \tilde X^\xi_T, \widetilde{\d X_T^{'i}})\big].
\eeaa
Moreover, $\delta X^{'i}_0=\kappa_i \td_{\tilde x} X^{\tilde x_i}_0 + p_i \td_\mu X^{\tilde x_i}_0 =  \kappa_i\widetilde{\Delta  x}$. So $\d \Pi^{'i}$ satisfies \reff{tdXeta} with $\eta=\eta_i=\kappa_i\widetilde{\Delta x}$. Then, by the uniqueness of the FBSDE we have $\d \Pi^{'i} = \d \Pi^{\eta_i}$. That is,
\bea
\label{Pixidecompose2}
\d \Pi^{\eta_i} = \kappa_i \td_{\tilde x} \Pi^{\tilde x_i} + p_i \td_\mu \Pi^{\tilde x_i}.
\eea

Finally, for $\Phi = b_2, \si_2$, similarly to \reff{etapartition} we have
\beaa
 \dbE_s \big[\pa_{\mu} \Phi(\Xi^{2,x}_s, \tilde X^\xi_s)\widetilde {\kappa_i} \widetilde{\td_{\tilde x} X_s^{\tilde x_i}}\big]  = p_i \dbE_s\big[\pa_{\mu} \Phi(\Xi^{2,x}_s, \tilde X^{1,\tilde x_i}_s) \widetilde{\td_{\tilde x} X_s^{\tilde x_i}}\big].
\eeaa
Then, in light of \eqref{tdmuFBSDE2} and \reff{tdXxeta}, by \reff{tdPhi1} and \eqref{Pixidecompose2} we have
\beaa
&&\dbE_s \big[\nabla \Phi(\Xi^{2,x}_s; \d \Pi^{\eta_i,x}_s; \tilde X^\xi_s,\widetilde{\delta X_s^{\eta_i}})\big] \\
&=&\pa_\pi \Phi(\Xi^{2,x}_s) \d \Pi^{\eta_i, x}_s + \dbE_s \big[\pa_{\mu}\Phi(\Xi^{2,x}_s, \tilde X^\xi_s) \tilde\kappa_i\widetilde{\td_{\tilde x} X_s^{\tilde x_i}}\big]+p_i\dbE_s \big[\pa_{\mu}\Phi(\Xi_s^{2,x},\tilde X_s^{\xi})\widetilde{\nabla_{\mu}X_s^{\tilde x_i}}\big]\\
&=& \pa_\pi \Phi(\Xi^{2,x}_s) \d \Pi^{\eta_i, x}_s+ p_i\dbE_s \Big[\pa_{\mu}\Phi(\Xi^{2,x}_s, \tilde X^{1,\tilde x_i}_s) \widetilde{\td_{\tilde x} X_s^{\tilde x_i}}+\pa_{\mu}\Phi(\Xi_s^{2,x},\tilde X_s^{\xi})\widetilde{\nabla_{\mu}X_s^{\tilde x_i}}\Big].
\eeaa
Thus, denoting $\d\Pi^{'x,i}:= {1\over p_i}\d \Pi^{\eta_i,x}$, again for $\Phi = b_2, \si_2$, we have
\beaa
&&\dbE_s \big[\nabla \Phi(\Xi^{2,x}_s; \d \Pi^{'x,i}_s; \tilde X^\xi_s, \widetilde{\d X_s^{\eta_i}})\big] \\
&=& \pa_\pi \Phi(\Xi^{2,x}_s) \d \Pi^{'x,i}_s+\dbE_s \Big[\pa_{\mu}\Phi(\Xi^{2,x}_s, \tilde X^{\xi,\tilde x_i}_s) \widetilde{\td_{\tilde x} X_s^{\tilde x_i}}+\pa_{\mu}\Phi(\Xi_s^{2,x},\tilde X_s^{\xi})\widetilde{\nabla_{\mu}X_s^{\tilde x_i}}\Big]\\
&=&\dbE_s \big[\nabla \Phi(\Xi^{2,x}_s; \d \Pi^{'x,i}_s; \tilde\Upsilon^{\tilde x}_s)\big].
\eeaa
Similarly one can check the other required equalities, so that $\d\Pi^{'x,i}$ satisfies \reff{tdmuFBSDE2} with $\tilde x = \tilde x_i$. Then by the uniqueness of FBSDE  \reff{tdmuFBSDE2}, we obtain $\d\Pi^{'x,i} = \td_\mu \Pi^{x,\tilde  x_i}$. This verifies \reff{pamuUrep-claim} and hence \reff{pamuUrep-discrete} when $\xi$ is discrete.

\ms
{\bf Step 3.} We now prove \reff{pamuUrep}.  We first consider discrete $\xi$ as in Step 2. By \cite[Proposition 1]{WZ1} or \cite[Theorem 2.2, Step 1]{WZ2}, especially by the arguments there, we see  those results  hold true for Gateaux derivatives and thus there exists a function $\pa_\mu \cU: \cP_2(\dbR^{d_x^1}) \times \dbR^{d_x^1}\to \dbR^{d_y^2\times d_x^1}$ such that
\bea
\label{pamuUrep3}
D\cU(\xi) = \pa_\mu \cU(\cL_\xi, \xi).
\eea
Then, by  \reff{pamuUrep2} and \reff{pamuUrep3} we have, again for fixed $x$,
\beaa
\delta Y^{\eta_i,x}_0 = \mathbb{E}\big[\partial_{\mu} \cU(\cL_{\xi},\xi)\eta_i\big] = p_i \partial_{\mu} \cU(\cL_{\xi},\tilde x_i)\Delta \tilde x.
\eeaa
 Combining with \reff{pamuUrep-claim} and recalling \reff{tdmuFBSDE2} with $\nabla_{\mu}\Pi^{2,\xi, x,\tilde x, \D \tilde x} = \nabla_{\mu}\Pi^{x,\tilde x}$, this implies that
\bea
\label{pamuUrep-discrete1}
\partial_{\mu} \cU(\cL_{\xi},\tilde x_i)\Delta \tilde x = \td_\mu Y^{2, \xi, x,\tilde x_i, \D \tilde x}_0 = \sum_{j=1}^{d_x^1} \td_\mu Y^{2, \xi, x,\tilde x_i, \tilde e_j}_0\D \tilde x_j,\q i=1,\cds, n.
\eea
Here the last equality thanks to the fact that the equations \reff{tdxFBSDE}, \reff{tdmuFBSDE}, \reff{tdmuFBSDE2} are all linear.
Therefore, for fixed $x\in\dbR^{d^2_x}$ and for discrete $\xi$, we have
\begin{equation}\label{pamuUrep-discrete}
\partial_{\mu}\cU(\cL_{\xi},\tilde x)= \psi(x, \cL_{\xi}, \tilde x) := (\td_\mu Y^{2, \xi, x,\tilde x, \tilde e_1}_0,\cds, \td_\mu Y^{2, \xi, x,\tilde x, \tilde e_{d^1_x}}_0),\q \text{for $\cL_{\xi}$-a.e. $\tilde x\in\dbR^{d^1_x}$.}
\end{equation}

We next prove \reff{pamuUrep} in the general case. Fix $\xi,\eta\in \mathbb{L}^2(\mathcal{F}_0^1,\dbR^{d_x^1})$. One can easily construct discrete $\xi_n, \eta_n\in \dbL^2(\cF_0^1,\dbR^{d_x^1})$ such that $\dis\lim_{n\to\infty}\dbE\big[|\xi_n-\xi|^2 + |\eta_n-\eta|^2\big]=0$.
Clearly $\xi_n + \th \eta_n$ is also discrete, for any $\th\in [0,1]$. Then it follows from   \reff{pamuUrep3} and \reff{pamuUrep-discrete} that,
\beaa
&\dis U(0, x,  \cL_{\xi_n + \eta_n}) - U(0, x,  \cL_{\xi_n }) = \int_0^1 {d\over d\th} \cU(\xi_n +\th \eta_n) d\th = \int_0^1 \dbE\big[\pa_{\mu} \cU( \cL_{\xi_n +\th \eta_n},  \xi_n + \th \eta_n) \eta_n \big] d\th\\
&\dis = \int_0^1 \dbE\big[ \psi(x,\cL_{\xi_n +\th \eta_n},  \xi_n + \th \eta_n) \eta_n \big] d\th,
\eeaa
for any fixed $x\in \dbR^{d^2_x}$ and $n\ge 1$.
By the stability of the FBSDEs, it is clear that $\psi$ is continuous in all its variables. Since $U$ is also continuous, then, by sending $n\to\infty$ we obtain
\beaa
 U(0, x,  \cL_{\xi + \eta}) - U(0, x,  \cL_{\xi}) =  \int_0^1 \dbE\big[\psi(x,\cL_{\xi +\th \eta},  \xi + \th \eta )\eta\big] d\th.
\eeaa
Again since $\psi$ is continuous, we have
\beaa
 U(0, x,  \cL_{\xi + \eta}) - U(0, x,  \cL_{\xi}) = \dbE \big[\psi(x,\cL_{\xi},  \xi )\eta\big] + o(\|\eta\|_2).
\eeaa
This means that $U$ is differentiable in $\mu$ with
\beaa
 \pa_{\mu} U(0, x,  \cL_{\xi}, \tilde x) = \psi(x, \cL_{\xi}, \tilde x),  \q\mbox{for $\cL_{\xi}$-a.e. $\tilde x\in\dbR^d$}.
  \eeaa
Finally, for $\tilde x\notin \supp(\cL_{\xi})$, since $ \pa_{\mu} U(0, x,  \cL_{\xi}, \tilde x)$ can be defined arbitrarily, we may naturally use $\psi(x, \cL_{\xi}, \tilde x)$ to define it, then we obtain \reff{pamuUrep} for all $(x, \cL_{\xi}, \tilde x)\in\dbR^{d_x^2}\times\cP_2(\dbR^{d_x^1})\times\dbR^{d_x^1}$. \qed

\begin{rem}
\label{rem-W2Lip}
 We note that the boundedness of $\pa_\mu U$ implies $U$ is Lipschitz continuous in $\mu$ under $W_1$, which is stronger than the Lipschitz continuity under $W_2$. For future purpose, especially when one considers global well-posedness of the MFG master equations, we emphasize that, in the above theorem and in all the results in this paper, the $\e_0$ actually depends only on the $W_2$-Lipschitz constant of $g_i$, rather than their $W_1$-Lipschitz constant.
\end{rem}

\section{Bootstrap method for higher order regularities}
\label{sect-bootstrap}
\setcounter{equation}{0}
We now establish the higher order regularity of $U$ by bootstrap arguments, provided the coefficients are sufficiently smooth, recalling the parabolic order  and the space $\cC^k_b$   in Section \ref{sect-setting}:
\bea
\label{assum-Ckb}
\Phi_i \in \cC^{k}_b~\mbox{for some $k\ge 2$, $\Phi=b, \si, f, g$, $i=1,2$.}
\eea
Note that, unlike in Assumption \ref{assum-1}, here the derivatives of $\Phi_i$ are locally uniformly continuous, in particular are continuous in $t$. We shall always assume $T\le \e_0$ for some $\e_0$ sufficiently small.
We emphasize that $\e_0$ depends only on the bounds of the first order derivatives, not on the bounds of the higher order derivatives in \reff{assum-Ckb}. We proceed in three steps and will summarize the results in the end of this section.

{\bf Step 1.} In this step we study the higher order derivatives of $U$ with respect to $x$. We assume the coefficients are sufficiently smooth in $x$ with uniform bounded derivatives.

First, given $\xi\in \dbL^2(\cF^1_0, \dbR^{d^1_x})$, denote $\bm_t := \cL_{X^\xi_t|\cF^0_t}$, and for $\Phi= b, \si, f$, $\Phi^\bm_t(x, y, z) := \Phi_2(t,x,y,z, \bm_t)$, $g^\bm(x) := g_2(x, \bm_T)$. We note that $\|\bm_t\|_2 \le C[1+\|\mu\|_2]$. It is clear that, under Assumption \ref{assum-1},  $(b^\bm,\si^\bm, f^\bm, g^\bm)$ satisfy all the requirements in Assumption \ref{assum-0}, in particular they are $\dbF^0$-progressively measurable and  $I_0 \le C[1+\|\mu\|_2]$ for the $I_0$ in Assumption \ref{assum-0} (i).  Then, with these coefficients,  FBSDEs \reff{X2x} and \reff{FBSDEx} are the same, and thus the function $u$ in Theorem \ref{thm-pax}  identifies with $U$ defined by \reff{Uxmu}: $u_0(x)  = U(0,x,\mu)$. Applying Theorem \ref{thm-pax}, this implies that $\pa_xU(0,x,\mu) = \pa_x u_0(x)$ exists, is  locally uniformly continuous with representation, and is bounded by $R_0:= L_{g}+1$.

Next, by using the notations in \reff{FBSDEx}-\reff{tdFBSDEx}, denote
\beaa
& x' = (x^{'1}, x^{'2}) := (x, \D x),\q \Pi' = (\Pi^{'1}, \Pi^{'2}) := (\Pi, \td_x \Pi), \q  \pi' = (\pi^{'1}, \pi^{'2}) := (\pi, \td_x \pi),\\ &\Phi'_t(\pi') = (\Phi^{'1}_t(\pi'), \Phi^{'2}_t(\pi')) := (\Phi_2^{\bm}(\pi), \pa_\pi \Phi^{\bm}_{2,t}(\pi) \td_x\pi),\q \Phi = b, \si, f, g.
\eeaa
Clearly $\Pi'$ satisfies  \reff{FBSDEx} with coefficient $\Phi'$. Then formally we may derive the SDE corresponding to \reff{tdFBSDEx} for $\td_{x'} \Pi^{'x', \D x'} = (\td_{x'} \Pi^{'1, x', \D x'} , \td_{x'} \Pi^{'2, x', \D x'} )$. Note that \reff{FBSDEx} and \reff{tdFBSDEx} are decoupled, then one can easily verify that the first component of $\td_{x'}\Pi'$ identifies with the second component of $\Pi'$: $\td_{x'} \Pi^{'1, x', \D x'} =\Pi^{'2, x', \D x'}= \td_x\Pi^{x^{'1}, \D x^{'1}}$, where the last one is the solution to  \reff{tdFBSDEx} with original coefficients $\Phi^\bm_2$. Moreover, for $i, j=1,\cds, d_x^2$, we have
\bea
\label{paxij}
\pa_{x_j x_i} U(0,x,\mu) = \pa_{x_j x_i} u_0(x) = \td_{x'} Y^{'2, (x, e_i), (e_j, 0)},
\eea
which provides a representation formula for the second order derivatives $\pa_{xx} U$.

There is one technical issue in the above argument: the coefficients $\Phi^{'2}_t(\pi') := \pa_\pi \Phi^{\bm}_{2,t}(\pi^{'1}) \pi^{'2}$ are not Lipschitz continuous in $\pi'$, more precisely not in $\pi^{'1}$. This can be easily overcome by the decoupling structure of the SDEs \reff{FBSDEx} and \reff{tdFBSDEx}. Indeed, when considering the equation for $\td_{x'}\Pi^{'2}$, the process $\td_{x'}\Pi^{'1} = \Pi^{'2}=\td_x \Pi $ is already obtained, and thus the Lipschitz continuity in $\pi^{'1}$ is not needed. To be precise, fix $i, j$ and denote $\td_{x'} \Pi' = \td_{x'} \Pi^{'(x, e_i), (e_j, 0)}$. Then we have
\begin{eqnarray}\label{tdxxFBSDE}
\begin{cases}
                         \dis  \td_{x'} X^{'2}_t= \int_{0}^t \big[\langle \pa_{\pi\pi} b_s(\Pi^x_s), ( \td_x \Pi^{x,e_i}_s, \td_x \Pi^{x,e_j}_s)\rangle +\pa_\pi b_s(\Pi^x_s) \td_{x'} \Pi^{'2}_s\big] ds \ms\\
       \dis \q                 +\int_0^t  \big[\langle \pa_{\pi\pi} \si_s(\Pi^x_s), ( \td_x \Pi^{x,e_i}_s, \td_x \Pi^{x,e_j}_s)\rangle +\pa_x\si_s(\Pi^x_s) \td_{x'} X^{'2}_s + \pa_y \si_s(\Pi^x_s) \td_{x'} Y^{'2}_s \big] dB_s,\ms\\
                         \dis  \td_{x'} Y^{'2}_t=  \big[\langle \pa_{x} g(X^x_T), ( \td_x X^{x,e_i}_T, \td_xX^{x,e_j}_T)\rangle +\pa_x g(X^x_T) \td_{x'} X^{'2}_T\big]\ms\\
                         \dis\q +\int_{t}^T \big[\langle \pa_{\pi\pi} f_s(\Pi^x_s), ( \td_x \Pi^{x,e_i}_s, \td_x \Pi^{x,e_j}_s)\rangle +\pa_\pi f_s(\Pi^x_s) \td_{x'} \Pi^{'2}_s\big] ds - \int_t^T \td_{x'} Z^{'2}_s dB_s,
                 \end{cases}
\end{eqnarray}
where the bilinear operators $\langle \pa_{\pi\pi} b, ( \td_x \Pi, \td_x \Pi)\rangle$ etc. are in obvious sense.  Note that the coefficients $\pa_\pi \Phi$ of the unknowns $\td_{x'} \Pi^{'2}$ are bounded by $L_1$. Then by  \reff{Lpest1} we see that, for the same $\e_0$ in Theorem \ref{thm-pax} and for any $T\le \e_0$, the FBSDE \reff{tdxxFBSDE} is well-posed and for any $p\ge 2$,
\bea
\label{Lpest3}
 \|\td_{x'}\Pi'\|_p \le C_p \|\td_x \Pi\|_{2p}^2 \le C_p<\infty.
 \eea
 We emphasize again that, the $\e_0$ depends only on the bound $L_1$  in Theorem \ref{thm-pax}, but not on the bounds of the higher order derivatives. The $C_p$ in \reff{Lpest3}, however, relies on the latter bounds.

Following similar arguments, we can show that $U$ has bounded and  locally uniformly continuous higher order derivatives in $x$, provided that the coefficients are sufficiently smooth in $x$ with bounded derivatives.

{\bf Step 2.} We now study the further derivatives of $\pa_\mu U$. Before that, we note that since we now assume in \reff{assum-Ckb} the local uniform regularity of the coefficients, we can show that $\pa_\mu U(0,\cd,\cd,\cd)$ is also locally uniformly continuous, following similar arguments as in Step 3 in the proof of Theorem \ref{thm-pax}, see Section \ref{sect-appendix} below. 

To study the further derivatives of $\pa_\mu U$, the main idea is to rewrite the representation of $\pa_\mu U$ in the form of FBSDEs \reff{Xxi}-\reff{X2x}, with increased dimensions. First, recall the FBSDEs \reff{Xxi}, \reff{X1x}, \reff{tdxFBSDE}, \reff{tdmuFBSDE}, and note that the representation of $\pa_\mu U$ involves the distribution of $\Upsilon^{\tilde x}:=\big((X^{1, \tilde x},X^\xi),(\nabla_{\tilde x}X^{\tilde x},\nabla_{\mu}X^{\tilde x})\big)$. Then, for $\xi' = (\xi^{1}, \xi^{2}, \xi^{3}, \xi^{4})\in \dbL^2(\cF_0, \dbR^{4d^1_x})$, let $\Pi^{'\xi'}$, $\Xi^{'\xi'}=(\Pi^{'\xi'}, \cL_{X^{'\xi'}|\dbF^0})$ correspond to these four FBSDEs, with the components in the order of  \reff{Xxi}, \reff{X1x}, \reff{tdxFBSDE}, \reff{tdmuFBSDE}, but with initial condition $X^{'\xi'}_0 = \xi'$. That is, they solve \reff{Xxi} with the following coefficients $\Phi'_1$: for $\Phi = b, \si, f, g$, and for $\zeta' = (\pi^1, \pi^2, \pi^3, \pi^4, \mu')$, where $\mu'\in \cP_2(\dbR^{4d^1_x})$,
\bea
\label{Phi1'}
\Phi'_1(\zeta') = \Big(\Phi_1(\pi^1, \mu^1), ~\Phi_1(\pi^2, \mu^1), ~\pa_\pi \Phi_1(\pi^2, \mu^1) \pi^3, ~ \int_{\dbR^{4d^1_x}}\td \Phi_1(\pi^1, \mu^1; \pi^4, \tilde x') \mu'(d\tilde x')\Big).
\eea
Here $\mu^1$ denotes the first marginal of $\mu'$, and $\tilde x' = (\tilde x^1, \tilde x^2, \tilde x^3, \tilde  x^4)$.

 Next, given $\xi'$ and $x'=(x, \tilde x)\in \dbR^{d_x^2}\times \dbR^{d_x^1}$, let  $\Pi^{'2,\xi', x'}$, $\Xi^{'2,\xi',x'}=(\Pi^{'2,\xi',x'}, \cL_{X^{'\xi'}|\dbF^0})$ correspond to \reff{X2x} and  \reff{tdmuFBSDE2}, but with initial condition $X^{'2,\xi',x'}_0 = x'$. That is, they solve \reff{X2x} with the following coefficients $\Phi'_2$: for $\Phi = b, \si, f, g$, and for $\zeta' = (\pi^1, \pi^2, \mu')$,
\bea
\label{Phi2'}
\Phi'_2(\zeta') = \Big(\Phi_2(\pi^1, \mu^1), ~ \int_{\dbR^{4d^1_x}}\td \Phi_2(\pi^1, \mu^1; \pi^2, \tilde x') \mu'(dx')\Big).
\eea
Then as in \reff{Uxmu} we may define
\bea
\label{Uliftrep}
U'(0, \mu', x') := Y^{'2,\xi',x'}_0, \q \mu'\in \cP_2(\dbR^{4d_x^1}), x'\in \dbR^{d_x^2\times d_x^1},~ \cL_{\xi'} = \mu'.
\eea
In particular, for the $U$ in \reff{Uxmu}, we have
\bea
\label{U-Ulift}
\big(U, \pa_{\mu_i} U\big)(0, x, \mu, \tilde x) = U'(0, \cL_{\xi'}, x'),\q \mbox{where}\q \xi' = (\xi, \tilde x, \tilde e_i, 0), ~ x' = (x, \tilde x).
\eea

Following the arguments in Step 1 and by applying Theorem \ref{thm-pamuUrep}, we can easily derive the representations for $\pa_{x'}U'(0, \mu', x')$ and $\pa_{\mu'} U'(0, \mu', x')$. One technical difference is that, in \reff{Phi1'} the third component $\pa_\pi \Phi_1(\pi^2, \mu^1) \pi^3$ is not Lipschitz continuous in $(\pi^2, \mu^1)$ and, by \reff{tdPhi1}, the fourth component is not Lipschitz continuous in $(\pi^1, \mu^1, \tilde x'_1, \tilde x'_2)$; and in \reff{Phi2'}  the second component is not Lipschitz continuous in $(\pi^1, \mu^1, \tilde x'_1, \tilde x'_2)$. However, due to the decoupling structure, following similar arguments as in Step 1, especially  \reff{tdxxFBSDE} and \reff{Lpest3}, we can obtain the desired estimates. Moreover, let $U^{'2}$ denote the second component of $U'$. By \reff{U-Ulift} we have $\pa_{\mu_i} U = U^{'2}$ and thus
\bea
\label{papamuU}
&&\pa_x\pa_{\mu_i} U(0, x, \mu, \tilde x) = \pa_{x'_1} U'\big(0, \cL_{(\xi, \tilde x, \tilde e_i, 0)}, (x, \tilde x)\big);\nonumber\\
&& \pa_\mu\pa_{\mu_i} U(0, x, \mu, \tilde x, \bar x) = \pa_{\mu'_1} U'\big(0, \cL_{(\xi, \tilde x, \tilde e_i, 0)}, (x, \tilde x), \bar x\big);\\
&&\pa_{\tilde x}\pa_{\mu_i} U(0, x, \mu, \tilde x) = \pa_{x'_2} U'\big(0, \cL_{(\xi, \tilde x, \tilde e_i, 0)}, (x, \tilde x)\big) +  \pa_{\mu'_2} U'\big(0, \cL_{(\xi, \tilde x, \tilde e_i, 0)}, (x, \tilde x), \tilde x\big).\nonumber
\eea
Here the last one used the fact that $\pa_x (V(\d_x)) = \pa_\mu V(\d_x, x)$ for a smooth function $V$ on $\cP_2(\dbR^d)$.

Moreover, following the same arguments, we may obtain the higher order derivatives of $\pa_\mu U$ with respect to $x, \tilde x, \mu$, as well as the new state variables $\bar x$ etc arising from the higher order derivatives with respect to $\mu$, provided the coefficients are sufficient smooth.

{\bf Step 3.} We now study $\pa_t U$ and its further derivatives. First, by restricting \reff{Xxi} and \reff{X2x} on $[t, T]$ with initial value $X^{t,\xi}_t = \xi\in \dbL^2(\cF_t, \dbR^{d^1_x})$ and $X^{t, 2,x}_t = x\in \dbR^{d^2_x}$, we may define $U(t, x, \cL_\xi) := Y^{t, 2, x}_t$. Similarly, by considering  \reff{Xxi} and \reff{X1x} on $[t, T]$ we may define $U_1(t, \tilde x, \mu):= Y^{t, 1, \tilde x}_t$ for $\tilde x\in \dbR^{d_x^1}$. Note that we may apply Step 1 and Step 2 on $U_1$ as well. Then, for $i=1, 2$, $U_i$ is sufficiently smooth in terms of $x, \tilde x, \mu$. By standard arguments, see e.g. \cite[Section 5.1]{Zhang}, we have
\bea
\label{YU}
Y^\xi_t = U_1(t, X^\xi_t, \cL_{X^\xi_t|\cF^0_t}),\q Y^{1, x}_t = U_1(t, X^{1, x}_t, \cL_{X^\xi_t|\cF^0_t}), \q Y^{2, x}_t = U(t, X^{2, x}_t, \cL_{X^\xi_t|\cF^0_t}).
\eea
Moreover, denoting again $\bm_t := \cL_{X^\xi_t|\cF^0_t}$, for the $u_t(x)$ corresponding to $\Phi^\bm$ as in Step 1, we have $u_t(x) = U(t,x, \bm_t)$. Let $\rho_R$ be as in \reff{rhoR2}. Then, for $|x|, \|\mu\|_2<R$, by \reff{rhoR2} we have
\beaa
&&\big|U(0,x,\mu) - U(t, x, \mu)\big| = \Big| \dbE\big[u_0(x) - u_t(x) + U(t,x, \bm_t) - U(t,x, \mu)\big]\Big|\\
&&\le \rho_R(t) + C\dbE\big[W_2(\bm_t, \mu)\big]\le  \rho_R(t) + C\Big(\dbE\big[X^\xi_t - \xi|^2\big]\Big)^{1\over 2} \le \rho_R(t) + C_R \sqrt{t}.
\eeaa
Similarly, we can show that $U_i$ and their related derivatives in terms of $x, \tilde x, \mu$ are locally uniformly continuous in all variables, including the time variable $t$.

Fix $\e>0$ and $\xi, x, \tilde x$,  for notational convenience we denote
\beaa
& \cX^0:= (X^\xi_t, \cL_{X^\xi_t|\cF^0_t}),\q \cX^1_t :=  (X^{1, \tilde x}_t, \cL_{X^\xi_t|\cF^0_t}),\q  \cX^2_t:= (X^{2, x}_t, \cL_{X^\xi_t|\cF^0_t}),\\
&\Xi^0 := \Xi^\xi,\q \Xi^1 := \Xi^{1,\tilde x},\q \Xi^2 := \Xi^{2, x};\\
&\dis\left.\ba{c}
\dis \cY^{i,\e}_t := Y^{i}_t - U_i(\e, \cX^i_0),\q  \cZ^{i,\e,1}_t :=  Z^{i,1}_t- \partial_xU_i(\e, \cX^i_t)\sigma_i^1(t, \Xi^{i}_t),\\
\dis \cZ^{i,\e, 0}_t := Z^{i,0}_t- \partial_xU_i(\e, \cX^i_t)\sigma_i^0(t, \Xi^{i}_t) - \dbE_t\big[\partial_\mu U_i(\e, \cX^i_t,\tilde X^\xi_t) \sigma_i^{0}(t, \tilde \Xi^\xi_t)\big],
\ea\right. i=0,1,2.
\eeaa
Here, in the last two lines above we used notations $U_0 := U_1$, $U_2:= U$, $\Phi_0:= \Phi_1$ for $\Phi = b, \si, f, g$;
and $Z^i = (Z^{i,0}, Z^{i,1})$ refer to the decomposition corresponding to $(dB^0, dB^1)$, as we do for $\si = (\si^0, \si^1)$.
By applying the It\^{o} formula \reff{Ito} on $U_i(\e, \cd, \cd)$ we have
\beaa
\cY^{i,\e}_t &=& U_i(\e, \cX^i_\e) - U_i(\e, \cX^i_0) + \int_t^\e f_i(\Xi^{i}_s)ds - \int_t^\e Z^{i}_s dB_s\\
&=&\int_t^\e \Big[\partial_x U_i(\e, \cX^i_s) \cdot b_i(s, \Xi^{i}_s) + \frac{1}{2} \partial_{xx} U_i(\e, \cX^i_s) : \sigma_i \sigma_i^\top(s,\Xi^{i}_s) + f_i(\Xi^{i}_s)\Big]ds\\
&&\displaystyle+ \int_t^\e \dbE_s \Big[\partial_\mu U_i(\e, \cX^i_s,\tilde X^\xi_s) \cd b_1(s, \tilde \Xi^\xi_s) + \frac{1}{2} \partial_{\tilde x\mu} U_i(\e, \cX^i_s,\tilde X^\xi_s): \sigma_1 \sigma_1^\top(\tilde \Xi^\xi_s) \\
&&\displaystyle \qquad\q+\partial_{x\mu} U_i(\e, \cX^i_s,\tilde X^\xi_s): \sigma^0_i(s, \Xi^{i}_s) (\sigma_1^0)^\top(s, \tilde \Xi^\xi_s)\\
&&\displaystyle \qquad\q +\frac{1}{2}\partial_{\mu\mu}U_i(\e, \cX^i_s,\tilde X^\xi_s,\bar X^\xi_s) : \sigma_1^0(s, \tilde \Xi^\xi_s)(\sigma_1^0)^\top(s,\bar \Xi^\xi_s)\Big] ds - \int_t^\e \cZ^{i,\e}_s dB_s.
\eeaa
By standard arguments, one can easily see that, for a constant $C>0$ independent of $\e$,
\beaa
\dbE\Big[\sup_{0\le t\le \e} |\cY^{i,\e}_t|^2 + \int_0^\e |\cZ^{i,\e}_t|^2dt\Big] \le C\e^2.
\eeaa
This implies, for $i=0,1,2$,
\beaa
&\dis{1\over \e} \dbE\Big[\int_0^\e |Y^i_t - U_i(\e, \cX^i_0)|dt\Big] \le C\e,\q {1\over \e} \dbE\Big[\int_0^\e |Z^{i,1}_t - \partial_xU_i(\e, \cX^i_t)\cdot\sigma_i^1(t, \Xi^{i}_t)|dt\Big] \le C\sqrt{\e},\\
&\dis {1\over \e} \dbE\Big[\int_0^\e \big|Z^{i,0}_t - \partial_xU_i(\e, \cX^i_t)\cdot\sigma_i^0(t, \Xi^{i}_t) - \dbE_t\big[\partial_\mu U_i(\e, \cX^i_t,\tilde X^\xi_t) \cd \sigma_i^{0}(t, \tilde \Xi^\xi_t)\big]\big|dt\Big] \le C\sqrt{\e}
\eeaa
 Now by the desired regularity of $U_i$ and the coefficients, especially their temporal regularity, we can easily show that, by abusing the notation $x$ and $\tilde x$, for $i=1,2$ and $x\in \dbR^{d_x^i}$,
\beaa
\lim_{\e\to 0} {1\over \e}\big[U_i(0, x,\mu) - U_i(\e, x,\mu)\big] = \lim_{\e\to 0} {1\over \e} \cY^{i,\e}_0 = \lim_{\e\to 0} {1\over \e} \dbE[\cY^{i,\e}_0] = \dbL^i U_i(0, x, \mu),
\eeaa
where, for $i=1,2$,
\bea
\label{LU1}
&&\dbL^i U_i(t, x, \mu) :=  \Big[\partial_x U_i \cdot b^{U_i}_i + \frac{1}{2} \partial_{xx} U_i : \sigma^{U_i}_i (\sigma^{U_i}_i)^\top(s,\Xi^{i,x}_s) + f^{U_i}_i\Big](t,x,\mu)\nonumber\\
&&\displaystyle+ \int_{\dbR^{d_x^1}} \Big[\partial_\mu U_i(t, x, \mu, \tilde x) \cd b^{U_1}_1(t, \tilde x, \mu) + \frac{1}{2} \partial_{\tilde x\mu} U_i(t,x,\mu, \tilde x): \sigma^{U_1}_1 (\sigma^{U_1}_1)^\top(t, \tilde x, \mu) \nonumber\\
&&\displaystyle \qquad\qq+\partial_{x\mu} U_i(t,x,\mu, \tilde x): \sigma^{U_i,0}_i(t,x,\mu) (\sigma_1^{U_1,0})^\top(t,\tilde x, \mu)\Big]\mu(d\tilde x)\\
&&\displaystyle+\frac{1}{2}\int_{\dbR^{d_x^1}\times \dbR^{d_x^1}}\partial_{\mu\mu}U_i(t,x,\mu,\tilde x, \bar x) : \sigma_1^{U_1,0}(t, \tilde x, \mu)(\sigma_1^{U_1,0})^\top(t,\bar x, \mu)\mu(d\tilde x)\mu(d\bar x),\nonumber
\eea
and for $\Phi = b, f$, and $i, j =1,2$,
\bea
\label{LU2}
&&\si^{U_j}_i(t,x,\mu) := \si_i\Big(t,x, U_j(t,x,\mu),~ \mu\Big),\nonumber\\
&& \Phi^{U_j}_i(t,x,\mu):=   \Phi\Big(t,x, U_i(t,x,\mu), \cV^{U_j}_i(t,x,\mu),~ \mu\Big),\nonumber\\
&&\cV_i^{U_j,1} (t,x,\mu) := \partial_xU_j(t,x,\mu)\cdot\sigma_i^{U_j,1}(t, x,\mu),\\
&&\cV_i^{U_j,0} (t,x,\mu) := \partial_xU_j(t,x,\mu)\cdot\sigma_i^{U_j,0}(t, x,\mu) - \int_{\dbR^{d^1_x}}\partial_\mu U_j(t,x,\mu, \tilde x) \cd \sigma_i^{U_j, 0}(t, \tilde x, \mu) \mu(d\tilde x).\nonumber
\eea
This means that $U_i$ has right time derivative at $(0,x,\mu)$:
$
\pa_{t+}U_i(0, x,\mu) = - \dbL^i U_i(0, x, \mu).
$
Similarly, for any $t<T$,  $U_i$ has right time derivative $\pa_{t+}U_i(t, x,\mu) = - \dbL^i U_i(t, x, \mu)$. By Step 1, Step 2,  and the arguments in the beginning of this step,  $\dbL^i U_i(t, x, \mu)$ is continuous in $(t,x,\mu)$. Then $U_i$ is continuously differentiable in $t$:
\bea
\label{patU}
\pa_{t}U_i(t, x,\mu) = - \dbL^i U_i(t, x, \mu).
\eea

Clearly, when the coefficients are sufficiently smooth, by Step 1 and Step 2, the right side above has higher order derivatives in $(x, \mu)$. This implies that $\pa_t U_i$ has higher order derivatives in $(x, \mu)$. Moreover, note that differential operators can commute, provided that the derivatives are continuous. Then, when the coefficients are also continuously differentiable in $t$, we see that $\dbL^i U_i(t, x, \mu)$ is continuous differentiable in $t$. This implies immediately that $U_i$ is twice differentiable in $t$: $\pa_{tt}U_i(t, x,\mu) = - \pa_t \dbL^i U_i(t, x, \mu)$.
Repeat the arguments we see that $U_i$ has higher order derivatives in $t$ as well, provided the coefficients are sufficiently smooth.

Finally, since $U = U_2$, we obtain the desired regularity of $U$. We shall note that its regularity relies on the regularity of $U_1$, since $\dbL^2 U_2$ involves $U_1$ as well.
\qed

We now summarize our results.
\begin{thm}
\label{thm-Ck}
Let Assumption \ref{assum-1} hold and  $T\le \e_0$ for the $\e_0$ specified in Theorem \ref{thm-pamuUrep}. Assume further that \reff{assum-Ckb} holds for some $k\ge 2$. Then $U_1, U_2=U$ are in $\cC^k_b$ and satisfy \reff{patU}.
\end{thm}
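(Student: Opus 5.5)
The plan is an induction on the parabolic order $k$, organizing Steps 1--3 of this section into a closed bootstrap. The point that makes induction possible is that $\e_0$ from Theorem \ref{thm-pamuUrep} depends only on $L_1$ and the dimensions. The base case $k=2$ goes as follows. Theorem \ref{thm-pax}, applied to the frozen coefficients $\Phi^\bm_2$, gives $\pa_x U$. Step 1 with \reff{tdxxFBSDE}--\reff{Lpest3} gives $\pa_{xx}U$. Theorem \ref{thm-pamuUrep} gives $\pa_\mu U$ with the bound \reff{pamuUbound}. Step 2, via the lifted system \reff{Phi1'}--\reff{Uliftrep}, identity \reff{U-Ulift} and \reff{papamuU}, gives $\pa_{x\mu}U$, $\pa_{\tilde x\mu}U$ and $\pa_{\mu\mu}U$. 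Each derivative has a representation as $Y_0$ of a linear FBSDE whose coefficients are bounded by $L_1$ in the unknowns, so the $\dbL^p$ estimates hold on $[0,T]$ for $T\le\e_0$, and boundedness of all derivatives follows. The same reasoning applies verbatim to $U_1$, since it is the decoupling field of \reff{Xxi}--\reff{X1x}, which is again of the form \reff{Xxi}--\reff{X2x} with $\Phi_2$ replaced by $\Phi_1$.

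For the inductive step, suppose the claim holds at order $k-1$ for every pair of systems of the form \reff{Xxi}--\reff{X2x} whose coefficients are in $\cC^{k-1}_b$, with the first-order bounds unchanged. Given coefficients in $\cC^k_b$, the lifted coefficients $\Phi'_1,\Phi'_2$ in \reff{Phi1'}--\reff{Phi2'} are in $\cC^{k-1}_b$ in the appropriate sense: they involve one derivative of $\Phi_i$, multiplied by linear unknowns. Applying the induction hypothesis to $U'$ then gives all derivatives of $\pa_\mu U$ up to order $k-1$, through \reff{papamuU} and its iterates. The same argument applied to $(\Pi,\td_x\Pi)$ as in Step 1 handles the derivatives of $\pa_x U$. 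Mixed derivatives come from iterating these two lifts.

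The main obstacle is that the lifted coefficients are not globally Lipschitz in the old variables (for example, $\pa_\pi\Phi_1(\pi^2,\mu^1)\pi^3$), so the induction hypothesis cannot be applied literally. I would handle this by proving a slightly stronger statement adapted to triangular systems. The hypothesis would be that the coefficients split into blocks that are solved sequentially, with each block Lipschitz with constant $\le L_1$ in its own unknowns, and with polynomial growth in the previously solved blocks. Well-posedness for $T\le\e_0$ and $\dbL^p$ bounds then follow block by block as in \reff{Lpest3}, using higher moments of earlier blocks via \reff{Lpest1} and \reff{Lpest2}. This class is closed under the lifts \reff{Phi1'}--\reff{Phi2'}, which makes the induction consistent.

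Local uniform continuity of the derivatives in $(x,\mu,\tilde x,\dots)$ comes from FBSDE stability together with the local uniform continuity of the coefficient derivatives, as in the appendix argument.

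Finally, for time regularity I use Step 3. Continuity in $t$ comes from \reff{rhoR2} and $W_2(\bm_t,\mu)\le C_R\sqrt t$. The identity \reff{patU}, $\pa_tU_i=-\dbL^iU_i$, gives $\pa_t U_i$; by \reff{LU1}--\reff{LU2} it involves only derivatives of order $\le 2$ of $U_1,U_2$ and zero-order coefficients. Hence $\pa_tU_i$ has $k-2$ further derivatives in $(x,\mu)$ that are continuous and bounded, by the spatial part already proved. Derivatives involving $\pa_t^j$ with $2j+m\le k$ are obtained by repeatedly differentiating \reff{patU} in $t$ and substituting \reff{patU} for $\pa_tU_i$ again; each substitution costs two spatial orders, which is consistent with the parabolic count. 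Commuting the derivatives is justified by their continuity. This yields $U_1,U_2\in\cC^k_b$ together with \reff{patU}.
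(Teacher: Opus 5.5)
Your proposal is correct and is essentially the paper's own argument. It repackages Steps 1--3 of Section \ref{sect-bootstrap}: the $x$-lift to $(\Pi,\td_x\Pi)$, the $\mu$-lift \reff{Phi1'}--\reff{U-Ulift} with \reff{papamuU}, the triangular structure used as in \reff{tdxxFBSDE}--\reff{Lpest3} to get around the non-Lipschitz lifted coefficients, and time regularity via \reff{patU}; your induction over block-triangular systems only makes the paper's ``following similar arguments'' explicit. One caution, which applies equally to the paper's statement: $\dbL^iU_i$ contains the zero-order coefficients $b_i,\si_i,f_i$, and \reff{assum-Ckb} allows these to grow (e.g.\ $d^i_x=d^i_y=1$, $b_i=x$, $\si_i=f_i=0$, $g_i=x$ gives $U(t,x,\mu)=xe^{T-t}$), so derivatives involving $\pa_t$ are continuous but in general not bounded, and your claim that the $(x,\mu)$-derivatives of $\pa_tU_i$ are bounded does not follow; boundedness should be asserted only for the derivatives in $(x,\mu,\tilde x,\dots)$.
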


\section{Extended mean field games}
\label{sect-EMFG}
\setcounter{equation}{0}

We first introduce the extended mean field game (EMFG). Consider the probabilistic setting in the beginning of Section \ref{sect-setting} and let the control set $A$ be a domain in a Euclidian space. The EMFG involves the following data: for certain dimension $d$ and for $\sigma=(\sigma^0, \si^1)$,
\beaa
\left.\ba{c}
\dis b, \b: [0, T]\times \dbR^d\times A \times\cP_2(\dbR^d)\mapsto \dbR^{d_1},\q \sigma^i: [0, T]\times \dbR^d\times\cP_2(\dbR^d)\mapsto \dbR^{d\times d_i},~ i=0,1,\\
\dis f:[0, T]\times \dbR^d\times A \times\cP_2(\dbR^d)\mapsto \dbR,\q g: \dbR^d\times\cP_2(\dbR^d)\mapsto \dbR.
\ea\right.
\eeaa
Again, due to our notational convention, we often omit $t$ inside $b, \b, \si, f$.
Let $\cA$ denote the set of closed-loop controls $\a: [0,T]\times\dbR^d\times \O\mapsto A$ such that $\a$ is $\dbF^0$-progressively measurable, and $\cM$  the set of $\dbF^0$-progressively measurable $\cP_2(\dbR^d)$-valued process $\bm$ with $\int_0^T \|\bm_t\|_2^2dt<\infty$.

Given $\bm\in \cM$,  $\a\in \cA$,  and $\xi\in \dbL^2(\cF_0, \dbR^d)$,  $x\in \dbR^d$, recall the notational convention in the end of Section \ref{sect-setting} and consider the following dynamics, corresponding to the population and the representative  player, respectively:
\bea
\label{EMFG1}
\left.\ba{c}
\dis X^{\mathbf{m}; \xi, \a}_t =  \xi+\int_{0}^{t}\sigma^1 b\big(X^{\mathbf{m}; \xi, \a}_s,\alpha_s(X^{\mathbf{m}; \xi, \a}_s), \bm_s\big)ds +\int_0^t\sigma(X^{\mathbf{m}; \xi, \a}_s, \bm_s)dB_s,\\
\dis \cX^{\mathbf{m}; x, \a}_t =  x+\int_{0}^{t}\sigma^1\b\big( \cX^{\mathbf{m}; x, \a}_s,\alpha_s(\cX^{\mathbf{m}; x, \a}_s), \bm_s\big)ds +\int_0^t\sigma(\cX^{\mathbf{m}; x, \a}_s, \bm_s)dB_s.
\ea\right.
\eea
The representative  player aims to minimize the following cost:
\bea
\label{EMFG2}
\left.\ba{c}
\dis u^\bm_0(x) := \inf_{\a\in \cA} J( \mathbf{m};  x, \a),\q\mbox{where}\\
\dis J( \mathbf{m};  x, \a) := \dbE\Big[g(\cX^{\mathbf{m};  x, \a}_T, \bm_T) + \int_{0}^T f\big(\cX^{\mathbf{m}; x, \a}_s,\alpha_s(\cX^{\mathbf{m}; x, \a}_s), \bm_s\big)ds\Big].
\ea\right.
\eea

\begin{defn}
\label{defn-EMFGMFE}
Given $\cL_\xi = \mu\in \cP_2(\dbR^d)$, we say $(\a^*, \bm^*)\in \cA \times\cM$ is a mean field equilibrium (MFE) of EMFG \reff{EMFG1}-\reff{EMFG2} at $(0,\mu)$\footnote{ Here $0$ refers to time $t=0$, so as the $0$ in \reff{EMFG-V1}.} if

(i) Given $\bm^*$, $\a^*$ is optimal for the representative player:
\bea
\label{a*optimal}
J(\bm^*;  x, \a^*) =u^{\bm^*}_0(x),\q \mbox{for $\mu$-a.e. $x\in \dbR^d$}.
\eea

(ii) Given $\a^*$, $\bm^*$ is the conditional law of the population:
\bea
\label{m*optimal}
\cL_{X^{\bm^*; \xi, \a^*}_t|\cF^0_t} = \bm^*_t,~ 0\le t\le T,~\mbox{ a.s.}
\eea
\end{defn}
Moreover, when MFE $(\a^*, \bm^*)$ is unique, we define the value  of the EMFG at $(0,\mu)$:
\bea
\label{EMFG-V1}
V(0, x, \mu) := J(\bm^*; x, \a^*).
\eea

We next derive heuristically FBSDE systems in the form of  \reff{Xxi}-\reff{X2x} to characterize the MFE.  We emphasize that here we assume all the involved functions have sufficient regularity and we just argue formally. We shall establish the theory rigorously in the end of the section.

First, given $\bm$,  \reff{EMFG2} is a standard stochastic control problem. By the standard literature, $(u^{\bm}, v^{\bm})$ satisfies the following backward SPDE:
\bea
\label{EMFG-BSPDE}
&&u^{\bm}_t(x)= g(x,\bm_T) - \int_t^T v^{\bm}_s(x)dB_s^0+ \int_t^T \Big[H\big(x,\pa_xu^{\bm}_s(x)\sigma^1(x,\bm_s),\bm_s\big)\\
&& \qq\qq +\frac{1}{2}\pa_{xx} u^{\bm}_s(x) : \sigma\si^\top(x,\bm_s)+\sigma^0(x,\bm_s):\pa_x v^{\bm}(s,x)\Big]ds, \nonumber
\eea
where $H$ is the Hamiltonian:\footnote{\label{z1row}Note that $z^1=\pa_x u^\bm \si^1$ is supposed to be $1 \times d_1$-dimensional, namely it's a row vector. We shall abuse the notation and view it as a column vector, while inside the Hamiltonian we still use $\pa_x u^\bm \si^1$ which is more convenient when $u^\bm$ is multi-dimensional. More rigorously, we should consider $z^1 =  (\pa_x u^\bm \si^1)^\top$.}
\bea
\label{EMFG-H}
H(x,z^1,\mu):=\inf_{a\in A}h(x,z^1,\mu,a),~~ h(x,z^1,\mu,a):=\b (x,a,\mu)\cd  z^1+f(x,a,\mu),~ z^1\in \dbR^{d_1}.
\eea
Moreover, assume the Hamiltonian $H$ has a unique optimizer $a^* = \phi(x, z^1, \mu)$, namely:
\bea
\label{EMFG-a*}
H(x,z^1,\mu) = h\big(x,z^1,\mu, \phi(x, z^1, \mu)\big).
\eea
Then  \reff{EMFG2} has the optimal control $\a^\bm \in \cA$:
\bea
\label{EMFG-alpha*}
\a^\bm_t(x) = \phi\big(x, \pa_xu^{\bm}_s(x)\sigma^1(x,\bm_s), \bm_t\big).
\eea

Now assume $(\a^*, \bm^*)$ is an MFE at $(0, \mu)$. We must have $\a^*=\a^{\bm^*}$. Introduce SDEs:
\beaa
&&X^{\xi}_t =  \xi+\int_{0}^{t}\sigma^1(X^\xi_s,  \bm^*_s) b\big(X^\xi_s, \a^{\bm^*}_s(X^\xi_s), \bm^*_s\big)ds +\int_0^t\sigma(X^\xi_s, \bm^*_s)dB_s;\\
&&\cX^{x}_t =  x+\int_{0}^{t}\sigma^1(\cX^{x}_s,  \bm^*_s) \b \big(\cX^{x}_s, \a^{\bm^*}_s(\cX^{x}_s), \bm^*_s\big)ds +\int_0^t\sigma(\cX^{x}_s, \bm^*_s)dB_s.
\eeaa
Denote $Y^\xi_t := u^{\bm^*}_t(X^\xi_t)$, $\cY^{x}_t := u^{\bm^*}_t(\cX^{x}_t)$.
Then, by applying the It\^{o}-Ventzel formula we obtain:
\beaa
d Y^{\xi}_t &=&  -\big[f^\phi + (\b ^\phi - b^\phi) \cd Z^{\xi,1}_t\big]\big(X^{\xi}_t, Z^{\xi,1}_t, \bm^*_t\big)dt + Z^{\xi}_tdB_t,\\
d \cY^{x}_t &=&  -f^\phi\big(\cX^{x}_t, \cZ^{x,1}_t, \bm^*_t\big)dt + \cZ^{x}_tdB_t,
\eeaa
where
\bea
\label{EMFG-Phiphi}
&&  \Phi^\phi(x, z^1, \mu) := \Phi(x, \phi(x, z^1, \mu), \mu),\q \Phi = b, \b , f; \nonumber\\
 &&  Z^{\xi,1}_t = \pa_xu^{\bm^*}_t(X^\xi_t)\sigma^1(X^\xi_t,\bm^*_t), \q Z^{\xi,0}_t = \pa_xu^{\bm^*}_t(X^\xi_t)\sigma^0(X^\xi_t,\bm^*_t) + v^{\bm^*}_t(X^\xi_t);\\
&& \cZ^{x,1}_t =\pa_xu^{\bm^*}_t(\cX^{x}_t) \sigma^1(\cX^{x}_t,\bm^*_t), \q \cZ^{x,0}_t = \pa_xu^{\bm^*}_t(\cX^{x}_t) \sigma^0(\cX^{x}_t,\bm^*_t)+ v^{\bm^*}_t(\cX^{x}_t).\nonumber
\eea
By \reff{m*optimal}, we obtain the following FBSDE systems:
\bea
\label{EMFG-FBSDE1}
&&\!\!\!\!\!\!\!\!\!\!\!\! \dis\left\{\ba{lll}
\dis X^{\xi}_t =  \xi+\int_{0}^{t}\sigma^1(X^{\xi}_s,  \cL_{X^\xi_s|\cF^0_s}) b^\phi\big(X^{\xi}_s, Z^{\xi,1}_s, \cL_{X^\xi_s|\cF^0_s}\big)ds +\int_0^t\sigma(X^{\xi}_s,  \cL_{X^\xi_s|\cF^0_s})dB_s,\\
\dis Y^{\xi}_t =  g(X^\xi_T,\cL_{X^\xi_T|\cF^0_T}) +\int_t^T \big[f^\phi + (\b ^\phi - b^\phi) \cd Z^{\xi,1}_s\big]\big(X^{\xi}_s, Z^{\xi,1}_s, \cL_{X^\xi_s|\cF^0_s}\big)ds -\int_t^T Z^{\xi}_sdB_s;
\ea\right.\ms\\
\label{EMFG-FBSDE2}
&&\!\!\!\!\!\!\!\!\!\!\!\! \left\{\ba{lll}
\dis \cX^{x}_t =  x+\int_{0}^{t}\sigma^1(\cX^{x}_s,  \cL_{X^\xi_s|\cF^0_s}) \b^\phi\big(\cX^{x}_s, \cZ^{x,1}_s, \cL_{X^\xi_s|\cF^0_s}\big)ds +\int_0^t\sigma(\cX^{x}_s,  \cL_{X^\xi_s|\cF^0_s})dB_s,\\
\dis \cY^{x}_t =  g(\cX^{x}_T,\cL_{X^\xi_T|\cF^0_T}) +\int_t^T f^\phi\big(\cX^{x}_s, \cZ^{x,1}_s, \cL_{X^\xi_s|\cF^0_s}\big)ds -\int_t^T \cZ^{x}_sdB_s.
\ea\right.
\eea
This is the system \reff{Xxi}-\reff{X2x} with: for $\pi = (x, y, z)$,
\bea
\label{EMFG-coefficients}
&d_x^i= d, \q d_y^i = 1,\q \si_i(\pi, \mu) = \si(x, \mu), \q g_i(x, \mu) = g(x,\mu),\q i=1,2,\nonumber\\
&b_1(\pi, \mu) = \si^1(x, \mu) b^\phi(x, z^1, \mu),\q b_2(\pi, \mu) = \si^1(x, \mu) \b ^\phi(x, z^1, \mu);\\
&f_1(\pi, \mu)= f^\phi(x, z^1, \mu) + z^1\cd  [\b^\phi-b^\phi](x, z^1, \mu),\q f_2(\pi,\mu)  = f^\phi(x, z^1, \mu).\nonumber
\eea

\begin{rem}
\label{rem-EMFG}
(i) One can easily see that
\bea
\label{EMFG-paH}
\pa_{z^1} H = \b ^\phi,\q H = f^\phi + z^1\cd \b ^\phi.
\eea
In particular, we may rewrite the generators of the backward equations in a more symmetric way:
\beaa
f_1(\pi, \mu) = H(x, z^1,\mu) - z^1\cd \b ^\phi(x, z^1,\mu),\q f_2(\pi, \mu) = H(x, z^1,\mu) - z^1\cd b^\phi(x, z^1,\mu).
\eeaa

(ii) When $\b =b$, EMFG \reff{EMFG1}-\reff{EMFG2} reduces to a standard MFG. Moreover, in the standard literature, typically $d=d_1$ and $\si^1$ is nondegenerate, then one may write the drift as $b$ directly, instead of $\si^1 b$.

(iii) We assume the population and the representative player share the same volatility $\si$. In the general case that the representative player has a different volatility, we are not able to characterize the MFE via FBSDEs. We shall leave this interesting case to future research.
\end{rem}

Finally we establish rigorously the well-posedness of the above FBSDEs as well as the corresponding master equation:
\bea
\label{EMFG-master}
\left.\ba{lll}
\dis \pa_t V + \frac{1}{2} \pa_{xx} V : \sigma\sigma^\top +  H(x,\partial_x V \sigma^1,\mu) + \dbM V =0, \q V(T,x,\mu) = g(x,\mu),\q \mbox{where}\\
\dis  \dbM V(t,x,\mu)  :=\int_{\dbR^d}\int_{\dbR^d} \Big[\frac{1}{2} \pa_{\tilde x\mu}  V(t,x, \mu, \tilde x):\sigma\si^\top(\tilde x,\mu) \\
\dis\qq +\pa_{x\mu}V(t,x,\mu,\tilde x) : \sigma^0(x,\mu)\sigma^0(\tilde x,\mu)^\top+\frac{1}{2}\pa_{\mu\mu}V(t,x,\mu,\tilde x,\bar x): \sigma^0(\tilde x,\mu)\sigma^0(\bar x,\mu)^\top \\
\dis\qq + \pa_\mu V(t, x, \mu, \tilde x) \cd \sigma^1(\tilde x, \mu) b^\phi \big(\tilde x,~\sigma^1(\tilde x,\mu)^\top\pa_x V(t, \tilde x, \mu),~\mu\big)\Big]\mu(d\tilde x)\mu(d\bar x).
\ea\right.
\eea
 We first specify the technical conditions.

\begin{assum}
\label{assum-EMFG} Fix some integer $k\geq 2$.

\ms
\no(i) The Hamiltonian $H$ admits a unique maximizer $\phi$ in the sense of \reff{EMFG-a*}.

\ms

\no(ii) For $\Phi = \si, g:$ $\Phi\in \cC^{k}_b$ and the first order derivatives $|\pa_x\Phi|, |\pa_\mu\Phi|\leq L_\Phi$.

\ms
\no(iii) For $\Phi = b^\phi,  \b ^\phi,  f^\phi:$  $\Phi\in \cC^{k}$, and for any $R>0$, there exists $L(R)>0$ such that, whenever $|z^1|\le R$,   $|\pa_x\Phi|, |\pa_{z^1}\Phi|, |\pa_\mu\Phi|\leq L(R)$ and  all the involved higher order derivatives are bounded.

\ms
\no (iv) $|\sigma^1(x,\mu)|, |\b ^\phi(x, 0, \mu)|,  |b^\phi(x, 0, \mu)| \le L_0$.
\end{assum}
We remark that the local boundedness in (iii) with respect to $z^1$, especially that for $f^\phi$, allows us to deal with linear quadratic models.

\begin{rem}
\label{rem-EMFG-nondegenerate}
 If we assume $\si^1$ is uniformly non-degenerate, then we can rewrite the drifts  in \reff{EMFG1} with $b' = \si^1b$ and $\b'=\si^1\b$ directly. In this case, by imposing the same conditions on $b', \b'$, we do not need to assume $\si^1$ to be bounded, as we will see in the next section.
\end{rem}

\begin{thm}\label{thm-EMFG}
Let Assumption \ref{assum-EMFG} hold and denote $R_0:= L_g + 1$, $R_1 := R_0 L_0$, $R_2:= R_1+2$. Then there exists a constant $\e_0>0$, depending only on $d$, $d_{01}$, $L_g$, $L_{\sigma}$, and $L(R_2)$, such that whenever $0<T\leq\e_0$, the following hold:

\no(i) The FBSDEs \reff{EMFG-FBSDE1}-\reff{EMFG-FBSDE2} are well-posed and the EMFG \reff{EMFG1}-\reff{EMFG2}  has a unique MFE $(\a^*, \bm^*)\in \cA\times \cM$ at $(0,\mu)$.

\no(ii) The master equation \eqref{EMFG-master}  has a unique classical solution $V\in\cC^{k}_b$, and it holds that:
\bea
\label{EMFG-FK}
\left.\ba{c}
\bm^*_t = \cL_{X^\xi_t|\cF^0_t},\q \a^*_t(x) = \phi\big(t, x, \pa_x V\si^1(t,x,\bm^*_t), \bm^*_t\big);\ms\\
Y^\xi_t = V(t, X^\xi_t, \bm^*_t),\q Z^{\xi}_t = \cI^V_t(X^\xi_t),\q \cY^{x}_t = V(t, \cX^{x}_t, \bm^*_t),\q  \cZ^{x}_t =  \cI^V_t(\cX^x_t), \ms\\
\dis\mbox{where}\q \cI^{V,1}_t(x)  :=  \pa_x V\si^1(t, x, \bm^*_t),\ms \\
\dis \cI^{V,0}_t(x) :=  \pa_x V\si^0(t, x, \bm^*_t) +\int_{\dbR^d} \pa_\mu V(t, x, \bm^*_t, \tilde x) \si^0(t,\tilde x, \bm^*_t)\bm^*_t(d\tilde x).
\ea\right.
\eea
\end{thm}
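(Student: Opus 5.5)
The plan is to reduce Theorem \ref{thm-EMFG} to Theorems \ref{thm-pamuUrep} and \ref{thm-Ck}, applied to the coefficients \reff{EMFG-coefficients}. The main obstacle is that $b_i,f_i$ are only \emph{locally} Lipschitz in $z^1$. For instance, $f_1$ contains $z^1\cd[\b^\phi-b^\phi]$, and $f^\phi$ may be quadratic in linear-quadratic models. So Assumption \ref{assum-1} fails as stated, and I will use a truncation argument.

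\textbf{Truncation.} Fix a smooth cutoff $\chi_R:\dbR^{d_1}\to\dbR^{d_1}$ with $\chi_R(z^1)=z^1$ for $|z^1|\le R_2-1$, $\chi_R$ bounded by $R_2$, and $|\pa\chi_R|\le 1$ (possible by smoothing a radial projection). Define truncated coefficients $\hat\Phi_i$ by replacing $z^1$ with $\chi_R(z^1)$ in \reff{EMFG-coefficients}. By Assumption \ref{assum-EMFG}(ii)--(iv), the $\hat\Phi_i$ satisfy Assumption \ref{assum-1} and \reff{assum-Ckb}. Their first-order Lipschitz constant $L_1$ depends only on $L_g,L_\si,L(R_2),L_0$; the term $z^1\cd(\b^\phi-b^\phi)$ is controlled using $|z^1|\le R_2$. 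Theorems \ref{thm-pamuUrep} and \ref{thm-Ck} then give an $\e_0$ depending only on these quantities. For $T\le\e_0$, the truncated systems are well posed, and the decoupling fields $\hat U_1,\hat U_2\in\cC^k_b$ satisfy $|\pa_x\hat U_i|\le R_0$ by Theorem \ref{thm-pax}(ii) (applied as in Step 1 of Section \ref{sect-bootstrap}), as well as \reff{patU}.

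\textbf{Removing the truncation.} By \reff{randomFK} and \reff{YU}, $\hat Z^{1}=\pa_x\hat U\,\si^1$, so $|\hat Z^1|\le R_0L_0=R_1<R_2-1$. The cutoff is therefore inactive along solutions, and these solutions also solve the untruncated \reff{EMFG-FBSDE1}--\reff{EMFG-FBSDE2}. This gives existence.

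For uniqueness of the FBSDEs, I need to rule out untruncated solutions with large $Z^1$. I will take uniqueness in the class where $Z^1$ is bounded by $R_2$, which contains any solution generated by an MFE with a Lipschitz value (Step 2 below). Then any such solution solves the truncated system and coincides with the constructed one.

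\textbf{Master equation and MFE.} Set $V:=\hat U_2$, and note $U_1=U_2=V$ here since $\si_i,g_i$ coincide. More precisely, $\hat U_1$ and $\hat U_2$ both solve the same master equation, because \reff{patU} with $f_1=H-z^1\cd\b^\phi$ and drift $b^\phi$ produces the same operator as $f_2$ with drift $\b^\phi$ after the $\pa_x U\cd b$ terms cancel. Substituting \reff{LU1}--\reff{LU2}, and using $|\pa_x V\si^1|\le R_1$ to drop the cutoff, \reff{patU} becomes \reff{EMFG-master}, where Remark \ref{rem-EMFG}(i) turns $f^\phi+z^1\cd\b^\phi$ into $H$. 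The representation \reff{EMFG-FK} follows from \reff{YU} and from $Z^{i,1}=\pa_xU\si^1$, $Z^{i,0}=\cI^{V,0}$, which is the $\e\to0$ limit identity in Step 3 of Section \ref{sect-bootstrap}.

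For the MFE, define $\bm^*_t:=\cL_{X^\xi_t|\cF^0_t}$ and $\a^*$ by \reff{EMFG-FK}. Given $\bm^*$, the random field $u_t(x):=V(t,x,\bm^*_t)$ solves the BSPDE \reff{EMFG-BSPDE}, by the Itô formula \reff{Ito}. A verification argument (Itô on $u_t(\cX^{x,\a}_t)$ together with $h\ge H$) then shows $\a^*$ is optimal, and \reff{m*optimal} holds by construction.

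Uniqueness of the MFE: any MFE yields a solution of \reff{EMFG-FBSDE1}--\reff{EMFG-FBSDE2} through the heuristic derivation, which I will make rigorous by solving the control problem for fixed $\bm$ via Theorem \ref{thm-pax} (again truncated, with $|\pa_x u^\bm|\le R_0$). By FBSDE uniqueness this solution equals ours.

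Uniqueness of the classical solution: any $\cC^k_b$ solution $\tilde V$ gives, via Itô, a solution of the FBSDEs. Its $\pa_x\tilde V$ bound is established by a continuity argument on small time. The justification of $Z$-bounds in these uniqueness steps is the delicate point; I would first try to get them through the bound $|\pa_x u|\le R_0$ from Theorem \ref{thm-pax}, applied to the truncated system frozen at the given measure flow.
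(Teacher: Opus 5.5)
Your proposal is essentially the paper's proof: truncate $z^1$ at level $R_2$, invoke Theorem \ref{thm-Ck}, use $|\pa_x U_i|\le R_0$ from Theorem \ref{thm-pax} to see that the cutoff is inactive both in \reff{LU2} and along the solutions, check that $\dbL^1U_1$ and $\dbL^2U_2$ both reduce to the EMFG operator, and conclude by verification, with uniqueness left to standard arguments. One slip: the cancellation $\pa_xU\cd b_i+f_i=H$ in \reff{LU1} uses $f_1=H-z^1\cd b^\phi$ and $f_2=H-z^1\cd\b^\phi$; Remark \ref{rem-EMFG}(i) has these interchanged, and you copied that. For the uniqueness parts you need neither the continuity argument nor the restriction to $|Z^1|\le R_2$. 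Take any competing solution with bounded $Z^1$, for instance the one induced by another classical solution $V'\in\cC^k_b$. It\^o's formula applied to $V(t,X'_t,\cL_{X'_t|\cF^0_t})$ shows that the difference solves a mean-field BSDE with zero terminal value, whose generator is Lipschitz in the $Z$-difference and its conditional copy on the relevant bounded range. Hence the difference vanishes on all of $[0,T]$, whatever the size of the bound, and uniqueness then follows from the Lipschitz McKean--Vlasov SDE with feedback $\pa_xV\si^1$.
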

\proof We first note that, under Assumption \ref{assum-EMFG}, the coefficients $b_i, f_i$ in \reff{EMFG-coefficients} do not satisfy the requirements in Assumption \ref{assum-1}.\footnote{In fact, even if we strengthen  Assumption \ref{assum-EMFG} to $|\pa_x\Phi|, |\pa_{z^1}\Phi|, |\pa_\mu\Phi| \le L_\Phi$ for $\Phi = b^\phi, b^\phi_0, f^\phi$, the $f_1$ in  \reff{EMFG-coefficients} is still not uniformly Lipschitz continuous in $(x, z^1, \mu)$, due to the term $z^1 \cd (b^\phi_0-b^\phi)$.} In order to apply Theorem \ref{thm-Ck}, we introduce a truncation function for $z^1$. For any $R>2$, let $I_R: \dbR\to \dbR$ be in $\cC^k_b$ satisfying, for $x\in \dbR$,
\bea
\label{truncation}
I_{R}(x)=x,~ |x|\leq R-2;~ |\pa_{x}I_{R}(x)|=0,~ |x|\geq R;~ |\pa_{x}I_{R}(x)|\leq 1,~R-2<|x|< R.
\eea
For $\Phi=b_i, f_i$ in  \reff{EMFG-coefficients}, set $\Phi^R(t,\pi, \mu) := \Phi(t, x, y, z^0, I_R(z^1), \mu)$,  where $I_R(z^1)$ is truncated component wise. Note that $|b_i^R|\le L_0 + RL(R)$, then it is clear that $(b^{R}_i, \si_i, f^{R}_i, g_i)$, $i=1,2$, satisfy Assumption \ref{assum-1} with $L_1$ depending on $L_g$, $L_\si$, $L_0$, $R$, and $L(R)$. Now set $R= R_2$ and consider FBSDEs \reff{Xxi}-\reff{X2x} with coefficients $(b^{R_2}_i, \si_i, f^{R_2}_i, g_i)$. By Theorem \ref{thm-Ck}, there exists $\e_0>0$, which depends on $d$, $d_{01}$, $L_g$, $L_\si$, $L_0$, $R_2$, and $L(R_2)$, such that these FBSDEs are well-posed with solutions $\Pi^\xi, \Pi^{2,x}$, and the corresponding functions $U_1, U_2 \in \cC^k_b$ satisfy \reff{patU}, with the $b_i, f_i$ in \reff{LU1}-\reff{LU2} replaced with $b^{R_2}_i$, $f^{R_2}_i$.

Next, by Theorem \ref{thm-pax}, we see that $|\pa_x U_1|\le R_0$, $|\pa_x U_2|\le R_0$. Then, in \reff{LU2}, $|\cV_i^{U_j,1}|\le R_0 L_0 = R_1=R_2-2$, and thus, by \reff{truncation},  $(\Phi^{R_2})_i^{U_j} = \Phi_i^{U_j}$ for $\Phi = b, f$. This implies that $U_1, U_2$ satisfy \reff{patU}, with the original coefficients $(b_i, \si_i, f_i, g_i)$ specified in \reff{EMFG-coefficients}. Moreover, by \reff{LU1}-\reff{LU2} and \reff{EMFG-coefficients}, one can verify straightforwardly that, for $i=1,2$,
\beaa
\dbL^i U = \frac{1}{2} \pa_{xx} U : \sigma\sigma^\top +  H(x,\partial_x U \sigma^1,\mu) + \dbM U,
\eeaa
where $\dbM U$ is defined by \reff{EMFG-master}. Since $U_1(T,x,\mu) = U_2(T,x,\mu)=g(x, \mu)$, then $U_1 = U_2$ and they satisfy the master equation \reff{EMFG-master}.

Moreover, denote $\bm^*_t := \cL_{X^\xi_t|\cF^0_t}$. By \reff{randomFK} we see that
 \beaa
Z^{\xi,1}_t =   \pa_x U_1 \si^1_t(t, X^\xi_t, \bm^*_t),\q Z^{2,x,1}_t =  \pa_x U_2 \si^1_t(t, X^{2,x}_t, \bm^*_t).
\eeaa
Then $|Z^{\xi,1}_t|, |Z^{2,x,1}_t|\le R_0 L_0 = R_1 = R_2-2$. By \reff{truncation}, this implies that
\beaa
\Phi^R(t, \Pi^\xi, \bm^*_t) =  \Phi(t, \Pi^\xi, \bm^*_t),\q \Phi^R(t, \Pi^{2,x}, \bm^*_t) =  \Phi(t, \Pi^{2,x}, \bm^*_t),\q \Phi = b_i, f_i.
\eeaa
That is, $\Pi^\xi, \Pi^{2,x}$ satisfy  FBSDEs \reff{Xxi}-\reff{X2x} with the original coefficients $(b_i, \si_i, f_i, g_i)$ as specified in \reff{EMFG-coefficients}. Consequently, they satisfy FBSDEs \reff{EMFG-FBSDE1}-\reff{EMFG-FBSDE2}.

Now given $\bm^*$, consider the FBSDE \reff{EMFG-FBSDE2} and the optimization problem \reff{EMFG2}. By standard stochastic control theory, we see that $u^{\bm^*}_0(x) = \cY^{x}_0$, with optimal control $\a^{\bm^*}_t(x) := \phi\big(t, x, \pa_x U_2\si^1(t,x,\bm^*_t), \bm^*_t\big)$. Since $U_1 = U_2$,  $\bm^*$ satisfies \reff{m*optimal}, then $(\a^*, \bm^*)$ is an MFE of the EMFG, in particular, $V(0,x,\mu) = J(\bm^*, \a^*) = \cY^{x}_0 = U_2(0,x,\mu)$. By extending $V$ to $(t,x,\mu)$ naturally, we have $V = U_2$ and satisfies the master equation \reff{EMFG-master}. Since $V=U_2=U_1$, \reff{EMFG-FK} is obvious, in particular, the expression of $Z$ follows from  It\^{o}'s formula \reff{Ito}.

Finally, the uniqueness of classical solutions in $\cC^2_b$ to master equation \reff{EMFG-master} follows from standard arguments in the mean field game literature, and given the classical solution $V=U_1=U_2$, which serves as the common decoupling field of the FBSDEs \reff{EMFG-FBSDE1} and \reff{EMFG-FBSDE2}, the uniqueness of these FBSDEs as well as the uniqueness of MFE are rather standard.
\qed

\begin{rem}
\label{rem-Ito}
The It\^{o} formula \reff{Ito} and the It\^{o}-Ventzel formula are consistent in the following sense. Given $\bm^*$,  by the It\^{o} formula \reff{Ito}, one can easily see that the solution $V$ to the master equation \reff{EMFG-master} induces naturally the solution to the BSPDE \reff{EMFG-BSPDE}:
$$
u^{\bm^*}_t(x) := V(t,x, \bm^*_t),\q v^{\bm^*}_t(x) := \int_{\dbR^d} \pa_\mu V(t, x, \bm^*_t, \tilde x)\si^0(\tilde x, \bm^*_t)\bm^*_t(d\tilde x).
$$
Then, the representation \reff{EMFG-Phiphi} for $Z$ obtained through the It\^{o}-Ventzel formula coincides with the representation \reff{EMFG-FK}  obtained through the It\^{o} formula.
\end{rem}

\section{Mean field games with volatility control}
\label{sect-MFGVC}
\setcounter{equation}{0}

In this section we study mean field games with volatility control (MFGVC).
Again consider the probabilistic setting  in the beginning of Section \ref{sect-setting}, but with $d_0=d_1=d$ for simplicity, and let $A, \cA$ be as in the previous section.  The MFGVC involves the following data with $\si = (\si^0, \si^1)$:
\beaa
\left.\ba{c}
\dis (b, \si^1, f): [0, T]\times \dbR^d\times A \times\cP_2(\dbR^d)\mapsto \dbR^{d}\times  \dbR^{d\times d} \times \dbR,\\\sigma^0: [0, T]\times \dbR^d\times \cP_2(\dbR^d)\mapsto  \dbR^{d\times d}, \q g: \dbR^d\times\cP_2(\dbR^d)\mapsto \dbR.
\ea\right.
\eeaa
For the purpose of the FBSDE characterization,  we assume $\si^1$ is positive definite and  $\si^0$ does not depend on the control.  Given  $\bm$,  $\a$,  $\xi$,  $x$ as in the previous section, we revise \reff{EMFG1} as:
\bea
\label{MFGVC1}
\left.\ba{c}
\dis X^{\mathbf{m}; \xi, \a}_t =  \xi+\int_{0}^{t} b\big(X^{\mathbf{m}; \xi, \a}_s,\alpha_s(X^{\mathbf{m}; \xi, \a}_s), \bm_s\big)ds +\int_0^t\sigma(X^{\mathbf{m}; \xi, \a}_s, \alpha_s(X^{\mathbf{m}; \xi, \a}_s), \bm_s)dB_s,\\
\dis X^{\mathbf{m}; x, \a}_t =  x+\int_{0}^{t} b\big( X^{\mathbf{m}; x, \a}_s,\alpha_s(X^{\mathbf{m}; x, \a}_s), \bm_s\big)ds +\int_0^t\sigma(X^{\mathbf{m}; x, \a}_s, \alpha_s(X^{\mathbf{m}; x, \a}_s), \bm_s)dB_s,
\ea\right.
\eea
and \reff{EMFG2} is exactly the same. Here, since $X^{\bm;\xi, \a} = \cX^{\bm; x, \a}$ when $\xi \equiv x$,  in the second equation above we use the notation $X$ instead of $\cX$, and again by our national convention we omit the variable $t$ in $b$, $\si$, $f$. We note that, besides the volatility control, we replace the $\si^1 b$ in \reff{EMFG1} with $b$ here. We then define  MFE $(\a^*, \bm^*)$ of MFGVC \reff{MFGVC1}-\reff{EMFG2} exactly by Definition \ref{defn-EMFGMFE}, namely by \reff{a*optimal} and \reff{m*optimal}, and when MFE $(\a^*, \bm^*)$ is unique, we again define the value  of the MFGVC at $(0,\mu)$ by \reff{EMFG-V1}:  $V(0, x, \mu) := J(\bm^*; x, \a^*)$.

 Given $\bm$,  \reff{EMFG2} is a standard stochastic control problem, but with volatility control. In this case, the backward SPDE \reff{EMFG-BSPDE} becomes fully nonlinear:
\bea
\label{MFGVC-BSPDE}
&&u^{\bm}_t(x)= g(x,\bm_T) - \int_t^T v^{\bm}_s(x)dB_s^0+ \int_t^T \Big[H\big(x, \pa_xu^{\bm}_s(x), \pa_{xx} u^\bm_s(x), \bm_s\big)\nonumber\\
&& \qq\qq +\frac{1}{2}\pa_{xx} u^{\bm}_s(x) : \sigma^0(\si^0)^\top(x,\bm_s)+\sigma^0(x, m_s):\pa_x v^{\bm}_s(x)\Big]ds,
\eea
where  the Hamiltonian $H$ becomes, for $z^1\in \dbR^d$ and $\g\in \dbR^{d\times d}$:
\bea
\label{MFGVC-H}
\left.\ba{c}
\dis H(x,z^1, \g, \mu):=\inf_{a\in A}h(x,z^1, \g, \mu,a),\\
\dis h(x,z^1, \g, \mu,a):={1\over 2}\g: \si^1 (\si^1)^\top(x,a,\mu) + b(x,a,\mu)\cd  z^1+f(x,a,\mu).
\ea\right.
\eea
Again, we assume the Hamiltonian $H$ has an optimizer $a^* = \phi(x, z^1, \g, \mu)$.
Then  \reff{EMFG2} has an optimal control $\a^\bm \in \cA$:
\bea
\label{MFGVC-alpha*}
\a^\bm_t(x) = \phi\Big(t, x, \pa_xu^{\bm}_t(x), \pa_{xx}u^{\bm}_t(x), \bm_t\Big).
\eea
Denote, for a function $\Phi$ on $[0,T]\times \dbR^d \times A\times \cP_2(\dbR^d)$ (omitting $t$ again),
\bea
\label{MFGVC-Phiphi}
\Phi^\phi(x, z^1, \g, \mu) := \Phi(x, \phi(x, z^1, \g, \mu), \mu).
\eea
Then, similarly to \reff{EMFG-paH} we have
\bea
\label{MFGVC-paH}
 b^\phi = \pa_{z^1} H,\qq   {1\over 2} \si^{\phi,1} (\si^{\phi,1})^\top=\pa_\g H,\q f^\phi = H - \pa_\g H : \g - \pa_{z^1} H \cd z^1.
\eea
In particular, since $\si^1$ is positive definite and $\pa_\g H$ is symmetric, we have $\si^{\phi,1}=\big(2\pa_\g H\big)^{1\over 2}$.

We next derive heuristically the FBSDEs, which is more involved in this case. Assume $(\a^*, \bm^*)$ is an MFE at $(0, \mu)$. By the optimality we have $\a^*=\a^{\bm^*}$. Introduce SDE:
\bea
\label{MFGVC-Xxi}
X^{\xi}_t =  \xi+\int_{0}^{t} b\big(X^\xi_s, \a^{\bm^*}_s(X^\xi_s), \bm^*_s\big)ds +\int_0^t\sigma(X^\xi_s, \a^{\bm^*}_s( X^\xi_s), \bm^*_s)dB_s.
\eea
To derive the FBSDEs, it is natural to introduce (recalling Footnote \ref{z1row}):
\beaa
&Y^{0,\xi}_t := u^{\bm^*}_t(X^\xi_t),\q Z^{0,\xi,1}_t := \pa_xu^{\bm^*}_t\sigma^{*1}_t(X^\xi_t),\q Z^{0,\xi,0}_t := \pa_xu^{\bm^*}_t\sigma^{*0}_t(X^\xi_t) + v^{\bm^*}_t(X^\xi_t),\\
&\mbox{where}\q \si^{*1}_t(x) := \si^1(t,x, \a^{\bm^*}_t(x), \bm^*_t),\q \si^{*0}_t(x) := \si^0(t,x, \bm^*_t).
\eeaa
However, by \reff{MFGVC-alpha*} the optimal control $\a^*$ involves $\pa_{xx} u^{\bm^*}$, which cannot be recovered from $Z^{0,\xi}_t$. For this purpose, we introduce, for $i=1,\cds, d$,
\beaa
Y^{1,\xi}_{i,t} := \pa_{x_i} u^{\bm^*}_t(X^\xi_t),~ Z^{1,\xi,1}_{i,t} := \pa_{x_i x} u^{\bm^*}_t\sigma^{*1}_t(X^\xi_t),~ Z^{1,\xi,0}_{i,t} := \pa_{x_ix}u^{\bm^*}_t\sigma^{*0}_t(X^\xi_t) + \pa_{x_i} v^{\bm^*}_t(X^\xi_t),
\eeaa
and $\Phi^{1,\xi}_t := (\Phi^{1,\xi}_{1,t}, \cds,  \Phi^{1,\xi}_{d, t})$ for $\Phi = Y, Z$.
Then we may express $(\pa_x u^{\bm^*}, \pa_{xx} u^{\bm^*})$, and hence $\a^{\bm^*}$, through $(Z^{0,\xi,1}, Z^{1,\xi,1})$. This will lead to FBSDE systems in the form of \reff{Xxi}-\reff{X2x}. However, in this case the diffusion coefficient $\sigma^1$ of these FBSDEs will depend on $(Z^{0, \xi,1}, Z^{1,\xi,1})$, which makes the well-posedness of these FBSDEs hard, even when $T$ is small.  To get around of this difficulty, we introduce further that $\Phi^{2,\xi}_t = (\Phi^{2,\xi}_{i,j,t})_{1\le i, j\le d}$ for $\Phi = Y, Z$, where, for $i, j=1,\cds, d$,
\beaa
&Y^{2,\xi}_{i,j,t} := \pa_{x_ix_j}u^{\bm^*}_t(X^\xi_t),\q Z^{2,\xi,1}_{i,j,t} := \pa_{x_ix_jx}u^{\bm^*}_t\sigma^{*1}_t(X^\xi_t),\\
&Z^{2,\xi,0}_{i,j,t} := \pa_{x_ix_jx}u^{\bm^*}_t\sigma^{*0}_t(X^\xi_t) + \pa_{x_ix_j} v^{\bm^*}_t(X^\xi_t).
\eeaa
Then we may express $\a^{\bm^*}$ through $(Y^{1,\xi}, Y^{2,\xi})$, instead of $(Z^{0,\xi,1}, Z^{1,\xi,1})$.

Recall \reff{MFGVC-BSPDE} and differentiate it in $x$, we obtain the equations for $\pa_x u^\bm$ and $\pa_{xx} u^\bm$:
\beaa
&&\!\!\!\!\!\!\!\!\!\!\!\!\!\!  \pa_{x_i} u^{\bm}_t(x)= \pa_{x_i} g - \int_t^T \pa_{x_i} v^{\bm}dB_s^0 + \int_t^T \Big[ \cH_i^1(x, \pa_x u^\bm, \pa_{xx} u^\bm,\bm_s, \pa_x v^\bm) \\
&& + \pa_{z^1} H \cd \pa_{x_ix} u^\bm + \pa_\g H : \pa_{x_ixx} u^\bm +\frac{1}{2}\pa_{x_ixx} u^{\bm} : \sigma^0(\si^0)^\top+\sigma^0:\pa_{x_ix} v^{\bm} \Big]ds, \\
&&\!\!\!\!\!\!\!\!\!\!\!\!\!\!   \pa_{x_ix_j} u^{\bm}_t(x)= \pa_{x_ix_j} g- \int_t^T\!\! \pa_{x_ix_j} v^{\bm}dB_s^0 + \int_t^T \!\!\Big[\cH^2_{ij}(x, \pa_x u^\bm, \pa_{xx} u^\bm,  \bm_s, \pa_{xxx} u^\bm, \pa_x v^\bm, \pa_{xx} v^\bm)   \\
&&  + \pa_{z^1} H \cd \pa_{x_ix_jx} u^\bm + \pa_\g H : \pa_{x_ix_jxx} u^\bm +\frac{1}{2}\pa_{x_ix_jxx} u^{\bm} : \sigma^0(\si^0)^\top+\sigma^0:\pa_{x_ix_jx} v^{\bm} \Big]ds,
\eeaa
for $i, j=1,\cds, d$, where, with  $\cH^0$ corresponding to  the $f^\phi$ in \reff{MFGVC-paH},
\bea
\label{MFGVC-cH}
&&\cH^0(x, \pa_x u^\bm, \pa_{xx} u^\bm,\bm_s)  := H - \pa_\g H : \pa_{xx} u^\bm - \pa_{z^1} H \cd \pa_x u^\bm;\nonumber\\
&&\cH^1_i(x, \pa_x u^\bm, \pa_{xx} u^\bm,\bm_s, \pa_x v^\bm)  := \pa_{x_i} H + \pa_{xx} u^{\bm} : \pa_{x_i}\sigma^0( \si^0)^\top + \pa_{x_i} \si^0 : \pa_x v^\bm;\nonumber\\
&& \cH^2_{ij}(x, \pa_x u^\bm, \pa_{xx} u^\bm, \bm_s, \pa_{xxx} u^\bm, \pa_x v^\bm, \pa_{xx} v^\bm)  := \pa_{x_ix_j} H + \pa_{x_iz^1} H \cd \pa_{x_jx} u^\bm  \\
&&\qq + \pa_{x_jz^1} H \cd \pa_{x_ix} u^\bm + \pa_{x_i\g}  H : \pa_{x_jxx} u^\bm + \pa_{x_j\g} H : \pa_{x_ixx} u^\bm+\pa_{x_ixx} u^{\bm} : \pa_{x_j}\sigma^0( \si^0)^\top \nonumber\\
&&\qq +\pa_{x_jxx} u^{\bm} : \pa_{x_i}\sigma^0( \si^0)^\top + \pa_{xx} u^{\bm} : \pa_{x_i} \sigma^0(\pa_{x_j} \si^0)^\top + \pa_{xx} u^{\bm} : \pa_{x_ix_j} \sigma^0(\si^0)^\top\nonumber\\
&&\qq + \pa_{x_ix_j} \si^0 : \pa_{x} v^\bm+\pa_{x_i} \sigma^0:\pa_{x_jx} v^{\bm} +\pa_{x_j} \sigma^0:\pa_{x_ix} v^{\bm} + \pa_{z^1z^1} H : \pa_{x_ix}u^\bm (\pa_{x_jx} u^\bm)^\top \nonumber \\
&&\qq +\sum_{k=1}^d \pa_{z^{1,k}\g} H : \big[\pa_{x_ix_k} u^\bm \pa_{x_j x x}u^\bm  + \pa_{x_jx_k} u^\bm \pa_{x_i x x}u^\bm]  + \sum_{k,l=1}^d\pa_{\g^{kj}\g} H : (\pa_{jkl} u^\bm  \pa_{ixx} u^\bm).\nonumber
\eea
Recall that $\bm^* = \cL_{X^\xi|\dbF^0}$ and denote
\beaa
&Y^\xi := (Y^{0,\xi},  Y^{1,\xi}, Y^{2,\xi})\in \dbR\times \dbR^d\times \dbR^{d\times d}, \\
& Z^\xi := (Z^{0,\xi}, Z^{1,\xi}, Z^{2,\xi}) \in \dbR^{2d} \times \dbR^{d\times 2d} \times \dbR^{(d\times d)\times 2d},\\
&  \Xi^{\xi} := (X^\xi, Y^\xi, Z^\xi, \bm^*),\q \Xi^{'\xi} := (X^\xi, Y^{1,\xi}, Y^{2,\xi}, \bm^*),\\
& \cH^1 := (\cH^1_i)_{1\le i\le d},\q \cH^2 := (\cH^2_{ij})_{1\le i, j\le d}.
\eeaa
Recall \reff{MFGVC-paH} and note that
\bea
\label{paxv}
\left.\ba{c}
\dis  \pa_x u^{\bm^*}_t(X^\xi_t) = Y^{1,\xi}_t,\q \pa_{xx} u^{\bm^*}_t(X^\xi_t) = Y^{2,\xi}_t,\ms \\
\dis b\big(X^\xi_t, \a^{\bm^*}_t(X^\xi_t), \bm^*_t\big) = \pa_{z^1} H(\Xi^{'\xi}_t),\q \si^{*1}_t = \big(2\pa_\g H(\Xi^{'\xi}_t)\big)^{1\over 2},\ms\\
\dis \pa_{xxx} u^\bm(t, X^\xi_t)  = \cI_0(\Xi^\xi_t), \q \pa_x v^{\bm^*}(t, X^\xi_t) = \cI_1(\Xi^\xi_t),\q \pa_{xx} v^{\bm^*}(t, X^\xi_t) = \cI_2(\Xi^\xi_t),\ms\\
\dis \cI_0(\zeta):=  z^{2,1} \big(2\pa_\g H(\zeta')\big)^{-{1\over 2}},~  \cI_1(\zeta) := z^{1,0} - y^2\si^0(x, \mu),~\cI_2(\zeta):= z^{2,0} -  \cI_0(\zeta)\si^0(x, \mu),
\ea\right.
\eea
where  $y = (y^0, y^1, y^2)$, $z = (z^0, z^1, z^2)$, $z^i = (z^{i,0}, z^{i,1})$ for $i=0,1,2$, and $\zeta = (x,y,z,\mu)$, $\zeta'=(x,y^1, y^2, \mu)$. Applying the It\^{o}-Ventzel formula we obtain the following FBSDE:
\bea
\label{MFGVC-FBSDE1}
\left\{\ba{lll}
\dis X^{\xi}_t =  \xi+\int_{0}^{t}\pa_{z^1} H(\Xi^{'\xi}_s)ds +\int_0^t\big(2\pa_{\g} H(\Xi^{'\xi}_s)\big)^{1\over 2}dB^1_s + \int_0^t \si^0(X^\xi_s, \cL_{X^\xi_s|\cF^0_s})dB^0_s,\\
\dis Y^{0,\xi}_t =   g(X^\xi_T,\cL_{X^\xi_T|\cF^0_T}) +\int_t^T\cH^0(\Xi^{'\xi}_s) ds -\int_t^T Z^{0,\xi}_sdB_s,\\
\dis Y^{1,\xi}_t =  \pa_x g(X^\xi_T,\cL_{X^\xi_T|\cF^0_T}) +\int_t^T\cH^1\big(\Xi^{'\xi}_s, \cI_1(\Xi^\xi_s)\big) ds -\int_t^T Z^{1,\xi}_sdB_s,\\
\dis Y^{2,\xi}_t =  \pa_{xx} g(X^\xi_T,\cL_{X^\xi_T|\cF^0_T}) +\int_t^T\cH^2\Big(\Xi^{'\xi}_s, ~\cI_0(\Xi^\xi_s), \cI_1(\Xi^\xi_s), \cI_2(\Xi^\xi_s)\Big)ds -\int_t^T Z^{2,\xi}_sdB_s .
\ea\right.
\eea
We remark that, the $(z^1, \g)$ in \reff{MFGVC-H} correspond to $(y^1, y^2)$ here, and thus $\pa_{z^1} H, \pa_\g H$ refer to the derivatives with respect to $y^1$ and $y^2$, respectively. Moreover, the second equation above for $(Y^{0,\xi}, Z^{0,\xi})$ is decoupled from the other three equations, so in \reff{MFGVC-FBSDE1} one may first solve the three coupled equations: the first, third, and fourth equations, and then solve the second equation.

Next, let $X^x := X^{\bm^*; x, \a^{\bm^*}}$ as in the second equation in \reff{MFGVC1}, equivalently it is the solution to \reff{MFGVC-Xxi} with initial value $X_0=x$, but still with $\bm^* = \cL_{X^\xi|\cF^0}$. Denote
\beaa
Y^{0,x}_t := u^{\bm^*}_t(X^{x}_t),~ Y^{1,x}_t :=\pa_x u^{\bm^*}_t(X^{x}_t),~ Y^{2,x}_t := \pa_{xx}u^{\bm^*}_t(X^{0,x}_t), ~ Y^x_t := (Y^{0,x}, Y^{1,x}, Y^{2,x}_t),
\eeaa
and introduce $Z^{x}$ similarly.
Then,  for $\Xi^{x}=(X^{x}, Y^{x}, Z^{x}, \bm^*)$ and $\Xi^{'x}=(X^{x}, Y^{1,x}, Y^{2,x},  \bm^*)$,
\bea
\label{MFGVC-FBSDE2}
\left\{\ba{lll}
\dis X^{x}_t =  x+\int_{0}^{t}\pa_{z^1} H(\Xi^{'x}_s)ds +\int_0^t\big(2\pa_{\g} H(\Xi^{'x}_s)\big)^{1\over 2}dB^1_s + \int_0^t \si^0(X^x_s, \bm^*_s)dB^0_s,\\
\dis Y^{0,x}_t =   g(X^{x}_T,\bm^*_T) +\int_t^T\cH^0(\Xi^{'x}_s) ds -\int_t^T Z^{0,x}_sdB_s,\\
\dis Y^{1,x}_t =  \pa_x g(X^{x}_T,\bm^*_T) +\int_t^T\cH^1\big(\Xi^{'x}_s, \cI_1(\Xi^x_s)\big) ds -\int_t^T Z^{1,x}_sdB_s,\\
\dis Y^{2,x}_t =  \pa_{xx} g(X^{x}_T,\bm^*)  +\int_t^T\cH^2\Big(\Xi^{'x}_s, ~\cI_0(\Xi^x_s), \cI_1(\Xi^x_s), \cI_2(\Xi^x_s)\Big) ds -\int_t^T Z^{2,x}_sdB_s.
\ea\right.
\eea
Clearly \reff{MFGVC-FBSDE1}-\reff{MFGVC-FBSDE2} is the system \reff{Xxi}-\reff{X2x} with $\Phi_1=\Phi_2$ for $\Phi=b, \si, f, g$. To be precise, for $i=1,2$, and denoting $\zeta'=(x,y^1, y^2, \mu)$ for $\zeta=(x, y, z, \mu)$, we have
\bea
\label{MFGVC-coefficients}
&d_x^i= d, \q d_y^i = 1+ d+d^2,\q b_i(\zeta) = \pa_{z^1} H(\zeta'),\q \si^0_i(\zeta) = \si^0(x, \mu), \q \si_i^1(\zeta) = \big(2\pa_{\g} H(\zeta')\big)^{1\over 2}\nonumber\\
&g_i^0(\zeta) = g(x, \mu),\q g_i^1(\zeta) = \pa_x g(x, \mu),\q g_i^2(\zeta) = \pa_{xx} g(x,\mu),
\\
&f_i^0(\zeta)= \cH^0(\zeta'), \q  f_i^1(\zeta)= \cH^1\big(\zeta', ~\cI_1(\zeta)\big), \q f_i^2(\zeta)= \cH^2\big(\zeta', ~ \cI_0(\zeta), \cI_1(\zeta), \cI_2(\zeta)\big). \nonumber
\eea

We now establish rigorously the well-posedness of the FBSDEs (\ref{MFGVC-FBSDE1})-(\ref{MFGVC-FBSDE2}) as well as the corresponding master equation:
\bea
\label{MFGVC-master}
\left.\ba{lll}
\dis \pa_t V  +  H(x,\partial_x V, \pa_{xx} V, \mu) + \frac{1}{2} \pa_{xx} V : \sigma^0(\sigma^0)^\top + \dbM V =0, \q V(T,x,\mu) = g(x,\mu),\q \mbox{where}\\
\dis \dbM V(t,x,\mu)  :=\int_{\dbR^d}\int_{\dbR^d}\Big[ \pa_\mu V(t, x, \mu, \tilde x)\cd \pa_{z^1} H\big(\tilde x,\pa_x V(t, \tilde x, \mu),\pa_{xx} V(t, \tilde x, \mu),\mu\big) \\
\dis \qq\qq +\pa_{\tilde x\mu}  V(t,x, \mu, \tilde x) : \pa_{\gamma}H\big(\tilde x,\pa_xV(t,\tilde x,\mu),\pa_{xx}V(t,\tilde x,\mu),\mu\big)\\
\dis \qq\qq+\frac{1}{2}\pa_{\tilde x\mu}  V(t,x, \mu, \tilde x) : \sigma^0(\sigma^0)^\top(\tilde x, \mu) +\pa_{x\mu}V(t,x,\mu,\tilde x):  \sigma^{0}(\sigma^{0})^\top(\tilde x, \mu)\\
\dis\qq\qq +\frac{1}{2}\pa_{\mu\mu}V(t,x,\mu,\tilde x,\bar x)\sigma^0(\tilde x, \mu)(\sigma^0)^\top(\bar x, \mu) \Big] \mu(d\tilde x)\mu(d\bar x).
\ea\right.
\eea
 We first specify the technical conditions. For $k\ge 0$ and a function $\Phi$ on $[0, T]\times \dbR^n \times \cP_2(\dbR^d)$,  with appropriate $n$-dimensional state variable $\bx$, let $\cC^{k,2}$ denote the set of those $\Phi$ such that $\Phi, \pa_\bx \Phi, \pa_{\bx\bx}\Phi \in \cC^k$, and $\cC^{k,2}_b$ denote the subset such that all the involved derivatives are bounded. Moreover, let $\|\Phi\|$ denote the uniform norm of $\Phi$ and $\|\Phi\|_2:= \|\Phi\| + \|\pa_\bx \Phi\|+ \|\pa_{\bx\bx}\Phi\|$.

\begin{assum}
\label{assum-MFGVC} Fix some integer $k\geq 2$.

\ms
\no(i) The Hamiltonian $H$ in \reff{MFGVC-H} admits a  maximizer $\phi$.

\ms

\no(ii) For $\Phi = g, \si^0$:  $\Phi\in \cC^{k,2}_b$ with $\bx = x$, and $\|\pa_{x}\Phi\|_2, \|\pa_{\mu}\Phi\|_2\leq L_\Phi$.

\ms
\no(iii) $H\in \cC^{k,2}$ with $\bx = (x, z^1, \g)$, and for any $R>0$, there exists $L(R)>0$ such that $\|\pa_{\bx}H\|_2$, $\|\pa_{\mu}H\|_2\leq L(R)$ and  all the involved higher order derivatives are bounded,  whenever $|z^1|, |\g|\le R$.

\ms
\no (iv) $\sigma^1$ is positive definite, and $|\si^0|, |(\si^1)^{-1}| \le L_0$.
\end{assum}

\begin{thm}\label{thm-MFGVC}
Let Assumption \ref{assum-MFGVC} hold and denote $R_0:= L_g + 1$, $R_1 := 2R_0 L_0$, $R_2:= R_0\vee R_1+2$. There exists a constant $\e_0>0$, depending only on $d$,  $L_g$, $L_{\sigma^0}$, $L_0$, and $L(R_2)$, such that the following hold whenever $0<T\leq\e_0$:

\no(i) The FBSDEs \reff{MFGVC-FBSDE1}-\reff{MFGVC-FBSDE2} are well-posed and the MFGVC admits an  MFE $(\a^*, \bm^*)$ at $(0,\mu)$. Moreover, $\bm^*$ is unique.

\no(ii) The master equation \eqref{MFGVC-master}  has a unique classical solution $V\in\cC^{k,2}_b$, and it holds that:
\bea
\label{MFGVC-FK}
\left.\ba{c}
\bm^*_t = \cL_{X^\xi_t|\cF^0_t},\q \a^*_t(x) = \phi\big(t, x, \pa_x V(t,x,\bm^*_t), \pa_{xx} V(t,x,\bm^*_t), \bm^*_t\big);\\
Y^{i,\xi}_t = \pa^{(i)}_x V(t, X^\xi_t, \bm^*_t),\q Z^{i, \xi}_t = \cI^{\pa_x^{(i)} V}_t(\Xi^{'\xi}_t),  \\
Y^{i,x}_t = \pa^{(i)}_x V(t, X^x_t, \bm^*_t),\q Z^{i, x}_t = \cI^{\pa_x^{(i)} V}_t(\Xi^{'x}_t), \\
\mbox{where}\q \cI^{\f,1}_t(\zeta') := \pa_x \f(t, x, \bm^*_t)\big(2\pa_{\g} H(\zeta')\big)^{1\over 2} ,\\
\dis \cI^{\f,0}_t(\zeta') :=  \pa_x \f(t, x, \bm^*_t) \si^0(x, \bm^*_t)+  \int_{\dbR^d} \pa_\mu \f(t, x, \bm^*_t, \tilde x)\si^0(\tilde x, \bm^*_t)\bm^*_t(d\tilde x).
 \ea\right.
\eea
Here $\pa^{(i)}_x V$ denotes the $i$-th derivative of $V$ with respect to $x$, $i=0,1,2$.
\end{thm}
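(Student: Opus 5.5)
The plan mirrors the proof of Theorem \ref{thm-EMFG}: truncate, apply Theorem \ref{thm-Ck}, remove the truncation a posteriori using derivative bounds, and then verify the equilibrium.

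\textbf{Truncation.} The coefficients in \reff{MFGVC-coefficients} are only locally Lipschitz in $(y^1,y^2)$ through $H$ and its derivatives. The generator $f^2$ is in addition only locally Lipschitz in $z$, through $\cI_0$ and the quadratic terms in $\cH^2$, such as $\pa_{z^1z^1}H:\pa_{x_ix}u(\pa_{x_jx}u)^\top$ and the products of $\pa_{xx}u$ with $\pa_{xxx}u$. Fix $R=R_2$ and replace $y^1,y^2$ everywhere by $I_R(y^1),I_R(y^2)$, with $I_R$ as in \reff{truncation}. Since $\pa_\g H$ is positive definite with inverse bounded by $L_0$-type constants from Assumption \ref{assum-MFGVC}(iv), the truncated $\si^1_i=(2\pa_\g H)^{1/2}$ is smooth with bounded derivatives, and so is $(2\pa_\g H)^{-1/2}$ in $\cI_0$.

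With $y$ truncated, $\cH^2$ is affine in the $Z$-type arguments $\cI_0,\cI_1,\cI_2$, apart from the term $\sum\pa_{\g^{kj}\g}H:(\pa_{jkl}u\,\pa_{ixx}u)$, which is quadratic in $\cI_0$. To handle it I would also truncate $\cI_0$, i.e.\ the $z^{2,1}$-component, at a level $R_3$ tied to a bound on $\pa_{xxx}U$. After truncation, $(b_i,\si_i,f_i,g_i)$ satisfy Assumption \ref{assum-1}, with $L_1$ depending on $d,L_g,L_{\si^0},L_0,L(R_2)$, and \reff{assum-Ckb} holds because $\pa_x g,\pa_{xx}g\in\cC^k_b$ by (ii). Since $\si_i$ depends on $y$ but not on $z$, the structure of \reff{Xxi} is respected. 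Theorem \ref{thm-Ck} then gives $\e_0$, well-posedness, and $U=U_1=U_2$ (equal coefficients) in $\cC^k_b$, with $U=(U^0,U^1,U^2)$ satisfying \reff{patU}.

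\textbf{Removing truncation and consistency.} I would first show $U^1=\pa_xU^0$ and $U^2=\pa_xU^1$. By Step 1 of Section \ref{sect-bootstrap}, $\pa_xU^0$ is represented through the variational FBSDE \reff{tdFBSDEx}. Differentiating the $Y^0$-equation in $x$ reproduces the $Y^1$-equation of \reff{MFGVC-FBSDE2}, because $\cH^1$ was built by differentiating $H+\frac12\pa_{xx}u:\si^0(\si^0)^\top+\si^0:\pa_xv$. Uniqueness of the linear FBSDE then gives $\td_xY^{0}=Y^{1}\td_xX$, and similarly for $Y^2$.

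This identification is the main obstacle. It needs \reff{randomFK} to translate $Z$ into $\pa_xU\,\si$, together with \reff{paxv}. I would instead verify it at the PDE level: differentiate the equation \reff{patU} for $U^0$ in $x$ and observe that $(\pa_xU^0,\pa_{xx}U^0)$ and $(U^1,U^2)$ solve the same equation with the same terminal data, hence coincide by the uniqueness of decoupling fields, i.e.\ uniqueness of the FBSDE.

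Next, Theorem \ref{thm-pax} bounds $|\pa_xU^j|\le R_0$. This controls $Y^1,Y^2$ by $R_0$ once $|U^1|,|U^2|$ are bounded by $\|\pa_xg\|,\|\pa_{xx}g\|\le L_g$ plus $CT$. It also controls $Z^{j,1}=\pa_xU^j\si^1$ by $R_0L_0\cdot 2\le R_1$. So for small $T$ all truncations are inactive, $R_2=R_0\vee R_1+2$, and the solutions satisfy the original \reff{MFGVC-FBSDE1}-\reff{MFGVC-FBSDE2}.

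\textbf{Master equation and MFE.} Substituting $U^1=\pa_xU^0$, $U^2=\pa_{xx}U^0$ and \reff{MFGVC-paH} into $\dbL U^0$ from \reff{LU1}-\reff{LU2} gives $H+\frac12\pa_{xx}V:\si^0(\si^0)^\top+\dbM V$, using $b^\phi=\pa_{z^1}H$ and $\frac12\si^1(\si^1)^\top=\pa_\g H$. So $V:=U^0\in\cC^{k,2}_b$ solves \reff{MFGVC-master}. Given $\bm^*=\cL_{X^\xi|\dbF^0}$, the function $u^{\bm^*}_t(x)=V(t,x,\bm^*_t)$ solves the BSPDE \reff{MFGVC-BSPDE} (cf.\ Remark \ref{rem-Ito}). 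A verification argument then shows that $\a^*$ in \reff{MFGVC-FK} is optimal, and the consistency condition \reff{m*optimal} holds by construction, so $(\a^*,\bm^*)$ is an MFE.

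The identities in \reff{MFGVC-FK} follow from \reff{YU} and the It\^o formula \reff{Ito}. Uniqueness of $\bm^*$ follows because any MFE yields, via the derivation above, a solution of \reff{MFGVC-FBSDE1}, and that FBSDE is unique. The optimizer $\phi$ may be non-unique, which is why only $\bm^*$ is claimed unique. Uniqueness of the classical solution in $\cC^{k,2}_b$ follows by the standard argument: any such solution induces decoupling fields for the FBSDEs, which are unique.
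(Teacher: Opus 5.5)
Your architecture matches the paper's: truncate, apply Theorem \ref{thm-Ck}, remove the truncation using the a priori bounds of Theorems \ref{thm-pax} and \ref{thm-pamuUrep}, then argue as for Theorem \ref{thm-EMFG}. However, the truncation step fails as written. You truncate only $y^1,y^2$ and $\cI_0$, and then assert that the coefficients satisfy Assumption \ref{assum-1}. They do not. The generator $f^1$ contains $\pa_{x_i}\si^0(x,\mu):\cI_1$ with $\cI_1=z^{1,0}-y^2\si^0(x,\mu)$, and $f^2$ contains $\pa_{x_ix_j}\si^0:\cI_1$ and $\pa_{x_i}\si^0:\cI_2$ with $\cI_2=z^{2,0}-\cI_0\si^0$. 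These terms are Lipschitz in $z$, but their $x$- and $\mu$-derivatives (e.g. $\pa_x\pa_{x_i}\si^0:z^{1,0}$, or $\pa_\mu\pa_{x_i}\si^0$ contracted with $z^{1,0}$) grow linearly in $z^{1,0}$ and $z^{2,0}$. So the bounds $|\pa_\pi f_i|,|\pa_\mu f_i|\le L_1$ and \reff{assum-Ckb} fail, and Theorem \ref{thm-Ck} cannot be invoked.

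The paper also truncates $z^{1,0}$ and $z^{2,0}$. It removes these truncations using $Z^{i,\xi,0}_t=\pa_xU^i\si^0+\int\pa_\mu U^i(\tilde x)\si^0(\tilde x)\bm^*_t(d\tilde x)$. This gives $|Z^{i,\xi,0}_t|\le 2R_0L_0=R_1$ from $|\si^0|\le L_0$ and $|\pa_xU|,|\pa_\mu U|\le R_0$. That is what $R_1$ in the statement is for, and it needs the bound on $\pa_\mu U$ from Theorem \ref{thm-pamuUrep}, which you never use. You instead place the a posteriori bound on $Z^{j,1}=\pa_xU^j\si^1$, and that bound is unjustified. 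Assumption \ref{assum-MFGVC}(iv) bounds $\si^0$ and $(\si^1)^{-1}$, not $\si^1=(2\pa_\g H)^{1/2}$; this is precisely why one truncates $\cI_0$ rather than $z^{2,1}$.

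The $\cI_0$-truncation level must also be fixed before choosing $\e_0$. Take it to be $R_2$, using $\cI_0(\Xi^\xi_t)=\pa_xU^2(t,X^\xi_t,\bm^*_t)$ from \reff{randomFK} and $|\pa_xU^2|\le R_0$, which does not depend on the truncation. Otherwise ``$R_3$ tied to a bound on $\pa_{xxx}U$'' is circular, and $\e_0$ would not depend only on $L(R_2)$.

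You are right that $U^1=\pa_xU^0$, $U^2=\pa_{xx}U^0$ needs an argument; the paper only asserts it. But your claim that $(\pa_xU^0,\pa_{xx}U^0)$ and $(U^1,U^2)$ ``solve the same equation'' is not accurate. After differentiating \reff{patU} for $U^0$ in $x$, the coefficients $\pa_{z^1}H$, $\pa_\g H$, $\cH^0$ are still evaluated at $(U^1,U^2)$. What you actually get is a linear homogeneous system for the differences $\pa_xU^0-U^1$ and $\pa_{xx}U^0-U^2$, with zero terminal data. You would still have to prove uniqueness for that system, for instance by applying It\^o's formula along $X$ to turn it into a linear BSDE with zero terminal value; the $x$-gradients of the differences can be absorbed into the $Z$-terms because $(\si^1)^{-1}$ is bounded. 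Moreover, differentiating \reff{patU} twice in $x$ requires four $x$-derivatives of $U^0$. Theorem \ref{thm-Ck} provides these only when $k\ge 4$, so for $k=2,3$ you need an approximation argument.
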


\proof Recall \reff{paxv} and the truncation function $I_R$ in \reff{truncation}.  For the coefficients $\Phi$ in \reff{MFGVC-coefficients}, noting that, besides $(t,x,\mu)$, they involve only $y^1$, $y^2$, $z^{1,0}$, $z^{2,0}$, and $z^{2,1}$, in particular, the involvement of $z^{2,1}$ is always through $\cI_0(\zeta)$. We define $\Phi^R(t,\zeta)$ by replacing $y^1$, $y^2$, $z^{1,0}$, $z^{2,0}$, with their truncations: $I_R(y^1)$, $I_R(y^2)$, $I_R(z^{1,0})$, $I_R(z^{2,0})$, and replacing $z^{2,1}$ with $I_R(\cI_0(\zeta))~\! \big(2\pa_\g H(\zeta')\big)^{1\over 2}$.\footnote{This extra effort is to allow $\si^1$ to be unbounded. When $\si^1$ is bounded, which implies $\big(2\pa_\g H(\zeta')\big)^{1\over 2}$ is bounded, we can simply truncate $z^{2,1}$ directly.} Similarly let $\cI^R_i(\zeta)$ denote the truncated version of $\cI_i(\zeta)$:
\beaa
&\cI^R_0(\zeta) = I_R\Big( z^{2,1} \big(2\pa_\g H(\zeta')\big)^{-{1\over 2}}\Big),\q \cI^R_1(\zeta) := I_R(z^{1,0}) - I_R(y^2)\si^0(x, \mu),\\
&\cI^R_2(\zeta):= I_R(z^{2,0}) -  \cI^R_0(\zeta)\si^0(x, \mu).
\eeaa
  Under Assumption \ref{assum-MFGVC}, especially noting that Assumption \ref{assum-MFGVC} (iv) implies $|\big(2\pa_\g H(\zeta')\big)^{-{1\over 2}}|\le L_0$, we can easily verify that $\cI^R_i\in\cC^k_b$, $i=0,1,2$, and hence, by \reff{MFGVC-cH},  for all the functions $\Phi$ in \reff{MFGVC-coefficients}, we have $\Phi^R\in \cC^k_b$, with the bounds of the first order derivatives depending on $R$. 

 Now consider FBSDEs \reff{MFGVC-FBSDE1}-\reff{MFGVC-FBSDE2} with coefficients $\Phi^{R_2}$. Applying Theorem \ref{thm-Ck}, there exists a desired $\e_0>0$ such that these FBSDEs are well-posed whenever $T\le \e_0$, with the solutions  denoted as $(\Xi^{\xi}, \Xi^{x})$, and the corresponding functions $U:= U_1=U_2 \in \cC^k_b$. By Theorem \ref{thm-pax} (ii) and Theorem \ref{thm-pamuUrep} (ii), we have $|\pa_x U|, |\pa_\mu U|\le L_g+1 = R_0$.  Similarly, since $|\pa_x g|, |\pa_{xx} g|\le L_g$, one can obtain $|Y^{i,\xi}_t|\le R_0$, $i=1,2$, for a possibly smaller $\e_0$ depending on the same parameters. Note that, for $i=1,2$,
  \beaa
 &&Z^{i,\xi, 1}_t =\pa_x U^{i}(t, X^{\xi}_t, \bm^{*}_t) \big(2\pa_{\g} H(\Xi^{'\xi})\big)^{1\over 2},\q  \cI_0(\Xi^\xi_t) = \pa_x U^{2}(t, X^{\xi}_t, \bm^{*}_t),\\
 &&Z^{i,\xi, 0}_t =  \pa_x U^{i}(t, X^{\xi}_t, \bm^{*}_t)\si^0(X^{\xi}_t, \bm^{*}_t) +  \int_{\dbR^d} \pa_\mu U^{i}(t, X^{\xi}_t, \bm^{*}_t, \tilde x)\si^0(\tilde x, \bm^{*}_t)\bm^*_t(d\tilde x).
 \eeaa
 Then we have  $| \cI_0(\Xi^\xi_t) |\le R_0$ and $|Z^{i,\xi, 0}_t| \le 2L_0 R_0=R_1$. Similarly,  $| \cI_0(\Xi^x_t) |\le R_0$  and $|Z^{i,x, 0}_t| \le R_1$.
This implies that, for all the coefficients $\Phi$ in \reff{MFGVC-coefficients}, $\Phi^{R_2}(\Xi^\xi) = \Phi(\Xi^\xi)$ and $\Phi^{R_2}(\Xi^x) = \Phi(\Xi^x)$.
That is, $(\Xi^\xi, \Xi^{x})$ satisfy FBSDEs \reff{MFGVC-FBSDE1}-\reff{MFGVC-FBSDE2} with the original coefficients in \reff{MFGVC-coefficients}. Then the rest of the proof follows essentially the same arguments as in Theorem \ref{thm-EMFG}. In particular, in this case we have $U_1 = U_2 = (U^0, U^1, U^2)\in C^k_b$ with  $U^1 = \pa_x U^0$, $U^2=\pa_{xx} U^0$, and $V=U^0$. Then clearly $V\in \cC^{k,2}_b$.
\qed

\begin{rem}
\label{rem-MFGVC1}
(i) We do not assume the uniqueness of the optimizer $\phi$ in Assumption \ref{assum-MFGVC}. Consequently, the MFE $\a^*$ is in general not unique (for example in the trivial case that all the coefficients do not depend on the control). However, by Theorem \ref{thm-MFGVC}, the MFE $\bm^*$ is unique.

\no(ii) The master equation \reff{MFGVC-master} does not depend on $\si^1$ explicitly. For simplicity in this section we assume $\si^1$ is positive definite and thus \reff{MFGVC-paH} uniquely determines $\si^{\phi,1} = (2\pa_\g H)^{1\over 2}$, even when $\phi$ is not unique. Alternatively, we may allow $\si^1$ to be non-symmetric (but still nondegenerate), but $\phi$ is unique and $\si^{\phi,1}$ has desired regularity. In this case all the results in this section remain true, after obvious modifications, in particular the master equation will remain the same. Moreover, in this case $\a^*$ will also be unique.
\end{rem}

\begin{rem}
\label{rem-MFGVC2}
The problem is much harder when $\si^0$ is also controlled. In this case the Hamiltonian $H$ in the BSPDE \reff{MFGVC-BSPDE} will take the form $H\big(x, \pa_x u^{\bm}_t, \pa_{xx} u^{\bm}_t, \pa_x v^{\bm}_t, \bm_t\big)$. In particular, it involves the term $\pa_x v^\bm$. Again denote $Y^{i,\xi}_t = \pa^{(i)}_x u^{\bm^*}_t(X^\xi_t)$. Note that
\beaa
& Z^{1,\xi,0}_t = \pa_{xx}u^{\bm^*}_t(X^\xi_t)\sigma^{*0}_t + \pa_x v^{\bm^*}_t(X^\xi_t), \q\mbox{where}\\
&\sigma^{*0}_t := \si^0(X^\xi_t, \a^{\bm^*}_t(X^\xi_t), \bm^*_t)) = \pa_q H\Big(X^\xi_t, \pa_x u^{\bm^*}_t, \pa_{xx} u^{\bm^*}_t, \pa_x v^{\bm^*}_t, \bm^*_t\Big).
\eeaa
Here $\pa_q H$ refers to the derivative of $H$ with respect to $\pa_x v^{\bm^*}$. Then
\beaa
\pa_x v^{\bm^*}_t(X^\xi_t) = Z^{1,\xi,0}_t - Y^{2,\xi}_t\pa_q H\Big(X^\xi_t, Y^{1,\xi}_t, Y^{2,\xi}_t, \pa_x v^{\bm^*}_t(X^\xi_t), \bm^*_t\Big).
\eeaa
It requires very strong technical conditions to  solve the above equation to obtain $\pa_x v^{\bm^*}_t$. Moreover, $\pa_x v^{\bm^*}_t$ involves $Z^{1,\xi,0}_t$, consequently $\si^*$ will involve $Z^{1,\xi,0}_t$, which makes the resulted FBSDEs much harder to analyze. We also note that, it does not seem possible to circumvent this difficulty  by increasing further the dimension of $Y^\xi$.
\end{rem}

\section{Mean field games with a major player}
\label{sect-MFGM}
\setcounter{equation}{0}

In this section we study mean field games with a major player (MFGM). This game involves one major player with state $X^0$, and a population of minor players with state $X^1$. The dynamics of $X^1$ relies on the major player's state $X^0$, while the dynamics of $X^0$ relies on the aggregate behavior of all minor players, namely the (conditional) law of $X^1$. Note that all minor players rely on $X^0$, then it is natural to consider the conditional law of $X^1$, conditional on $X^0$, even if we do not consider additional common noise. For simplicity in this section we do not consider common noise anymore. Moreover, we consider drift controls only.

Consider the probabilistic setting in the beginning of Section \ref{sect-setting} again. However, here $B^0$, $B^1$ stand for the randomness for the major player and the representative minor player, respectively. Let $A_0, A_1$ denote the control set of the major and minor players, respectively. The MFGM will involve the following data, and again we may omit the time variable $t$: for $i=0,1$ and denoting $d_0':= d_0$, $d_1':= d_0+d_1$,
\beaa
&& \si_i: [0, T]\times \dbR^{d_i'} \times \cP_2(\dbR^{d_1}) \to \dbR^{d_i\times d_i},\q g_i: \dbR^{d_i'} \times \cP_2(\dbR^{d_1})\to \dbR,\\
&&(b_i, f_i): [0, T]\times \dbR^{d_i'} \times A_i \times \cP_2(\dbR^{d_1}) \to (\dbR^{d_i}, \dbR).
\eeaa
Here $(b_0, \si_0, f_0, g_0)$ are for the major player and $(b_1, \si_1, f_1, g_1)$ are for the minor players.

We emphasize that the conditional law of $X^1$ is conditional on $X^0$, we thus consider $\bm\in \cM$ as a mapping as follows. Denote $\dbX^0:= C([0, T]; \dbR^{d_0})$, we consider mappings $\bm: \dbX^0 \to C([0, T]; \cP_2(\dbR^{d_1}))$, which are Lipschitz continuous in $\bx$,  and adapted in the sense that $\bm_t(\bx) = \bm_t(\bx_{[0, t]})$ for any $(t, \bx)\in [0, T]\times \dbX^0$. Let  $\cA_0$ denote the the set of major player's admissible controls  $\a^0: [0, T]\times \dbX^0 \to A_0$, which are progressively measurable and adapted in $t$. Let  $\cA_1$ denote the the set of representative minor player's admissible controls  $\a^1: [0, T]\times \dbX^0 \times \dbR^{d_1}\to A_1$, which are also progressively measurable and adapted in $t$. We note that as before we assume $\a^1$ depends on $X^1$ in a state dependent manner, which is without loss of generality since in the end the MFE will be state dependent. However, since $X^0$ will play the role of common noise, it is more natural to allow $\bm, \a^0, \a$ to depend on the paths of $X^0$.

Now given  $x=(x_0, x_1)\in \dbR^{d_0+d_1}$,  $\a=(\a^0, \a^1)\in \cA:= \cA_0\times \cA_1$, and $\bm\in \cM$, consider the following dynamics: for $X^{\bm, 0} = X^{\bm, 0, x_0, \a^0}$, $X^{\bm,1} = X^{\bm, 1, x,  \a}$, and $X^\bm = (X^{\bm,0}, X^{\bm, 1})$,
\bea
\label{MFGM-X}
\left.\ba{lll}
\dis X^{\bm,0}_t = x_0 +\int_0^t \si_0 b_0\big(X^{\bm, 0}_s, \a^0_s(X^{\bm, 0}), \bm_s(X^{\bm, 0})\big) ds + \int_0^t \si_0\big(X^{\bm, 0}_s, \bm_s(X^{\bm, 0})\big)dB^0_s,\\
\dis X^{\bm, 1}_t = x_1 +\int_0^t \si_1 b_1\big(X^\bm_s, \a^1_s(X^{\bm,0}, X^{\bm,1}_s), \bm_s(X^{\bm,0})\big) ds + \int_0^t \si_1\big(X^\bm_s, \bm_s(X^{\bm, 0})\big)dB^1_s.
\ea\right.
\eea
Here we omit the variable $t$ again. Note that we may consider weak solutions whenever needed. The major player and the representative minor player aim to minimize the following utilities:
\bea
\label{MFGM-J}
\left.\ba{lll}
\dis J_0( \mathbf{m};  x_0, \a^0) := \dbE\Big[g_0(X^{\bm,0}_T, \bm_T(X^{\bm,0})) + \int_{0}^T f_0\big(X^{\bm,0}_s,\alpha^0_s(X^{\bm,0}), \bm_s(X^{\bm,0})\big)ds\Big];\\
\dis J_1( \mathbf{m};  x, \a) := \dbE\Big[g_1(X^\bm_T, \bm_T(X^{\bm,0})) + \int_{0}^T f_1\big(X^\bm_s, \alpha_s(X^{\bm,0}, X^{\bm,1}_s), \bm_s(X^{\bm,0})\big)ds\Big].
\ea\right.
\eea
Moreover, given $\xi_1\in \dbL^2(\cF_0, \dbR^{d_1})$, we introduce: for $X^{\bm, \xi_1} = X^{\bm, 1, x_0, \xi_1, \a}$,
\bea
\label{MFGM-Xxi}
\left.\ba{c}
\dis X^{\bm, \xi_1}_t := \xi_1 +\int_0^t \si_1 b_1\big(X^{\bm,0}_s, X^{\bm,\xi_1}_s, \a^1_s(X^{\bm,0}, X^{\bm,\xi_1}_s), \bm_s(X^{\bm,0})\big) ds \\
\dis+ \int_0^t \si_1\big(X^{\bm,0}_s, X^{\bm,\xi_1}_s,  \bm_s(X^{\bm,0})\big)dB^1_s.
\ea\right.
\eea

\begin{defn}
\label{defn-MFGMMFE}
Given $\cL_{\xi_1} = \mu_1\in \cP_2(\dbR^{d_1})$ and $x_0\in \dbR^{d_0}$, we say $(\a^*, \bm^*)\in \cA\times \cM$ is a mean field equilibrium (MFE) of MFGM \reff{MFGM-X}-\reff{MFGM-J}-\reff{MFGM-Xxi} at $(0,x_0, \mu_1)$ if

(i) Given $\bm^*$, $\a^{0*}$ is optimal for the major player:
\bea
\label{MFGM-a0*optimal}
J_0(\bm^*;  x_0, \a^{0*}) = \inf_{\a^0\in \cA_0} J_0(\bm^*;  x_0, \a^{0}).
\eea

(ii) Given $(\bm^*,\a^{0*})$, $\a^{1*}$ is optimal for the representative minor player:
\bea
\label{MFGM-a1*optimal}
J_1(\bm^*;  x_0, x_1, \a^{0*}, \a^{1*}) = \inf_{\a^1\in \cA_1} J_1(\bm^*;  x_0, x_1, \a^{0*}, \a^1),\q \mbox{for $\mu_1$-a.e. $x_1\in \dbR^{d_1}$}.
\eea

(iii) Given $\a^*$, $\bm^*$ is the conditional law of the minor's population:
\bea
\label{MFGM-m*optimal}
\cL_{X^{\bm^*,1, x_0, \xi_1, \a^*}_t|\cF^{X^{\bm^*,0, x_0, \a^{0*}}_t}} = \bm^*_t(X^{\bm^*, 0, x_0, \a^{0*}}),~ 0\le t\le T,~\mbox{ a.s.}
\eea
\end{defn}
Moreover, when MFE $(\a^*, \bm^*)$ is unique, we define the value  of the MFGM at $(0, x, \mu_1)$:
\bea
\label{MFGM-V1}
V_0(0, x_0, \mu_1) := J_0(\bm^*;  x_0, \a^{0*}),\q V_1(0, x, \mu_1) := J(\bm^*;  x, \a^*).
\eea

\begin{rem}
\label{rem-MFGM}
In this paper we require $\bm$ to be Lipschitz continuous, and we will find MFE such that $\dbF^{X^{\bm^*, 0, x_0, \a^{0*}}}=\dbF^0$. In this sense we call our equilibrium a strong MFE. We may relax the Lipschitz continuity,  and consider weak solutions to the involved SDEs. Then the resulted equilibria are called weak MFE. This will particularly be effective when $\si_0=\si_0(t, x_0)$ does not depend on $\mu_1$ and we can use the weak formulation to formulate the MFG.
\end{rem}

Given $\bm^*$, \reff{MFGM-a0*optimal} and \reff{MFGM-a1*optimal} are standard optimization problems with closed loop controls. Following the standard theory, we can easily obtain:
\bea
\label{MFGM-JY}
\dis J_0(\bm^*;  x_0, \a^{0*}) = Y^{\bm^*,0}_0,\q J_1(\bm^*;  x, \a^{*}) = Y^{\bm^*,1}_0,
\eea
where $\Pi^{\bm^*} = (\Pi^{\bm^*,0}, \Pi^{\bm^*,1})$ solves the following FBSDEs: denoting $\bm^{*0}:= \bm^*(X^{\bm^*,0})$,
\bea
\label{MFGM-BSDE0}
&&\!\!\!\!\!\!\!\!\!\! \dis \left\{\ba{lll}
\dis X^{\bm^*,0}_t = x_0 + \int_0^t \si_0 \pa_{z_0} H_0( X^{\bm^*,0}_s, Z^{\bm^*,0}_s, \bm^{*0}_s) ds + \int_0^t \si_0( X^{\bm^*,0}_s, \bm^{*0}) dB^0_s,\\
\dis Y^{\bm^*,0}_t = g_0(X^{\bm^*,0}_T, \bm^{*0}_T)  + \int_t^T \!\!\!\big[H_0- \pa_{z_0} H_0 \cd Z^{\bm^*,0}_s\big](X^{\bm^*,0}_s, Z^{\bm^*,0}_s, \bm^{*0}_s) ds - \int_t^T \!\!\! Z^{\bm^*,0}_s dB^0_s,
\ea\right.\\
\label{MFGM-BSDE1}
&&\!\!\!\!\!\!\!\!\!\!\dis \left\{\ba{lll}
\dis X^{\bm^*,1}_t = x_1 + \int_0^t \si_1 \pa_{z_1^1} H(X^{\bm^*}_s, Z^{\bm^*,1,1}_s, \bm^{*0}_s) ds + \int_0^t \si_1( X^{\bm^*}_s, \bm^{*0}_s) dB^1_s,\\
\dis Y^{\bm^*,1}_t = g_1(X^{\bm^*}_T, \bm^{*0}_T) + \int_t^T\!\!\! \big[H_1- \pa_{z_1^1} H_1 \cd Z^{\bm^*,1,1}_s\big](X^{\bm^*}_s, Z^{\bm^*,1,1}_s, \bm^{*0}_s) ds - \int_t^T\!\!\! Z^{\bm^*,1}_s dB_s,
\ea\right.
\eea
with the Hamiltonians $H_i$ given by:
\bea
\label{MFGM-H}
\left.\ba{c}
\dis H_0(x_0, z_0, \mu) := \inf_{a_0\in A_0}\big[f_0(x_0, a_0, \mu) + b_0(x_0, a_0, \mu) \cd z_0\big],\\
\dis H_1(x, z^1_1, \mu) := \inf_{a_1\in A_1}\big[f_1(x, a_1, \mu) + b_1(x, a_1, \mu) \cd z_1^1\big].
\ea\right.
\eea
We shall assume $\si_0$ is nondegenerate, and thus in \reff{MFGM-BSDE0} we will have $\dbF^{X^{\bm^*,0}} = \dbF^0$.

The fixed point property \reff{MFGM-m*optimal}, however, does not transform FBSDEs \reff{MFGM-Xxi}-\reff{MFGM-BSDE0}-\reff{MFGM-BSDE1} into the form of \reff{Xxi}-\reff{X2x}, and thus prevents us from applying our approach directly. To overcome this difficulty, we introduce an auxiliary problem which replaces $\bm^*$ with open loop ones, namely we replace it with $\cL_{X^{\bm^*,1, x_0, \xi_1, \a^*}_t|\cF^0_t}$ in \reff{MFGM-m*optimal}. Note also that \reff{MFGM-Xxi} is coupled with the $X^{\bm,0}$ in \reff{MFGM-X}. We thus introduce the following systems of FBSDEs: given $\xi_0\in \dbL^2(\cF_0,\dbR^{d_0})$ and $\xi=(\xi_0, \xi_1)$,
 \bea
\label{MFGM-BSDE'}
\left\{\ba{lll}
\dis X^{\xi,0}_t = \xi_0 + \int_0^t  \si_0\pa_{z_0} H_0(X^{\xi,0}_s, Z^{\xi,0}_s,  \cL_{X^{\xi,1}_s|\cF^{0}_s}) ds + \int_0^t \si_0(X^{\xi,0}_s, \cL_{X^{\xi,1}_s|\cF^{0}_s}) dB^{0}_s,\\
\dis X^{\xi,1}_t = \xi_1  +\int_0^t \si_1\pa_{z^1} H_1(X^{\xi}_s, Z^{\xi,1,1}_s, \cL_{X^{\xi,1}_s|\cF^{0}_s}) ds + \int_0^t \si_1(X^{\xi}_s, \cL_{X^{\xi,1}_s|\cF^{0}_s}) dB^1_s,\\
\dis Y^{\xi,0}_t = g_0(X^{\xi,0}_T,  \cL_{X^{\xi,1}_T|\cF^{0}_T})  - \int_t^T Z^{\xi,0}_s dB^{0}_s\\
\dis \qq + \int_t^T \Big[H_0 - \pa_{z_0}H_0 \cd Z^{\xi,0}_s \Big](X^{\xi,0}_s, Z^{\xi,0}_s,  \cL_{X^{\xi,1}_s|\cF^{0}_s})ds,\\
\dis Y^{\xi,1}_t = g_1(X^{\xi}_T, \cL_{X^{\xi,1}_T|\cF^{0}_T}) - \int_t^T Z^{\xi,1}_s dB_s\\
\dis \qq + \int_t^T \Big[H_1 - \pa_{z^1} H_1 \cd Z^{\xi,1,1}_s\Big](X^{\xi}_s, Z^{\xi,1,1}_s,  \cL_{X^{\xi,1}_s|\cF^{0}_s}) ds,
\ea\right.
\eea
and, denoting $\bm^{*0'}_t:= \cL_{X^{\xi,1}_t|\cF^{0}_t}$,
\bea
\label{MFGM-BSDE'0}
&&\!\!\!\!\!\!\!\!\!\! \dis \left\{\ba{lll}
\dis X^{x,0}_t = x_0 + \int_0^t \si_0 \pa_{z_0} H_0( X^{x,0}_s, Z^{x,0}_s, \bm^{*0'}_s) ds + \int_0^t \si_0( X^{x,0}_s, \bm^{*0'}) dB^0_s,\\
\dis Y^{x,0}_t = g_0(X^{x,0}_T, \bm^{*0'}_T)  + \int_t^T \Big[H_0- \pa_{z_0} H_0 \cd Z^{x,0}_s\Big](X^{x,0}_s, Z^{x,0}_s, \bm^{*0'}_s) ds - \int_t^T Z^{x,0}_s dB^0_s,
\ea\right.
\eea
\bea
\label{MFGM-BSDE'1}
&&\!\!\!\!\!\!\!\!\!\!\dis \left\{\ba{lll}
\dis X^{x,1}_t = x_1 + \int_0^t \si_1 \pa_{z_1^1} H(X^{x}_s, Z^{x,1,1}_s, \bm^{*0'}_s) ds + \int_0^t \si_1( X^{x}_s, \bm^{*0'}_s) dB^1_s,\\
\dis Y^{x,1}_t = g_1(X^{x}_T, \bm^{*0'}_T) + \int_t^T \Big[H_1- \pa_{z_1^1} H_1 \cd Z^{x,1,1}_s\Big](X^{x}_s, Z^{x,1,1}_s, \bm^{*0'}_s) ds - \int_t^T Z^{x,1}_s dB_s.
\ea\right.
\eea
 Now the FBSDEs \reff{MFGM-BSDE'}-\reff{MFGM-BSDE'0}-\reff{MFGM-BSDE'1} are in the form of \reff{Xxi}-\reff{X2x}, with  $\Phi_1=\Phi_2$ for $\Phi=b, \si, f, g$. To be precise, for $i=1,2$, and  $\zeta=(x,y, z, \mu)$, where $\mu = \cL_{\xi}$ and $\mu_1:= \cL_{\xi_1}$  denoting the second marginal of $\mu$,  we have
\bea
\label{MFGM-coefficients}
\left.\ba{c}
 d_x^i= d_0+d_1, \q d_y^i = 2,\\
b^0_i(\zeta) =\si_0(x_0,\mu_1) \pa_{z_0} H_0(x_0, z_0, \mu_1),\q b^1_i(\zeta) = \si_1(x, \mu_1) \pa_{z^1} H_1(x, z_1^{1}, \mu_1), \\
\si^0_i(\zeta) = (\si_0(x_0,\mu_1), 0), ~ \si_i^1(\zeta) = (0, \si(x,\mu_1)),\q g_i^0(\zeta) = g_0(x_0, \mu_1),~ g_i^1(\zeta) = g(x, \mu_1), \\
f_i^0(\zeta)\!=\! H_0(x_0, z_0, \mu_1) - \pa_{z_0} H_0(x_0, z_0, \mu_1) \!\cd\! z_0, ~ f_i^1(\zeta)\! =\! H_1(x, z_1^{1}, \mu_1) - \pa_{z^1_1}H_1(x, z_1^{1}, \mu_1)\!\cd\! z^1_1.
\ea\right.
\eea
Clearly the corresponding value functions are equal: $U_1 = U_2 =: U$, with $U(0, x, \cL_{\xi}) = Y^{x}_0$. Write $U = (U^{0}, U^{1})$. We remark that, despite the possible difference between $\bm^{*0}$ and $\bm^{*0'}=\cL_{X^{\xi,1}|\dbF^{0}}$, the $V_0$ and $V_1$ in \reff{MFGM-V1} correspond to $U^{0}(0, x, \cL_{(x_0, \xi_1)})$ and $U^{1}(0, x, \cL_{(x_0, \xi_1)})$, respectively. In particular, since \reff{MFGM-BSDE'0} and \reff{MFGM-BSDE'1} are decoupled, it is clear that $U^{0}$ does not depend on $x_1$, namely $\pa_{x_1} U^{0}=0$. We thus define
\bea
\label{MFGM-V'}
V'_0(t, x_0, \cL_{\xi_1}) := U^{0}(t, x_0, x_1, \cL_{(x_0, \xi_1)}),\q V'(t, x, \cL_{\xi_1}):= U^{1}(t, x, \cL_{(x_0, \xi_1)}).
\eea

Now given \reff{MFGM-coefficients}, one may derive  from \reff{patU} and \reff{LU1}-\reff{LU2} the master equation system for $U$. We shall remark that, a McKean-Vlasov SDE may lose the flow property when restricting to Dirac measures as in \reff{MFGM-V'}, see e.g. \cite{BLPR} for detailed discussions. However, our FBSDE \reff{MFGM-BSDE'} has a very special structure. That is, given $\xi=(x_0, \xi_1)$, $\Pi^{\xi,0}$ is $\dbF^0$-progressively measurable and the system does not involve the (conditional) distribution of $X^{\xi,0}$. Then actually FBSDE \reff{MFGM-BSDE'}  satisfies the flow property and $V'=(V'_0, V'_1)$ serves as the decoupling field: for $\xi=(x_0, \xi_1)$,
\bea
\label{MFGM-FK'}
Y^{\xi,0}_t = V'_0(t, X^{\xi,0}, \bm^{*0'}),\q Y^{\xi,1}_t = V'_1(t, X^{\xi}_t, \bm^{*0'}).
\eea
We may derive the master equation for $V'$ from that for $U$. However, it is more convenient to derive it directly from \reff{MFGM-FK'} by applying the It\^{o} formula \reff{Ito}. We thus obtain the following coupled system of master equations, written in terms of $V = (V_0, V_1)$:
\begin{equation}
\label{MFGM-master}
\left.\ba{lll}
\dis \!\!\!\!\!\!\!\!\! \pa_t V_0 + \frac{1}{2} \pa_{x_0x_0} V_0 : \sigma_0\sigma_0^\top +  H_{0}(x_0,\partial_{x_0} V_0\sigma_0,\mu_1) + \dbM_0 V =0, ~ V_0(T,x_0,\mu_1) = g_0(x_0,\mu_1),\ms\\
\dis\!\!\!\!\!\!\!\!\! \pa_t V_1 + \frac{1}{2} \pa_{xx} V_1: \sigma_1\sigma_1^\top+{1\over 2}\pa_{x_0x_0} V_1: \sigma_0(\sigma_0)^\top+(\pa_{x_0}V_1)^\top : \sigma_0\pa_{z_0}H_{0}\big(x_0,\pa_{x_0}V_0\sigma_0,\mu_1\big) \\
\dis \q +  H_1(x,\partial_{x_1} V_1\sigma_1,\mu_1) + \dbM_1 V =0, \q V_1(T,x,\mu_1) = g_1(x,\mu_1),
\ea\right.
\end{equation}
where,
\begin{equation*}
\left.\ba{lll}
\dis \dbM_0 V(t, x_0, \mu_1)  :=\int_{\dbR^{d_1}}\Big[\frac{1}{2}\pa_{\tilde x_1\mu_1} V_0(t, x_0, \mu_1, \tilde x_1): \sigma_1\si_1^\top(x_0,\tilde x_1,\mu_1)  \ms\\
\dis+ \pa_{\mu_1} V_0(t, x_0, \mu, \tilde x_1) \cd \sigma_1(x_0,\tilde x_1,\mu_1) \pa_{z^1_1} H_1\big(x_0,\tilde x_1,\pa_{x_1} V_1(t, x_0,\tilde x_1, \mu_1)\sigma_1(x_0,\tilde x_1,\mu_1),\mu_1\big)\Big]\mu_1(d\tilde x_1),\ms\\
\dis \dbM_1 V(t, x,\mu_1)  :=\int_{\dbR^{d_1}} \Big[\frac{1}{2}\pa_{\tilde x_1\mu_1}  V_1(t,x, \mu_1, \tilde x_1):\sigma_1\si_1^\top(x_0,\tilde x,\mu_1)  \ms\\
\dis+\pa_{\mu_1} V_1(t,x, \mu_1, \tilde x_1) \cd \sigma_1(x_0,\tilde x_1,\mu_1)\pa_{z^1_1} H_1\big(x_0,\tilde x_1,\pa_{x_1} V_1(t, x_0,\tilde x_1, \mu_1)\sigma_1(x_0,\tilde x_1,\mu_1),\mu_1\big)\Big] \mu_1(d\tilde x_1).
\ea\right.
\end{equation*}

 We now establish the theory rigorously. We extend all the functions to $(x, z, \mu_1)$.

\begin{assum}
\label{assum-MFGM}
Fix some integer $k\geq 2$.

\ms
\no(i) For $i=0,1$, the Hamiltonian $H_i$ in \reff{MFGM-H} admits a  maximizer $\phi_i$.

\ms

\no(ii) For $\Phi = g, \si$:  $\Phi_i\in \cC^{k}_b$ and $|\pa_{x}\Phi_i|, |\pa_{\mu_1}\Phi_i|\leq L_\Phi$, $i=0,1$.

\ms
\no(iii) For $\Phi= H_0, \pa_{z_0} H_0, H_1, \pa_{z^1_1}H_1$:  $\Phi \in \cC^{k}$, and for any $R>0$, there exists $L(R)>0$ such that $|\pa_{x}\Phi|, |\pa_{z}\Phi|, |\pa_{\mu_1}\Phi|\leq L(R)$ and  all the involved higher order derivatives are bounded,  whenever $|z_0|, |z^1_1|\le R$.

\ms
\no (iv) $\si_0$ is nondegenerate, and $|\si_0|, |\si_1| \le L_0$.
\end{assum}
 Here we need $\si_0$ to be non-degenerate so that $\dbF^{X^0}=\dbF^0$ in \reff{MFGM-X2} below.  As pointed out in Remark \ref{rem-EMFG-nondegenerate}, if $\si_0$ and $\si_1$ are uniformly non-degenerate, we can allow them to be unbounded.

\begin{thm}\label{thm-MFGM}
Let Assumption \ref{assum-MFGM} hold and denote $R_0:= L_g + 1$, $R_1 := R_0 L_0$, $R_2:=  R_1+2$. Then there exists a constant $\e_0>0$, depending only on $d_0$, $d_1$, $L_g$, $L_{\sigma}$, $L_0$, and $L(R_2)$, such that  the following hold whenever $0<T\leq\e_0$:

\no(i) The MFGM  \reff{MFGM-X}-\reff{MFGM-J}-\reff{MFGM-Xxi} admits an  MFE $(\a^*, \bm^*)$ at $(0, x_0, \mu)$, and $\bm^*$ is unique.

\no(ii) The master equation \eqref{MFGM-master}  has a unique classical solution $V = (V_0, V_1)\in\cC^{k}_b$.
\end{thm}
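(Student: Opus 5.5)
The plan mirrors the proofs of Theorems \ref{thm-EMFG} and \ref{thm-MFGVC}, applied to the auxiliary system \reff{MFGM-BSDE'}-\reff{MFGM-BSDE'0}-\reff{MFGM-BSDE'1}. After that, there is an extra verification step that turns the auxiliary open-loop conditional law $\cL_{X^{\xi,1}|\dbF^0}$ into a genuine closed-loop $\bm^*$ depending on the path of $X^0$.

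\textbf{Step 1: truncation and the general theory.} I truncate the $z$-dependence in the coefficients \reff{MFGM-coefficients}. Set $\Phi^{R}$ by replacing $z_0$ and $z^1_1$ with $I_{R}(z_0)$ and $I_R(z^1_1)$, using \reff{truncation}. By Assumption \ref{assum-MFGM}(ii)-(iv), the truncated coefficients satisfy Assumption \ref{assum-1} and \reff{assum-Ckb}, with $L_1$ depending on $L_g,L_\si,L_0,R,L(R)$. With $R=R_2$, Theorem \ref{thm-Ck} then gives $\e_0$ such that the truncated FBSDEs are well-posed for $T\le\e_0$ and $U=U_1=U_2\in\cC^k_b$ satisfies \reff{patU}. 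Theorem \ref{thm-pax} gives $|\pa_xU|\le R_0$, and \reff{randomFK} gives $Z^{\cdot,1}=\pa_xU\si^1$ and the analogue for $Z^{0}$. Hence $|Z_0|,|Z^{1,1}_1|\le R_0L_0=R_1=R_2-2$ along solutions and in the $\cV$ terms of \reff{LU2}. So the truncation is inactive, and $U$ solves \reff{patU} and the FBSDEs with the original coefficients.

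\textbf{Step 2: restriction to $\xi_0=x_0$.} Define $V'$ by \reff{MFGM-V'}. It inherits $\cC^k_b$ regularity from $U$, since $\pa_{\mu_1}$ of $V'$ is obtained from $\pa_\mu U$ by the chain rule along $\mu_1\mapsto\cL_{(x_0,\xi_1)}$. When $\xi_0=x_0$ is deterministic, $\Pi^{\xi,0}$ is $\dbF^0$-adapted and coincides with $\Pi^{x,0}$. The conditional law $\cL_{(X^{\xi,0}_t,X^{\xi,1}_t)|\cF^0_t}=\d_{X^{\xi,0}_t}\otimes\cL_{X^{\xi,1}_t|\cF^0_t}$ is therefore again of the restricted form, which gives the flow property and \reff{MFGM-FK'}. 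Applying \reff{Ito} to $V'(t,X_t,\bm^{*0'}_t)$ and matching drifts with the backward equations yields \reff{MFGM-master}. Uniqueness of classical solutions in $\cC^2_b$ follows the standard argument: any classical solution gives a decoupling field for the (Lipschitz after truncation) FBSDEs, and those are unique.

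\textbf{Step 3: constructing the MFE (main obstacle).} This is the step I expect to be hardest. I take
\[
\a^{0*}_t(\bx)=\phi_0\big(x_0,\pa_{x_0}V_0\si_0(t,\bx_t,\bm_t(\bx)),\bm_t(\bx)\big),
\]
and $\a^{1*}$ analogously using $V_1$. The flow $\bm$ must be defined as a Lipschitz, adapted map of the major path. I propose to solve, for a given path $\bx\in\dbX^0$, the McKean--Vlasov SDE for $X^{1}$ with the major state frozen at $\bx$, and to set $\bm_t(\bx)$ equal to its law. Lipschitz dependence on $\bx$ should follow from $V\in\cC^k_b$ and Gronwall's inequality.

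Since $\si_0$ is nondegenerate, $X^{\xi,0}$ solves an SDE with Lipschitz feedback driven by $B^0$. Then $\dbF^{X^{\xi,0}}=\dbF^0$, so $\cL_{X^{\xi,1}_t|\cF^0_t}=\bm_t(X^{\xi,0})$, which gives \reff{MFGM-m*optimal}. A verification argument for the two control problems \reff{MFGM-a0*optimal}-\reff{MFGM-a1*optimal} with $\bm^*$ fixed then uses \reff{MFGM-FK'} and the Hamiltonian minimization. Uniqueness of $\bm^*$ comes from reversing this: any MFE yields a solution of the auxiliary FBSDE, which is unique by Step 1.
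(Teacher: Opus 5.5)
Your proposal is correct and is essentially the paper's proof. It applies Theorem \ref{thm-Ck} (with the $z$-truncation) to the auxiliary system \reff{MFGM-BSDE'}--\reff{MFGM-BSDE'1} built on $\cF^0$-conditional laws. It then restricts to $\xi_0=x_0$ to get $V'$ and the master system, and defines the closed-loop flow $\bm^*=\Psi$ through the frozen-path McKean--Vlasov SDE \reff{MFGM-X1}. Finally, it uses the nondegeneracy of $\si_0$ to get $\dbF^{X^0}=\dbF^0$ and hence \reff{MFGM-m*optimal}, and obtains uniqueness by identifying any MFE with this construction.

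The only difference is the direction of that identification. The paper solves \reff{MFGM-X2}--\reff{MFGM-X3} forward with the Lipschitz map $\Psi$, then checks by It\^o's formula that $(X,V'(t,X_t,\bm^{*0'}_t))$ solves \reff{MFGM-BSDE'}. You start from the auxiliary solution instead, which requires first proving $\cL_{X^{\xi,1}_t|\cF^0_t}=\Psi_t(X^{\xi,0})$ by conditioning on the $B^0$-path and invoking uniqueness for \reff{MFGM-X1}. That identity is what makes $X^{\xi,0}$ solve a closed-loop SDE in its own path, so in your Step 3 it must come before the conclusion $\dbF^{X^{\xi,0}}=\dbF^0$, not be derived from it.
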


\proof We proceed in three steps.

{\bf Step 1.} Following the proof of Theorem \ref{thm-EMFG} and \ref{thm-MFGVC}, we can easily see that, for appropriate $\e_0>0$ and $T\le \e_0$, FBSDEs \reff{MFGM-BSDE'}-\reff{MFGM-BSDE'0}- \reff{MFGM-BSDE'1} are well-posed, and the corresponding function $U:= U_1=U_2\in \cC^k_b$. Consequently, $V' = (V'_0, V'_1)$ defined by \reff{MFGM-V'} is in $\cC^k_b$ and satisfies the master equation system \reff{MFGM-master}.

{\bf Step 2.} Fix $\xi_1\in \dbL^2(\cF_0, \dbR^{d_1})$. For each $\bx\in \dbX^0$, consider the McKean-Valsov SDE:
\bea
\label{MFGM-X1}
 \left.\ba{c}
 \dis X^{\bx, \xi_1}_t = \xi_1  +\int_0^t \si_1 \pa_{z_1^1} H_1\Big(\bx_s, X^{\bx, \xi_1}_s, \pa_{x_1} V_1'\si_1(s, \bx_s, X^{\bx, \xi_1}_s, \cL_{X^{\bx, \xi_1}_s}),\cL_{X^{\bx, \xi_1}_s}\Big) ds\\
 \dis + \int_0^t \si_1\big(\bx_s, X^{\bx, \xi_1}_s, \cL_{X^{\bx, \xi_1}_s}) dB^1_s.
\ea\right.
\eea
Under our conditions the above SDE is well-posed. Introduce
\bea
\label{Psi}
\Psi_t(\bx) := \cL_{X^{\bx, \xi_1}_t}.
\eea
 Clearly the mapping $\Psi: [0, T]\times \dbX^0\to \cP_2(\dbR^{d_1})$ is adapted and Lipschitz continuous:
\beaa
\sup_t W_2 (\Psi_t(\bx), \Psi_t(\bx')) \le C \|\bx - \bx'\|,\q\mbox{where}\q \|\bx - \bx'\| := \sup_t |\bx_t - \bx'_t|.
\eeaa

Next, fix $x_0\in \dbR^{d_0}$ and  consider SDE
\bea
 \label{MFGM-X2}
 X^{0}_t = x_0 + \int_0^t \!\! \si_0\pa_{z_0} H_0\big(X^{0}_s,  \pa_{x_0}  V_0'  \si_0\big( X^{0},  \Psi_s(X^{0})\big),  \Psi_s(X^{0})\big) ds + \int_0^t\!\! \si_0(X^{0}_s, \Psi_s(X^{0})) dB^{0}_s.
  \eea
  By the desired regularity of $V'$ and the Lipschitz continuity of $\Psi$,  SDE \reff{MFGM-X2} is well-posed. Moreover, since $\si_0$ is nondegenerate, we have $\dbF^{X^{0}} = \dbF^{0}$.
Therefore, $\bm^{*0'}_t:= \Psi_t(X^{0})$ is $\dbF^0$-progressively measurable.

Now consider the following SDE, corresponding to  \reff{MFGM-X1} with $\bx = X^{0}$:
\bea
\label{MFGM-X3}
 X^{1}_t = \xi_1  + \!\!\int_0^t \!\! \si_1 \pa_{z_1^1} H_1\big(X^0_s, X^{1}_s, \pa_{x_1} V_1' \si_1(X^{0}_s, X^{1}_s, \bm^{*0'}_s),\bm^{*0'}_s\big) ds + \!\!\int_0^t\!\! \si_1\big(X^0_s, X^{1}_s, \bm^{*0'}_s\big) dB^1_s.
\eea
 Note that $X^0$ is $\dbF^0$-progressively measurable. Then, by the definition of the mapping $\Psi$, we have
 \bea
 \label{Psi2}
 \bm^{*0'}_t = \cL_{X^1_t|\cF^0_t}.
 \eea
Denote $X=(X^0, X^1)$ and introduce further that
 \beaa
 Y^0_t := V_0'(t, X^0_t,  \bm^{*0'}_t),\q Y^1_t := V_1'(t, X_t, \bm^{*0'}_t), \q Y=(Y^0, Y^1).
 \eeaa
 Since $V_0', V_1'$ satisfy the master equations \reff{MFGM-master}, by applying the It\^o formula \reff{Ito} one may verify straightforwardly that, for appropriate $Z = (Z^0, Z^1)$,  $(X, Y, Z)$ satisfy FBSDE \reff{MFGM-BSDE'}.

Moreover, define the mapping $\bm^*_t(\bx) := \Psi_t(\bx)$. Comparing FBSDEs \reff{MFGM-BSDE0} and \reff{MFGM-BSDE'0}, we see that $(X^{\bm^*,0},  Y^{\bm^*,0}, Z^{\bm^*,0})=(X^0, Y^0, Z^0)$, and $\bm^*_t(X^{\bm^*,0}) = \bm^{*0'}_t$. Then, recalling \reff{MFGM-Xxi} and \reff{MFGM-H}, we see $X^{\bm^*, 1, x_0, \xi_1, \a^*} = X^1$, and thus
\beaa
\cL_{X^{\bm^*,1, x_0, \xi_1, \a^*}_t|\cF^{X^{\bm^*, 0, x_0, \a^{0*}}_t}} = \cL_{X^1_t|\cF^{X^0}_t}= \cL_{X^1_t|\cF^0_t} = \bm^{*0'}_t = \bm^*_t(X^{\bm^*,0}).
\eeaa
This verifies \reff{MFGM-m*optimal}, and therefore, $\bm^*$ is an MFE. Consequently, by \reff{MFGM-V1} we have
\beaa
V(0, x, \mu_1) = Y_0 = V'(0, x, \mu_1).
\eeaa
Then $V = V'$ is in $\cC^k_b$ and satisfies master equation \reff{MFGM-master}.

{\bf Step 3.} Finally we prove the uniqueness. First, the uniqueness of classical solution $V$ to master equation \reff{MFGM-master} follows from standard arguments, for example  by the uniqueness of FBSDEs \reff{MFGM-BSDE'} whose solution can be induced by $V$. Next, for arbitrary MFE $\bm^*$, since it is Lipschitz continuous and $\si_0$ is nondegenerate, by \reff{MFGM-BSDE0} we see that $\dbF^{X^{\bm^*,0}} = \dbF^0$. Then the fixed point property \reff{MFGM-m*optimal} exactly implies that $\bm^* = \Psi$ for the $\Psi$ in \reff{Psi}, and thus $\bm^*$ is unique.
\qed

\section{Proof of Theorem \ref{thm-pax}}
\label{sect-appendix}
\setcounter{equation}{0}

We prove the theorem in six steps. Many FBSDE estimates are standard, cf. \cite[Section 8.2]{Zhang}, and thus we may skip some details. Recall $R_0:= L_g+1$.

{\bf Step 1.}  By standard FBSDE arguments (cf. \cite[Section 8.2]{Zhang}), one can easily see that there exists $\e_0>0$, depending only on $d_{01}, d_x, d_y$, and $L_\Phi$, $\Phi=b,\si, f, g$, such that, whenever $T\le \e_0$,  the FBSDEs \reff{FBSDEx} and \reff{tdFBSDEx} are well-posed.

The well-posedness implies the existence of the decoupling field. That is, there exist $\dbF^0$-progressively measurable random field $u$ and $\dbR^{d_y\times d_x}$-valued process $\Si_t$ such that
\bea
\label{decouple}
Y^x_t = u_t(X^x_t),\q \td Y_t = \Si_t \td X_t.
\eea
In particular, the existence of $\Si_t$ is obviously due to the linear structure of \reff{tdFBSDEx}, in fact, $\Si_t = \hat \td_x \Pi^x$ as in Footnote \ref{matrix}, which depends on $x$ but not on $\D x$.

Moreover, by \reff{tdFBSDEx} one can easily see that $\|\td_x \Pi\|_2 \le C|\D x|$. Since $|\pa_x g|\le L_g$, one can easily see that, for a generic constant $C$ depending on the parameters specified in the lemma,
\beaa
\dbE[\sup_{0\le t\le T} |\td_x X_t|^2] \le (1+CT)|\D x|^2,\q |\td_x Y_0|^2 \le \Big[L_g^2(1+CT) + CT\Big]|\D x|^2.
\eeaa
Then, for a possibly smaller $\e_0$ and for $T\le \e_0$, we have $|\td_x Y_0| \le R_0 |\D x|$. This implies $|\Si_0|\le R_0$. Similarly, $|\Si_t|\le R_0$, and $u$ is uniformly Lipschitz continuous in $x$ with Lipschitz constant $R_0$.

{\bf Step 2.} We next prove \reff{Lpest1},  following the  arguments in \cite[Corollary 8.4]{MWZZ}. Note  that $\si$ does not depend on $z$, thus $\pa_\pi\si \td_x \Pi$ does not involve $\td_x Z$. Fix $p\ge 2$.  By the boundedness of $\pa_\pi \si$, it follows from standard arguments that there exists $\e_p\le \e_0$ such that \reff{Lpest1} holds true whenever $T\le \e_p$ and the terminal condition satisfies $|\td_x Y_T|\le R_0 |\td_x X_T|$. Now for general $T\le \e_0$, let $0=T_0<\cds<T_m = T$ be a partition of $[0, T]$ such that $\D T_i \le \e_p$. Note that $\td_x\Pi$ satisfies FBSDE \reff{tdFBSDEx}  on $[T_0, T_1]$ with initial condition $\D x$ and terminal condition $\td_x Y_{T_1} = \Si_{T_1}\td_x X_{T_1}$. Since $|\Si_{T_1}| \le R_0$, we have $\|\td_x \Pi \1_{[T_0, T_1]}\|_p \le C_p |\D x|$. In particular, $\|\td_x X_{T_1}\|_p \le C_p |\D x|$. Next, consider FBSDE \reff{tdFBSDEx}  on $[T_1, T_2]$ with initial condition $\td_x X_{T_1}$ and terminal condition $\td_x Y_{T_2} = \Si_{T_2} \td_x X_{T_2}$. Since $|\Si_{T_2}| \le R_0$, we have $\|\td_x \Pi \1_{[T_1, T_2]}\|_p \le C_p \|\td_x X_{T_1}\|_p \le C_p^2 |\D x|$. Repeat the arguments, we have $\|\td_x \Pi \1_{[T_{i-1}, T_{i}]}\|_p \le  C_p^i |\D x|$ for $i=1,\cds, m$. This implies  \reff{Lpest1} immediately for a larger $C_p$. We emphasize that $\e_0$ is independent of the $p$ here.

{\bf Step 3.} In this step we show that $\psi_i(x):= \td_x Y^{x, e_i}_0$ is locally uniformly continuous in $x$. Recall the $I_0$ in Assumption \ref{assum-0} (i). Fix $R_1\ge I_0$  and $x^1, x^2\in \dbR^{d_x}$ such that $|x^1|, |x^2|\le R_1$. Denote $\D x:= x^1-x^2$, $\D \Pi := \Pi^{x^1}-\Pi^{x^2}$, $\D^i \td_x \Pi = \td_x \Pi^{x^1,e_i}-\td_x \Pi^{x^2, e_i}$, $\D \Phi := \Phi(\Pi^{x^1}) -  \Phi(\Pi^{x^2})$.  By standard FBSDE estimates, we have
\bea
\label{Piest}
&&\!\!\!\!\!\!\!\!\!\!\!\!  \!\!\!\!\!\!\!\!\!\!\!\!  \|\Pi^{x^j}\|_2 \le C(|x^j|+I_0) \le CR_1,\q \|\td_x \Pi^{x^j,e_i}\|_4 \le C,~ j=1,2;\q \|\D \Pi\|_2 \le C|\D x|;\\
&&\!\!\!\! \!\!\!\!\!\!\!\! \!\!\!\!\!\!\!\!\!\!\!\!  \|\D^i \td_x \Pi\|_2^2 \le C\dbE\Big[ |\D \pa_x g|^2 |\td_x X^{x^1,e_i}_T|^2 + \Big(\sum_{\Phi = b, f}\int_0^T |\D\pa_\pi \Phi_s\td_x \Pi^{x^1, e_i}_s|ds\Big)^2 \nonumber\\
&&\qq\qq\qq  + \int_0^T \big(|\D \pa_x \si_s|^2 |\td_x X^{x^1, e_i}_s|^2 +|\D \pa_y \si_s|^2 |\td_x Y^{x^1, e_i}_s|^2\big) ds\Big].\nonumber
\eea
Note that, for $\Phi = b, \si, f, g$,
\beaa
&&\dbE\Big[|\D \pa_x \Phi_s|^2 |\td_x X^{x, e_i}_s|^2\Big]\le C\Big(\dbE\big[ |\D \pa_x \Phi|^4\big]\Big)^{1\over 2} \Big(\dbE\big[ |\td_x X^{x,e_i}_s|^4\big]\Big)^{1\over 2};\\
&&\dbE\Big[|\D \pa_y \Phi_s|^2 |\td_x Y^{x, e_i}_s|^2\Big]\le C\Big(\dbE\big[ |\D\pa_y \Phi |^4\big]\Big)^{1\over 2} \Big(\dbE\big[ |\td_x Y^{x,e_i}_s|^4\big]\Big)^{1\over 2};\\
&&\dbE\Big[\Big(\int_0^T|\D \pa_z \Phi_s| |\td_x Z^{x, e_i}_s|ds \Big)^2\Big]\le \dbE\Big[\int_0^T|\D \pa_z \Phi_s|^2ds \int_0^T |\td_x Z^{x, e_i}_s|^2ds \Big]\\
&&\q \le C\Big(\dbE\big[ \int_0^T |\D \pa_z \Phi|^4\big]ds\Big)^{1\over 2} \Big(\dbE\Big[ \big(\int_0^T |\td_x Z^{x, e_i}_s|^2ds\big)^2\Big]\Big)^{1\over 2}.
\eeaa
Then, by \reff{Lpest1} and denoting $\pi=(x, y, z)$, we have
\bea
\label{Dpsi}
|\psi_i(x^1) - \psi_i(x^2)|^4 \le \|\D^i \td_x \Pi\|_2^4\le C \dbE\Big[|\D \pa_x g|^4 + \sum_{\Phi = b, \si, f}\int_0^T|\D \pa_\pi \Phi_s|^4ds\Big].
\eea
Let $\rho^0_R$ denote the modulus of continuity function in \reff{rhoR1}. Recall that $\pa_\pi \Phi$ is bounded by $L_\Phi$. Then, for $\Phi = b, \si, f$ and for any $R>0$,
\beaa
|\D \pa_\pi \Phi| &\le& 2L_\Phi \1_{\{|\Pi^{x^1,e_i}|\vee |\Pi^{x^2,e_i}| > R\}} + 2 L_\Phi \1_{\{|\D \Pi|\ge \sqrt{|\D x|}\}} + \rho^0_R(\sqrt{|\D x|})\\
&\le&{C\over \sqrt{R}} \Big[\sqrt{|\Pi^{x^1,e_i}|} + \sqrt{|\Pi^{x^2,e_i}|}\Big] + {C\over |\D x|^{1\over 4}} \sqrt{|\D \Pi|} + \rho^0_R(\sqrt{|\D x|}),
\eeaa
and similar estimate holds for $\D \pa_x g$. Then, by \reff{Piest},
\beaa
|\psi_i(x^1) - \psi_i(x^2)|^4 &\le& C \dbE\Big[{1\over R^2}[ |X^{x^1,e_i}_T|^2 + |X^{x^2,e_i}_T|^2] + {1\over |\D x|} |\D X_T|^2 + |\rho^0_R(\sqrt{|\D x|})|^4  \\
&&+ \int_0^T\big[{1\over R^2}[ |\Pi^{x^1,e_i}_s|^2 + |\Pi^{x^2,e_i}_s|^2] + {1\over |\D x|} |\D \Pi_s|^2 + |\rho^0_R(\sqrt{|\D x|})|^4\big]ds\Big]\\
&\le& C\Big[{R_1^2\over R^2} + |\D x| + |\rho^0_R(\sqrt{|\D x|})|^4\Big] .
\eeaa
Then, for $|x^1|, |x^2|, I_0\le R_1$,
\bea
\label{rhoR3}
|\psi_i(x^1) - \psi_i(x^2)| \le \rho_{R_1}(|\D x|) := \inf_{R>0} C\Big[{\sqrt{R_1}\over \sqrt{R}} + |\D x|^{1\over 4} +\rho^0_R(\sqrt{|\D x|})\Big].
\eea
It is clear that $\lim_{|\D x|\to 0}  \rho_{R_1}(|\D x|)  = 0$. That is, $ \rho_{R_1}$ is a modulus of continuity function.

{\bf Step 4.} In this step we  show that $u$ is differentiable in $x$ with the representation \reff{paxrep}. Indeed, fix $i=1,\cds, d_x$, and for any $\e\in \dbR$, denote $x^{i,\e} := x + \e e_i$. It is clear that $\lim_{\e\to 0} \|\Pi^{x^{i,\e}}-\Pi^x\|_2 = 0$. Denote $\td^{i,\e}_x \Pi := {1\over \e}  (\Pi^{x^{i,\e}}-\Pi^x)$. We can formally write down the FBSDEs for $\td^{i,\e}_x \Pi$ and compare it with \reff{tdFBSDEx}. Following the second inequality in \reff{Dpsi} we can easily show that $\lim_{\e\to 0} \|\td^{i,\e}_x \Pi - \td_x \Pi^{x,e_i}\|_2 =0$. In particular, this implies that
\beaa
\lim_{\e\to 0} {1\over \e} \big[u_0(x^{i,\e}) - u_0(x)\big] = \lim_{\e\to 0} \td^{i,\e}_x Y_0 = \td_x Y^{x, e_i}_0 = \psi_i(x).
\eeaa
This implies $\pa_{x_i} u_0(x) = \psi_i(x)$. Then \reff{paxrep} holds, $|\pa_x u_0|\le R_0$, and $\pa_x u_0$ is locally uniformly continuous in $x$. Similarly, for any $t$, we have $|\pa_x u_t|\le R_0$, and $\pa_x u_t$ is locally uniformly continuous in $x$. Moreover, since
\beaa
\lim_{\e\to 0} \td^{i,\e}_x X_t = \td_x X^{x,e_i}_t,~  \td_x Y^{x,\e_i}_t = \lim_{\e\to 0} \td^{i,\e}_x Y_t  = \lim_{\e\to 0} {u_t(X^{x^{i,\e}}_t) - u_t(X^x_t)\over \e}= \pa_x u_t(X^x_t)  \td_x X^{x,e_i}_t.
\eeaa
This is the second equality in \reff{randomFK}, and we see that $\Si_t =  \pa_x u_t(X^x_t)$.

{\bf Step 5.} We now show that $\pa_x u$ is locally uniformly continuous in $t$ in the sense of \reff{rhoR2}. Fix $R_1\ge I_0$ and $x\in \dbR^{d_x}$ with $|x|\le R_1$, and denote $\td_x \Pi^i := \td_x \Pi^{x,e_i}$. By the above step, we have
\beaa
&&\!\!\!\!\! \big|\pa_{x_i} u_0(x) - \dbE[\pa_{x_i} u_t(x)]\big| = \Big|\dbE\Big[\td_x Y^{i}_0 - \td_x Y^{i}_t + \pa_{x} u_t(X^x_t) \td_x X^{i}_t - \pa_{x_i} u_t(x)\Big]\Big|\\
&&\!\!\!\!\!\le \dbE\Big[\int_0^t |\pa_\pi f_s \td_x \Pi^{i}_s|ds + \big|\pa_{x} u_t(X^x_t)- \pa_{x} u_t(x)\big| |\td_x X^{i}_t| + |\pa_{x} u_t(x)|\big| \td_x X^{i}_t - e_i\big|\Big].
\eeaa
Note that $ \|\Pi^x\|_2 \le CR_1$, $\|\td_x \Pi^i\|_2 \le C$, for the norm in \reff{Lpest1}. Then
\beaa
&&\dbE\Big[\int_0^t |\td_x \Pi^{i}_s|ds\Big] \le \Big( \dbE\Big[ t\int_0^t |\td_x \Pi^{i}_s|^2ds\Big] \Big)^{1\over 2} \le C\sqrt{t};\\
&&\dbE\Big[\Big|\int_0^t \pa_\pi \si_s(\Pi^x_s) \td_x \Pi^{i}_sdB_s\Big|\Big] \le C\Big( \dbE\Big[ \int_0^t [|\td_x X^{i}_s|^2 + |\td_x Y^{i}_s|^2]ds\Big] \Big)^{1\over 2} \le C\sqrt{t};\\
&&\dbE\Big[|X^x_t-x|^2\Big] \le C\dbE\Big[t\int_0^t |b_s(\Pi^x_s)|^2ds + \int_0^t |\si_s(X^x_s, Y^x_s)|^2ds\Big]\le CR_1^2 t.
\eeaa
Thus, for any $R>R_1$ and for $\rho_{R}$ in \reff{rhoR3}, we have
\beaa
&&\big|\pa_{x_i} u_0(x) - \dbE[\pa_{x_i} u_t(x)]\big|^2 \le Ct + C\dbE\Big[\big|\pa_{x_i} u_t(X^x_t)- \pa_{x_i} u_t(x)\big|^2\Big]\\
&&\le Ct +C\dbE\Big[\1_{\{|X^x_t|\ge R\}} + \1_{\{|X^x_t - x| \ge t^{1\over 4}\}} + \1_{\{I_t \ge R\}} + |\rho_{R}(t^{1\over 4})|^2\Big]\\
&&\le Ct +C\dbE\Big[{1\over R^2}|X^x_t|^2 + {1\over \sqrt{t}}|X^x_t - x|^2  + {1\over R^2} I_t^2+ |\rho_R(t^{1\over 4})|^2\Big] \\
&&\le Ct + C\Big[{R_1^2\over R^2} + R_1^2\sqrt{t} +|\rho_R(t^{1\over 4})|^2\Big],
\eeaa
where we used the obvious fact that $\dbE[I_t^2] \le I_0^2 \le R_1^2$. Then, for $|x|, I_0\le R_1$,
\beaa
\big|\pa_{x_i} u_0(x) - \dbE[\pa_{x_i} u_t(x)]\big| \le \bar\rho_{R_1}(t) := \inf_{R>R_1}  C\Big[\sqrt{t}+ {R_1\over R} + R_1t^{1\over 4} +\rho_R(t^{1\over 4})\Big].
\eeaa
Similarly, for any $t_1<t_2$ and $|x|, I_{t_1}\le R_1$,
\beaa
\big|\pa_{x_i} u_{t_1}(x) - \dbE_{t_1}[\pa_{x_i} u_{t_2}(x)]\big| \le \bar\rho_{R_1}(t_2-t_1).
\eeaa
It is clear that $\lim_{|\D t|\to 0}  \bar\rho_{R_1}(|\D t|)  = 0$, and thus $ \bar\rho_{R_1}$ is also modulus of continuity function.

{\bf Step 6.} It remains to prove the last equality in \reff{randomFK}, namely the representation of $Z^{x,1}$. We prove it in three cases.

{\bf Case 1.} We first assume  the coefficients depend on $B^0$ in a state dependent manner:
\bea
\label{bsifg-state}
\Phi_t(\pi) = \Phi'(t, \pi, B^0_t),~ \Phi = b, \si, f,\qq g(x) = g'(x, B^0_T),
\eea
where $\Phi', g'$ are deterministic functions and twice continuously differentiable in $(\pi, x')$  with bounded derivatives, where $x'$ refers to the last variable corresponding to $B^0_t$. Then, by the Markovian structure \reff{bsifg-state}, it follows from the standard FBSDE theory that $Y^x_t = u'(t, X^x_t, B^0_t)$ for a deterministic function $u'$.
It is clear that  $u_t(x, \o) := u'(t, x, B^0_t(\o))$. We claim that $u'\in \cC^2_b$. Then by the standard It\^o formula we see that
\beaa
Z^{x,1}_t = \pa_x u'(t, X^x_t, B^0_t)  \si^1(t, X^x_t, Y^x_t, B^0_t) = \pa_x u_t(X^x_t)  \si^1_t(X^x_t, Y^x_t).
\eeaa

We now show that $u'\in \cC^2_b$, following similar but much simpler arguments as in Section \ref{sect-bootstrap}. First, from the arguments in the previous steps, one can easily see that $\pa_x u', \pa_{x'} u'$ exist and are locally uniformly continuous in $(t, x, x')$. Moreover, by differentiating FBSDE \reff{tdFBSDEx} further, following similar arguments as in the previous steps, one can show that $u'$ is twice differentiable in $(x, x')$ such that the second order derivatives are also bounded and locally uniformly continuous.

Fix $\e>0$ small and denote
\beaa
\cY^\e_t := Y^x_t - u'(\e, x, 0), \q \cZ^{\e,1}_t := Z^{x,1}_t - \pa_x u'(\e, X^x_t, B^0_t) \si^1(t, X^x_t, Y^x_t, B^0_t),\\
 \cZ^{\e,0}_t := Z^{x,0}_t - \pa_x u'(\e, X^x_t, B^0_t) \si^0(t, X^x_t, Y^x_t, B^0_t) - \pa_{x'} u'(\e, X^x_t, B^0_t).
\eeaa
Then, for $t\in [0, \e]$, by applying the standard It\^{o} formula on $u'(\e,\cd,\cd)$ we have
\beaa
\cY^\e_t &=& u'(\e, X^x_\e, B^0_\e) - u'(\e, x, 0) + \int_t^\e f(s, \Pi^x_s, B^0_s) ds - \int_t^\e Z^x_s dB_s\\
& =& \int_t^\e \Big[ {1\over 2}\pa_{xx} u' : \si\si^\top + {1\over 2} \tr(\pa_{x'x'} u') + \pa_{x x'} u' : \si + \pa_x u' \cd b+ f\Big] ds - \int_t^\e \cZ^\e_s dB_s,
\eeaa
where the variables inside the derivatives of $u'$ is $(\e, X^x_s, B^0_s)$, while the variables inside $b, \si, f$ are $(s, \Pi^x_s, B^0_s)$. By the boundedness of the derivatives of $u'$, and the desired regularity of $\Phi'$, one can easily see that, for a constant $C$ independent of $\e$,
\beaa
\dbE\Big[\sup_{0\le t\le \e} |\cY^\e_t|^2 + \int_0^\e |\cZ^\e_t|^2dt\Big] \le C\e^2.
\eeaa
This implies that
\bea
\label{Z1est}
\dbE\Big[\int_0^\e |\cZ^{\e,1}_s|ds\Big] \le\Big( \dbE\Big[\big(\int_0^\e |\cZ^{\e,1}_s|ds\big)^2\Big]\Big)^{1\over 2}\le \Big( \dbE\Big[\e \int_0^\e |\cZ^{\e,1}_s|^2ds\Big]\Big)^{1\over 2}\le C \e^{3\over 2}.
\eea

Now for $n$ large and let $0=t_0<\cds<t_n=T$ be the uniform partition. Then, similar to \reff{Z1est}, we can show that
\beaa
\sum_{i=1}^{n}\dbE\Big[\int_{t_{i-1}}^{t_i} |Z^{x,1}_t - \pa_x u'(t_i, X^x_t, B^0_t) \si^1(t, X^x_t, Y^x_t, B^0_t)|dt\Big]\le C\sum_{i=1}^n ({T\over n})^{3\over 2} \le {C\over \sqrt{n}}.
\eeaa
Thus
\beaa
&&\dbE\Big[\int_0^T |Z^{x,1}_t - \pa_x u'(t, X^x_t, B^0_t) \si^1(t, X^x_t, Y^x_t, B^0_t)|dt\Big]\\
&&\le \sum_{i=1}^{n}\dbE\Big[\int_{t_{i-1}}^{t_i} | \pa_x u'(t_i, X^x_t, B^0_t)  - \pa_x u'(t, X^x_t, B^0_t)|| \si^1(t, X^x_t, Y^x_t, B^0_t)|dt\Big] + {C\over \sqrt{n}}.
\eeaa
Send $n\to \infty$, by the locally uniform continuity of $\pa_x u'$ in $t$ and applying the dominated convergence theorem, we obtain
\beaa
\dbE\Big[\int_0^T |Z^{x,1}_t - \pa_x u'(t, X^x_t, B^0_t) \si^1(t, X^x_t, Y^x_t, B^0_t)|dt\Big] = 0.
\eeaa
This implies the last equality in \reff{randomFK}.

{\bf Case 2.} We next relax \reff{bsifg-state} to the discrete path dependence on $B^0$:
\bea
\label{bsifg-staten}
\Phi_t(\pi) = \Phi'(t, \pi, B^0_{t_1: t_i}),~ t\in (t_i, t_{i+1}],~ \Phi = b, \si, f,\q\mbox{and}\q g(x) = g'(x, B^0_{t_1:t_n}),
\eea
where $0=t_0<\cds<t_n=T$ is a time partition, $B^0_{t_1:t_i} := (B^0_{t_1},\cds, B^0_{t_i})$, and $\Phi'$ are deterministic functions and twice continuously differentiable in all the spatial variables with bounded derivatives. Then, one can easily see that $u_t(x) = u^{'i}(t, x, B^0_{t_1:t_i})$, $t\in [t_i, t_{i+1}]$, for deterministic functions $u^{'i}: [t_i, t_{i+1}] \times \dbR^{d_x} \times (\dbR^{d_{01}})^{i+1}$, $i=0,\cds, n-1$, such that
\beaa
u^{'(n-1)}(t_n, x, x'_{1:n}) = g'(x, x'_{1:n}),\q u^{'i-1}(t_{i}, x, x'_{1:i})= u^{'i}(t_{i}, x, x'_{1:i}, x'_i),~i=n-1,\cds, 1,
\eeaa
where $x'_{1:i} = (x'_1, \cds, x'_i)$.
Then, applying the result in Case 1 backwardly in time, we have
\beaa
 Z^{x,1}_t = \pa_x u^{'i}  \si^1(t, X^x_t, Y^x_t, B^0_{t_1:t_{i}}) = \pa_x u_t(X^x_t) \si^1_t(X^x_t, Y^x_t),\q t\in (t_i, t_{i+1}).
\eeaa
This is the desired result.

\ms
{\bf Case 3.} We  now consider the general path dependence on $B^0$. For $n\ge 1$, let $0=t_0<\cds<t_n=T$ be the uniform partition. By standard arguments, one can easily construct $\Phi^n$ for $\Phi=b,\si, f, g$ such that
\begin{itemize}
\item For each $n$, $\Phi^n$ satisfies the requirements in Case 2.

\item $\Phi^n$ satisfies Assumption \ref{assum-0} uniformly, uniformly in $n$.

\item $\Phi^n$ converges to $\Phi$ in the sense that, for any $t, \pi$,
\bea
\label{PhiB0nconv}
\left.\ba{c}
\dis \lim_{n\to\infty} \dbE\Big[| \Phi^n_t(\pi) - \Phi_t(\pi)|^2+ | \pa_\pi\Phi^n_t(\pi) -\pa_\pi\Phi_t(\pi)|^2\Big]=0,\q \Phi=b,\si, f,\ms\\
\dis \lim_{n\to\infty} \dbE\Big[| g^n(x) - g(x)|^2 + | \pa_x g^n(x) - \pa_x g(x)|^2 \Big]=0.
\ea\right.
\eea
\end{itemize}

Let $\e_0$ be as in Step 1 and $T\le \e_0$. Since $\Phi^n$ satisfies Assumption \ref{assum-0} uniformly, then FBSDEs \reff{FBSDEx} and \reff{tdFBSDEx} with coefficients $\Phi^n$ are also well-posed, with solutions denoted as $(\Pi^{n,x}, \td_x \Pi^n)$ and the decoupling field denoted as $u^n$. By Case 2 we have
\bea
\label{Zn1}
Z^{n,x,1}_t = \pa_x u^n_t(X^{n,x}_t) \si^1_t(X^{n,x}_t, Y^{n,x}_t).
\eea
 By \reff{PhiB0nconv}  we have the convergence:
\bea
\label{FBSDEnconv}
\lim_{n\to\infty} \Big[\|\Pi^{n,x} - \Pi^x\|_2 + \|\td_x\Pi^n - \td_x\Pi\|_2\Big]=0.
\eea
In particular, this implies that $\pa_{x_i} u^n_0(x) = \td Y^{n,x, e_i}_0 \to  \td Y^{x, e_i}_0 = \pa_{x_i} u_0(x)$. Similarly $\pa_x u^n_t \to \pa_x u_t$. Moreover, again since $\Phi^n$ satisfies Assumption \ref{assum-0} uniformly, then $\pa_x u^n$ are uniformly bounded, and are locally uniformly continuous with the modulus of continuity function $\rho_R$ in \reff{rhoR2} independent of $n$. Then, for any $R>0$ and $\e>0$,
\beaa
&&\big|\pa_x u^n_t(X^{n,x}_t) - \pa_x u_t(X^{x}_t)\big|\le \big|\pa_x u^n_t(X^{n,x}_t) - \pa_x u^n_t(X^{x}_t)\big| + \big|\pa_x u^n_t(X^{x}_t) - \pa_x u_t(X^{x}_t)\big|\\
&&\le C\big[\1_{\{|X^{n,x}_t|\ge R\}} + \1_{\{|X^{x}_t|\ge R\}} + \1_{\{|X^{n,x}_t-X^x_t|\ge \e\}}\big] + \rho_R(\e) +  \big|\pa_x u^n_t(X^{x}_t) - \pa_x u_t(X^{x}_t)\big|.
\eeaa
Then
\beaa
&&\!\!\!\!\! \dbE\Big[\int_0^T \big|\pa_x u^n_t(X^{n,x}_t) \si^1_t(X^{n,x}_t, Y^{n,x}_t)-\pa_x u_t(X^{x}_t) \si^1_t(X^{x}_t, Y^{x}_t)\big|dt\Big]\\
&&\!\!\!\!\!\le C\dbE\Big[\int_0^T \big|\si^1_t(X^{n,x}_t, Y^{n,x}_t) -\si^1_t(X^{x}_t, Y^{x}_t)\big|dt\\
&&\!\! +\int_0^T \big[ {|X^{n,x}_t|+|X^x_t|\over R} + {|X^{n,x}_t-X^x_t|\over \e} + \rho_R(\e) + \big|\pa_x u^n_t(X^{x}_t) - \pa_x u_t(X^{x}_t)\big|\big] | \si^1_t(X^{x}_t, Y^{x}_t)|dt\Big].
\eeaa
This implies that
\beaa
&&\limsup_{n\to\infty} \dbE\Big[\int_0^T \big|\pa_x u^n_t(X^{n,x}_t) \si^1_t(X^{n,x}_t, Y^{n,x}_t)-\pa_x u_t(X^{x}_t) \si^1_t(X^{x}_t, Y^{x}_t)\big|dt\Big]\\
&&\le C \dbE\Big[\int_0^T \big[ {|X^x_t|\over R} +  \rho_R(\e)\big]dt\Big] \le C\big[{I_0+|x|\over R} + \rho_R(\e)\big].
\eeaa
By first sending $\e\to 0$ and then $R\to \infty$, we obtain
\beaa
\lim_{n\to\infty} \dbE\Big[\int_0^T \big|\pa_x u^n_t(X^{n,x}_t) \si^1_t(X^{n,x}_t, Y^{n,x}_t)-\pa_x u_t(X^{x}_t) \si^1_t(X^{x}_t, Y^{x}_t)\big|dt\Big]=0.
\eeaa
Moreover, since $\dbE\big[\int_0^T |Z^{n,x,1}_t-Z^{x,1}_t|^2dt\big]\to 0$, then by \reff{Zn1} we obtain the claimed representation:
 $Z^{x,1}_t = \pa_x u_t(X^{x}_t) \si^1_t(X^{x}_t, Y^{x}_t)$.
 \qed

\end{document}